\theoremstyle{plain}
\newtheorem{thm}{Theorem}[section]
\newtheorem{lem}[thm]{Lemma}
\newtheorem{prop}[thm]{Proposition}
\newtheorem{claim}[thm]{Claim}
\newtheorem{qu}[thm]{Problem}
\theoremstyle{definition}
\newtheorem{defn}[thm]{Definition}
\theoremstyle{remark}
\newtheorem*{rem}{Remark}
\newcommand{\nc}{\newcommand}
\nc{\dmo}{\DeclareMathOperator}
\DeclareMathOperator{\Diff}{Diff}
\DeclareMathOperator{\interior}{Int}
\DeclareMathOperator{\Mod}{Mod}
\DeclareMathOperator{\Homeo}{Homeo}
\nc{\para}[1]{\medskip\noindent\textbf{#1.}}
\title{Non-realizability of the Torelli group as area-preserving homeomorphisms}
\address{\newline Department of Mathematics   \newline California Institute of Technology   \newline Pasadena, CA 91125,  USA}
\email{chenlei@caltech.edu, markovic@caltech.edu}
\author{Lei Chen and Vladimir Markovic}
\begin{document}
\maketitle
\begin{abstract}
Nielsen realization problem for the mapping class group $\Mod(S_g)$ asks whether the natural projection $p_g: \Homeo_+(S_g)\to \Mod(S_g)$ has a section. While all the previous results use torsion elements in an essential way, in this paper, we focus on the much more difficult problem of realization of torsion-free subgroups of $\Mod(S_g)$. The main result of this paper is that the Torelli group has no realization inside the area-preserving homeomorphisms. \end{abstract}

\section{Introduction}
Let $S_g$ be a surface of genus $g$. Let $p_g: \Homeo_+(S_g)\to \Mod(S_g)$ be the natural projection where $\Homeo_+(S_g)$ denotes the group of orientation-preserving homeomorphisms of $S_g$ and $\Mod(S_g):=\pi_0(\Homeo_+(S_g))$. In 2007, Markovic \cite{Mar} answered a well-known question of Thurston that $p_g$ has no section for $g\ge 5$. The proof in \cite{Mar} uses both torsions and the braid relations in an essential way, which both disappear in most finite index subgroups of $\Mod(S_g)$. Motivated by this, Farb \cite[Question 6.6]{Farb} asked the following question:
\begin{qu}[Sections over finite index subgroups]
Does the natural projection $p_g$ have a section over every finite index subgroup of $\Mod(S_g)$, or not?
\end{qu}
This problem presents two kinds of difficulties: the lack of understanding of finite index subgroups of $\Mod(S_g)$ and the lack of understanding of relations in $\Homeo_+(S_g)$. To illustrate the latter, we state the following problem (\cite{MT}[Problem 1.2]).
\begin{qu}
Give an example of a finitely-generated, torsion free group $\Gamma$, and a surface S, such that $\Gamma$ is not isomorphic to a subgroup of $\Homeo_+(S)$. 
\end{qu}
Motivated by the above problems and difficulties, we study the section problem for the Torelli group ${\mathcal I}(S_g)$, which is torsion free (e.g., \cite[Theorem 6.8]{FM}). Recall that ${\mathcal I}(S_g)$ is the subgroup of $\Mod(S_g)$ that acts trivially on $H_1(S_g;\mathbb{Z})$. For any area form on $S_g$, let $\Homeo_+^a(S_g)$ be the group of orientation-preserving, area-preserving homeomorphisms of $S_g$. In this paper, we prove the following:
\begin{thm}\label{main}
The Torelli group cannot be realized as a group of area-preserving homeomorphisms on $S_g$ for $g\ge 6$. In other words, the natural projection $p_g^a: \Homeo_+^a(S_g)\to \Mod(S_g)$ has no section over  ${\mathcal I}(S_g)$.
\end{thm}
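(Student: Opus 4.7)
The plan is to argue by contradiction: assume a section $s : \mathcal{I}(S_g) \to \Homeo_+^a(S_g)$ of $p_g^a$ exists, and extract an invariant topological structure on $S_g$ whose existence contradicts the algebraic structure of the Torelli group. The strategy has three layers: first, control $s$ on individual separating Dehn twists; second, promote this control to an invariant multicurve using commutation in $\mathcal{I}(S_g)$; and third, restrict to a subsurface to set up either an induction on genus or a direct numerical contradiction.

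The first step is the dynamical heart of the argument. For a separating simple closed curve $c$, I would analyze $s(T_c)$. Since $T_c$ is represented by a homeomorphism supported in an annular neighborhood of $c$, and $s(T_c)$ is required to be area-preserving with $p_g^a(s(T_c)) = T_c$, the idea is to combine a Franks / Brouwer-type fixed point theorem with the area hypothesis to show that $s(T_c)$ preserves setwise an essential annulus $A_c$ isotopic to a neighborhood of $c$, together with a well-defined non-zero mean rotation number across $A_c$. The area hypothesis is crucial here: without it, $s(T_c)$ could a priori exhibit wild invariant sets rather than a clean annular invariant; area-preservation supplies the recurrence needed to force the rigidity.

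Next, I would exploit that disjoint separating twists commute in $\mathcal{I}(S_g)$, so their images $s(T_{c_i})$ and $s(T_{c_j})$ commute in $\Homeo_+^a(S_g)$. Commutation together with the rigidity of the annuli $A_{c_i}$ produced in the previous step should force these annuli to be realizable simultaneously as disjoint annular neighborhoods of an invariant topological multicurve $\widetilde C \subset S_g$ isotopic to $\{c_1,\ldots,c_k\}$. For $g \geq 6$ one can arrange that $\widetilde C$ cuts off a complementary subsurface $\Sigma$ of genus at least $2$ whose Torelli subgroup embeds in $\mathcal{I}(S_g)$. Restricting $s$ to the subgroup supported on $\Sigma$ then yields an area-preserving realization of $\mathcal{I}(\Sigma)$, either feeding an induction on genus or, in a low-genus base case, producing an explicit numerical contradiction by computing rotation numbers or Calabi-type invariants of bounding pair maps and playing them against Johnson-type relations in the Torelli group.

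The main obstacle is the first step: pinning down the invariant set of $s(T_c)$ precisely enough to be an essential annulus, and doing so compatibly with commutation so that step two can assemble the invariant multicurve $\widetilde C$. This is where the area-preserving hypothesis is genuinely used and where the bulk of the analytic work should lie; it is also why the theorem is restricted to $\Homeo_+^a(S_g)$ rather than the full group $\Homeo_+(S_g)$. The threshold $g \geq 6$ is presumably dictated by the need to find disjoint separating configurations whose complementary components contain enough genus to carry out the reduction and to invoke Johnson-style generation of the Torelli subgroup of $\Sigma$.
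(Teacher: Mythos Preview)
Your overall architecture---assume a section, extract an invariant annular object for a separating twist, and play a rotation-number computation against a Johnson-type relation---matches the paper's in spirit. But the concrete mechanism you propose diverges from the actual argument in several essential ways, and as written your plan has genuine gaps.

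First, there is no multicurve and no induction on genus. The paper works with a \emph{single} separating curve $c$ cutting off a genus-$4$ piece $S_L$, and the contradiction is obtained entirely from the dynamics of $f=\mathcal{E}(T_c^{-6})$ together with the subgroup $\mathcal{LI}(c)\subset\mathcal{I}(S_g)$ generated by simple bounding-pair maps supported in $S_L$. Your step two (assembling disjoint invariant annuli into a multicurve via commutation) and step three (restricting to a complementary piece and inducting) are not used and would not obviously work: you would need to know that the invariant annuli for disjoint twists can be realized \emph{disjointly}, and nothing in your outline supplies that.

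Second, and more seriously, your step one underestimates the difficulty of extracting the invariant object. You hope for a clean invariant open annulus $A_c$; the paper does not obtain anything so nice. Instead it invokes Markovic's \emph{minimal decomposition} theory (extended here to pseudo-Anosov elements of the Torelli group) to produce invariant open subsurfaces $\mathbf{L}(c),\mathbf{R}(c)$ homotopic to $S_L,S_R$, sets $\mathbf{B}=S_g\setminus(\mathbf{L}(c)\cup\mathbf{R}(c))$, and then works inside a ``characteristic annulus'' $A\supset\mathbf{B}$. Within $A$, Handel's theorem guarantees that every rotation number $r\in(0,1)$ is realized by $f$. For an \emph{irrational} $r$ one takes a component $E$ of the level set $E_r$, proves $\overline{E}$ is a separator, and forms its annular completion $K(\overline{E})$. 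A Poincar\'e--Birkhoff argument (Franks--Le Calvez) combined with a separator-ordering trick shows the rotation interval of $K(\overline{E})$ is the singleton $\{r\}$. This is where area-preservation is genuinely used.

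Third, the endgame is not a Calabi computation but a one-dimensional reduction via \emph{prime ends}. By Matsumoto's theorem the prime-end rotation number of $f$ on the frontier of $K(\overline{E})$ equals $r$, irrational. The group $\mathcal{LI}(c)$ acts on this circle of prime ends and centralizes $f$ there. But Proposition~\ref{Johnson} writes $T_c^{-6}$ as a product of commutators in $\mathcal{LI}(c)$, while an irrational rotation of $S^1$ cannot be a product of commutators in its centralizer (the centralizer is abelian after semiconjugating to a rigid rotation). That is the contradiction.

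So the pieces you are missing are: minimal decomposition theory to manufacture the invariant subsurfaces; the separator/annular-continuum machinery and the Poincar\'e--Birkhoff/Handel input to pin down the rotation interval; and the prime-ends reduction that converts the problem to circle dynamics. Your ``Franks/Brouwer plus area'' sketch does not by itself produce an invariant annulus, and the multicurve/induction scaffolding is a red herring.
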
 
The property we use about the Torelli group is that it is generated by simple bounding pair maps \cite{Johnson}. To extend the method of this paper to study the Nielsen realization problem for all finite index subgroups of $\Mod(S_g)$, we need to study the subgroup generated by powers of simple bounding pair maps or powers of simple Dehn twists. In most cases, this subgroup is an infinite index subgroup of $\Mod(S_g)$ called the power group; see \cite{Funar} for discussions of power groups.


\para{Previous work}
Nielsen posed the realization problem for finite subgroups of $\Mod(S_g)$ in 1943 and Kerckhoff \cite{Kerk} showed that a lift always exists for finite subgroups of $\Mod(S_g)$. The first result on Nielsen realization problem for the whole mapping class group is a theorem of Morita \cite{Mor} that there is no section for the projection $\Diff^2_+(S_g)\to \Mod(S_g)$ when $g\ge 18$. Then Markovic \cite{Mar} (further extended by Markovic--Saric \cite{MS} on the genus bound; see also \cite{LeCalvez} for simplification of the proof and \cite{lei} for the proof in the braid group case) showed that $p_g$ does not have a section for $g\ge 2$. Franks--Handel \cite{FH}, Bestvina--Church--Suoto \cite{BCS} and Salter--Tshishiku \cite{ST} also obtained non-realization theorems for $C^1$ diffeomorphisms. Notice that MoritAs result also extends to all finite index subgroups of $\Mod(S_g)$, but all the other results that we mention above do not extend to the case of finite index subgroups. We refer the readers to the survey paper by Mann--Tshishiku \cite{MT} for more history and previous ideas. 

We remark that the Nielsen realization problem for the Torelli group is also connected with another well-known MoritAs conjecture on the non-vanishing of the even MMM classes. Morita showed that most MMM classes vanish on $\Diff^2_+(S_g)$ and he conjectured that the even MMM classes do not vanish on the Torelli group \cite[Conjecture 3.4]{Moritasurvey}. Therefore, if one can prove MoritAs conjecture, one also gives a proof that the Torelli group cannot be realized in $\Diff^2_+(S_g)$.

\para{Ingredients of the paper}
The proof in this paper is essentially a local argument by considering the action on a sub-annulus. We use the following key ingredients:
\begin{enumerate}
\item Markovic's theory on minimal decomposition, extending it to the pseudo-Anosov case;
\item Poincar\'e-Birkhoff's theorem on existence of periodic orbits;
\item Handel's theorem on the closeness of the rotation interval.
\item Matsumoto's theorem about prime ends rotation numbers.
\end{enumerate}
Let $c$ be a separating simple closed curve and $T_c$ be the Dehn twist about $c$. The goal of the argument is to find an invariant subsurface with the frontier homotopic to $c$ such that the action of $T_c$ on the frontier has an irrational rotation number. Then by studying the action on the frontier, the fact that $T_c$ has an irrational rotation number is incompatible with the group structure. The main work of the paper is to obtain the invariant subsurface. This is done by using Poincar\'e-Birkhoff's theorem and Handel's theorem. Matsumoto's theorem is used to reduce the problem into a one-dimensional problem.

\para{Acknowledgements}  We in debt to the anonymous referee for many useful suggestions, extensive comments in particular for schooling us about rotation numbers. We also thank Danny Calegari for useful discussions. 

\vskip 2cm

\section{Rotation number of annulus homeomorphisms}
In this section, we discuss the properties of rotation numbers on annuli. 

\subsection{Rotation number of an area-preserving homeomorphism of an annulus}
Firstly, we define the rotation number for geometric annuli. Let 
\[
N=N(r)=\{w \in \mathbb{C}:\frac{1}{r}< |w|<r \}
\]
be the geometric annulus in the complex plane $\mathbb{C}$. Denote the geometric strip in $\mathbb{C}$ by 
\[
P=P(r)=\{x+iy=z\in \mathbb{C}:|y|<\frac{\log r}{2\pi}\}.
\] 
The map $\pi(z)=e^{2\pi iz}$ is a holomorphic covering map  $\pi: P\to N$. The deck transformation on $P$ is $T(x,y)=(x+1,y)$. 

Denote by $p_1:P\to \mathbb{R}$ the projection to the $x$-coordinate, and by $\Homeo_+(N)$ the group of homeomorphisms of $N$ that preserves orientation and the two ends.  Fix $f\in \Homeo_+(N)$, and $x\in N$, and let $\widetilde{x}\in P$ and $\widetilde{f}\in \Homeo_+(P)$ denote lifts of $x$ and $f$ respectively. We define the translation number of the lift $\widetilde{f}$ at $\widetilde{x}$ by
\begin{equation}\label{rot-0}
\rho(\widetilde{f},\widetilde{x},P)=\lim_{n\to \infty} (p_1(\widetilde{f}^n(\widetilde{x}))-p_1(\widetilde{x}))/n.
\end{equation}
The rotation number of $f$ at $x$ is then defined as
\begin{equation}\label{rot}
\rho(f,x,N)=\rho(\widetilde{f},\widetilde{x},P)  \,\,\,\,\,\, \,   \,\,\,\,\,\, \, (\text{mod 1}).
\end{equation}

The rotation number is not defined everywhere (see, e.g., \cite{Franks} for more background on rotation numbers). The closed annulus  $N_c$ is 
\[
N_c=\{\omega \in \mathbb{C}:\frac{1}{r}\le |\omega|\le r \},
\]
For $f\in \Homeo_+(N_c)$, the rotation and translation numbers are defined analogously.

Let $A$ be an open annulus embedded in a Riemann surface (in particular this endows $A$ with the complex structure). By the Riemann mapping theorem, there is a unique $N(r)=N$ and a conformal map $u_A: A\to N$. For any $f\in \Homeo_+(A)$ (the group of end-preserving homeomorphisms), we 
define the rotation number of $f$ on $A$ by
\[\rho(f,x,A):=\rho(g,u_A(x), N),\]  where $g=u_A\circ f\circ u_A^{-1}$.

We have the following theorems of Poincar\'e-Birkhoff and Handel about rotation numbers \cite{Handel} (See also Franks \cite{Franks}). 
\begin{thm}[Properties of rotation numbers]\label{rotationproperty}
If $f: N_c \to N_c$ is an orientation preserving, boundary component preserving, area-preserving homeomorphism and $\widetilde{f}: P_c \to P_c$ is any lift, then:
\begin{itemize}
\item (Handel) The translation set \[
R(\widetilde{f})=\bigcup_{\widetilde{x} \in P_c} \rho(\widetilde{f},\widetilde{x},P_c)\] is a closed interval.
\item (Poincar\'e-Birkhoff) If $r\in R(\widetilde{f})$ is rational, then there exists a periodic orbit of $f$ realizing the rotation number $r$ mod $1$. 
\end{itemize}
\end{thm}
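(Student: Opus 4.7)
The plan is to treat the two bullets as applications of classical results due to Handel and Franks (the latter in the tradition of Poincaré--Birkhoff), essentially outlining how they are proved rather than giving a new argument, since the theorem is attributed to them in the statement.

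\textbf{Convexity and closedness of $R(\widetilde{f})$.} I would split the first bullet into (i) $R(\widetilde{f})$ is convex and (ii) $R(\widetilde{f})$ is closed. For convexity, given $\alpha<\gamma<\beta$ in $R(\widetilde{f})$, the idea is to concatenate long segments of an orbit with translation $\approx \alpha$ and an orbit with translation $\approx \beta$ to manufacture a periodic $\varepsilon$-pseudo-orbit in $N_c$ of average displacement $\gamma$, and then invoke a shadowing argument to upgrade it to a genuine orbit. Area-preservation enters crucially via Poincaré recurrence, which guarantees that the pieces of the two orbits return arbitrarily close to each other so the concatenation is possible; this is the essence of Franks's and Handel's arguments for closed annuli. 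For closedness, take $\gamma_n \to \gamma$ with $\gamma_n = \rho(\widetilde{f},\widetilde{x}_n,P_c)$, form the empirical measures $\mu_{n,N} = \tfrac{1}{N}\sum_{k=0}^{N-1}\delta_{f^k x_n}$ for a well-chosen $N=N(n)$, extract a weak-$*$ limit $\mu$, and apply Birkhoff's ergodic theorem to the displacement cocycle on $P_c$ to locate a point $\widetilde{x}$ with $\rho(\widetilde{f},\widetilde{x},P_c)=\gamma$.

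\textbf{Rational realization.} Writing $r=p/q$ in lowest terms, I would reduce the second bullet to a fixed-point statement by replacing $(\widetilde{f},r)$ with $(T^{-p}\widetilde{f}^q,0)$: producing a periodic orbit of $f$ of rotation number $r$ mod $1$ is equivalent to producing a fixed point of $\widetilde{g}:=T^{-p}\widetilde{f}^q$ in $P_c$. Assuming $0\in R(\widetilde{g})$ and arguing by contradiction, suppose $\widetilde{g}$ is fixed-point free. Brouwer's translation arc theorem then supplies a topological disk $D\subset P_c$ disjoint from all its $\widetilde{g}$-iterates; descending to $N_c$ and using the $T$-invariance of the area together with Poincaré recurrence yields a contradiction. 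This is the form of Poincaré--Birkhoff established by Franks for area-preserving maps and is what the statement is invoking.

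\textbf{Main obstacle.} The genuinely subtle step is the interior convexity of $R(\widetilde{f})$. The classical Poincaré--Birkhoff only detects rationals strictly between the boundary rotation numbers, and without area-preservation the translation set of an annulus map need not be an interval at all. Handel's shadowing construction for arbitrary interior rotation numbers is where the real work sits, and in writing the paper I would expect to simply cite \cite{Handel} (and \cite{Franks}) here rather than reproduce the proof, since the rest of the paper only uses these properties as a black box to extract periodic orbits and irrational rotation numbers on invariant annuli.
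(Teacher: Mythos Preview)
Your final paragraph is exactly right: the paper does not prove this theorem at all. It is stated with attribution and the citations \cite{Handel} and \cite{Franks}, and then used as a black box (in the proof of Lemma~\ref{gap} to get every $0<r<1$ as a rotation number, and via Theorem~\ref{PB} for periodic orbits). So there is no ``paper's own proof'' to compare your sketch against; your instinct to cite rather than reprove matches what the paper actually does.
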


\subsection{Separators and its property}
We let $A$ continue to denote an open annulus embedded in a Riemann surface. Then $A$ has two ends and we choose  one of them to be the left end and the other one to be the right end. We call a  subset $X\subset \interior(A)$ \emph{separating} (or essential) if every 
arc $\gamma \subset A$ which connects the two ends of $A$  must intersect $X$. 

\begin{defn}[Separator]
We call a subset $M\subset A$ a \emph{separator} if $M$ is compact, connected and separating.
\end{defn}
The complement of $M$ in $A$ is a disjoint union of open sets. We have the following lemma.
\begin{lem}\label{separator}
Let $M$ be a separator. Then there are exactly two connected components $A_L(M)$ and $A_R(M)$ of $A-M$ which are open annuli homotopic to $A$ and with the property that $A_L(M)$ contains the left end of $A$ and $A_R(M)$ contains the right end of $A$. All other components of $A-M$ are simply connected.
\end{lem}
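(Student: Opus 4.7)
The strategy is to embed $A$ into $S^2$ by capping off the ends and to apply Alexander duality to the compact connected set $M$. Fix a homeomorphism $A \cong S^1 \times (0,1)$, form the closed annulus $\bar A := S^1 \times [0,1]$, and attach two closed disks $D_L, D_R$ along $S^1 \times \{0\}$ and $S^1 \times \{1\}$ to obtain $S^2$. Since $M$ is a compact connected subset of $S^2$, Alexander duality gives $\widetilde{H}_1(S^2 - M) \cong \widetilde{H}^0(M) = 0$, so every component of $S^2 - M$ has trivial $H_1$; being an open subsurface of $S^2$ (whose fundamental group is free), each such component is therefore a topological open disk.

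The main work is to match the components of $A - M$ with those of $S^2 - M$. By compactness of $M$, choose $0 < a < b < 1$ with $M \subset S^1 \times [a,b]$ and set $V_L := S^1 \times (0,a)$, $V_R := S^1 \times (b,1)$; let $A_L, A_R$ be the components of $A - M$ containing $V_L, V_R$. Essentialness of $M$ forces $A_L \neq A_R$, since otherwise a path inside the common component together with vertical segments in $V_L \cup V_R$ would join the two ends of $A$ while avoiding $M$. I would then show that $C_L := D_L \cup A_L$ is open (neighborhoods of $\partial D_L$ lie in $D_L \cup V_L$), connected, and is a full component of $S^2 - M$: a path in $S^2 - M$ based in $D_L$ must remain in $D_L \cup A_L$, because exiting $A_L$ into another component of $A - M$ is blocked by $M$, while reaching $D_R$ through $S^1 \times \{1\}$ would force $\overline{A_L}$ to meet $V_R$ and hence $A_L = A_R$. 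The same argument handles $C_R$, and for any third component $X$ of $A - M$ the enveloping component of $S^2 - M$ cannot contain $D_L$ or $D_R$ (by the same accumulation obstruction), so it is contained in $A$ and coincides with $X$.

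Assembling these pieces, $C_L$ is an open disk containing the closed disk $D_L$ compactly, so $A_L = C_L - D_L$ is the complement of a closed topological disk in an open disk, hence an open annulus by Schoenflies; symmetrically for $A_R$. Since $V_L \subset A_L \subset A$ and the outer inclusion $V_L \hookrightarrow A$ is already a homotopy equivalence, both intermediate inclusions must induce isomorphisms on $\pi_1$, and in particular $A_L \hookrightarrow A$ is a homotopy equivalence. All remaining components of $A - M$ coincide with their enveloping components in $S^2 - M$ and are therefore simply connected. \textbf{The main obstacle} is not the duality input but the case analysis needed to rule out two bad scenarios: (i) a component of $A - M$ whose closure meets both boundary circles of $\bar A$, and (ii) distinct components of $A - M$ merging into a single component of $S^2 - M$ through one of the capping disks. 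Both are excluded using essentialness of $M$ together with the buffer collars $V_L, V_R$.
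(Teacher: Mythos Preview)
Your argument is correct, and it follows the same overall scheme as the paper: compactify $A$ to $S^2$, use that the complement of a compact connected set in $S^2$ has only simply connected (hence open-disk) components, and read off the statement from this. The difference is in \emph{how} you compactify. The paper adds a single point $p_L$, $p_R$ at each end (the end/Freudenthal compactification), whereas you glue on closed disks $D_L$, $D_R$. With the one-point compactification one has literally
\[
A-M=(S^2-M)\setminus\{p_L,p_R\},
\]
so the components of $A-M$ are exactly the components of $S^2-M$ with at most one of the two points deleted. Essentialness of $M$ says $p_L$ and $p_R$ lie in distinct components $\Omega_L,\Omega_R$; removing a point from an open disk gives an annulus, and every other component is an untouched open disk. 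That is the entire proof---no matching step, no buffer collars, no accumulation argument.

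Your disk-cap version forces you to work through exactly the ``main obstacle'' you flag: ruling out that distinct components of $A-M$ merge through a cap, and that a single component touches both boundary circles. Your handling of these cases is fine, and the explicit collars $V_L,V_R$ do give a clean reason why $A_L\hookrightarrow A$ is a homotopy equivalence; but all of this disappears for free if you compactify by points instead of disks. One small bonus of your write-up over the paper's: you actually justify why components of $S^2-M$ are simply connected (Alexander duality plus freeness of $\pi_1$ of open surfaces), which the paper simply asserts.
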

\begin{proof}
We compactify the annulus $A$ by adding  points $p_L$ and $p_R$ to the corresponding ends of $A$. The compactifications is a two sphere $S^2$. Moreover, $M$ is a compact and connected subset of $S^2 -\{p_L,p_R\}$.

Now, we observe that every component of  $S^2-M$ is  simply connected. Denote by $\Omega_L$ and $\Omega_R$ the  connected components of $S^2-M$  
containing $p_L$ and $p_R$  respectively. Since $M$ is separating we conclude that these are two different components. We define $A_L(M)=\Omega_L-p_L$ and $A_R(M)=\Omega_R-p_R$. It is easy to verify that these are required annuli.
\end{proof}

We now prove another property of a separator. Let $\pi: \widetilde{A}\to A$ be the universal cover.

\begin{prop}\label{sepprop}
Let $M \subset A$ be  a separator. Then $\pi^{-1}(M)$ is connected.
\end{prop}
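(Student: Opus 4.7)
The plan is to argue by contradiction: suppose $\widetilde{M} := \pi^{-1}(M)$ admits a nontrivial clopen decomposition $\widetilde{M} = C_1 \sqcup C_2$, with both parts nonempty and closed in $\widetilde{A}$. The goal is to derive a disconnection of $M$.

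The key lever is the deck-group action. If $C_1$ and $C_2$ happen to be $T$-invariant, then the projections $\pi(C_1)$ and $\pi(C_2)$ are disjoint closed subsets of $M$ whose union is $M$. Disjointness holds because two points in the same $\pi$-fiber differ by a power of $T$, and $T$-invariance of $C_1$ then forces both points to lie in $C_1$; closedness follows from the fact that $\pi$, being the quotient by the free proper $\mathbb{Z}$-action of $T$ on the Hausdorff space $\widetilde{A}$, is a closed map. Such a decomposition contradicts the connectedness of $M$.

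It therefore suffices to promote an arbitrary clopen partition of $\widetilde{M}$ to a $T$-invariant one. The deck action permutes the connected components of $\widetilde{M}$, partitioning them into $T$-orbits. If there is more than one $T$-orbit, any proper subfamily of orbits provides a $T$-invariant clopen subset, and we are done. Otherwise $T$ acts transitively on the components; if some minimal positive power $T^k$ stabilizes each component, we descend to the intermediate cover $A_k := \widetilde{A}/T^k$, which is still an annulus, and apply the projection argument there (where the components descend to a nontrivial clopen decomposition of $q_k^{-1}(M)$).

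The remaining case, in which no positive power of $T$ stabilizes any single component $\widetilde{M}_0$, forces $\pi|_{\widetilde{M}_0}: \widetilde{M}_0 \to M$ to be a continuous bijection: it is injective because any coincidence $\pi(\widetilde{p}) = \pi(\widetilde{q})$ with $\widetilde{p}, \widetilde{q} \in \widetilde{M}_0$ would give $\widetilde{q} = T^n \widetilde{p} \in T^n \widetilde{M}_0$, contradicting that $T^n \widetilde{M}_0 \neq \widetilde{M}_0$ for $n \neq 0$; surjectivity follows from transitivity. By closedness of $\pi$, this is a homeomorphism, so $M$ lifts to a compact connected subset $\widetilde{M}_0 \subset \widetilde{A}$ via the section $s := (\pi|_{\widetilde{M}_0})^{-1}$. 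Since $M$ is bounded away from the ends of $A$, the translates $T^n \widetilde{M}_0$ all lie in some horizontal substrip $\mathbb{R} \times [-h', h']$ with $h' < h$, so the regions $\mathbb{R} \times (h', h)$ and $\mathbb{R} \times (-h, -h')$ — each $T$-invariant and connected — lie entirely in $\widetilde{A} \setminus \widetilde{M}$. Constructing a proper arc in $\widetilde{A} \setminus \widetilde{M}$ from one of these strips to the other (which is possible after routing around the relevant finite block of translates using the boundedness of each $\widetilde{M}_n$) and projecting to $A$ produces an arc in $A \setminus M$ connecting the two ends of $A$, contradicting that $M$ is a separator. The main obstacle will be this last geometric step — guaranteeing that the constructed arc can be made to avoid all the translated lifts of $M$ simultaneously, not merely $\widetilde{M}_0$ itself.
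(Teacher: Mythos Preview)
Your overall strategy (analyze how the deck transformation $T$ permutes pieces of $\widetilde M$ and rule out each possibility) is natural, but there are real gaps beyond the geometric step you flag at the end. The most damaging is the claim that $\pi$ is a closed map: it is not. Already for $\mathbb R\to S^1$, the set $\{n+\tfrac{1}{n+1}:n\ge 1\}$ is closed but its image accumulates on $1$ without containing it. In your first case this is harmless, because there you only apply $\pi$ to $T$-saturated closed sets, and those do have closed image under a quotient map. But in the third case you invoke closedness of $\pi$ to promote the continuous bijection $\pi|_{\widetilde M_0}\colon\widetilde M_0\to M$ to a homeomorphism and hence conclude $\widetilde M_0$ is compact. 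That does not follow. A separator is merely compact and connected; it need not be locally connected, so the components of $\widetilde M$ need not be open in $\widetilde M$. Nothing then prevents a sequence $x_n\in\widetilde M_0$ with $p_1(x_n)\to\infty$ while $T^{-k_n}x_n\to\widetilde m\in\widetilde M_0$ through other components. The same failure of local connectedness undermines the ``more than one $T$-orbit'' case: a proper union of $T$-orbits of components is certainly $T$-invariant, but an infinite union of components need not be clopen, so you cannot push it down to a separation of $M$. The intermediate-cover case is also left as a gesture rather than an argument.

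The paper avoids all of this by buying tameness up front. It approximates $M$ from above by a decreasing sequence of separators $M_n$ which are compact domains with smooth boundary, reducing to the case where $M$ is path-connected with finitely many boundary components. Such an $M$ contains an essential circle $\gamma$; then $\pi^{-1}(\gamma)$ is a single $T$-invariant line in $\widetilde A$, and every point of $\pi^{-1}(M)$ is joined to $\pi^{-1}(\gamma)$ by lifting an arc in $M$. In other words, the component-wise analysis you attempt genuinely requires local/path connectedness of $M$; the paper's approximation step is precisely what supplies it, after which the proof is a two-line arc-lifting rather than a case analysis.
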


\begin{proof}
Let $M_n \subset A$ be a decreasing sequence of separators such that  each $M_n$ is a compact domain with smooth boundary, and
$$
\bigcap\, M_n=M.
$$
(It is elementary to construct such $M_n$'s). Then 
$$
\bigcap\, \pi^{-1}(M_n)=\pi^{-1}(M),
$$
and $\pi^{-1}(M_n)$ is  decreasing. If  each $\pi^{-1}(M_n)$ is connected then  $\pi^{-1}(M)$ is the intersection of a decreasing sequence of connected sets, and  
it is connected as such.  Therefore, it suffices to prove  that $\pi^{-1}(M)$ is connected assuming  $M$ is a separator which  is a compact domain with smooth boundary. We do this in the remainder of the proof.

Since $M$ is a compact domain with boundary which separates the two ends of $A$, we can find a circle $\gamma\subset M$ which is essential in $A$ (i.e. $\gamma$ is a separator itself) (note that $M$ has only finitely many boundary components).  Denote by $T$ the deck transformation of $\widetilde{A}$. Thus, the lift $\pi^{-1}(\gamma)$ is a $T$ invariant, connected subset of $\widetilde{A}$. Let $C$ be the component of $\pi^{-1}(M)$ which contains $\pi^{-1}(\gamma)$. Then $C$ is $T$ invariant. We show $\pi^{-1}(M)=C$.

Let $p\in M$. Since $M$ is a compact  domain with smooth boundary, we can find an embedded closed arc $\alpha\subset M$ which connects $p$ and $\gamma$. Let $\widetilde{p}$ be a lift of $p$ and let $\widetilde{\alpha}$ be the corresponding lift of $\alpha$ such that $\widetilde{p}$ is one of its endpoints. Then, the other endpoint of  $\widetilde{\alpha}$ is in $\pi^{-1}(\gamma)$, and this shows that  $\widetilde{p} \in C$. This concludes the proof.

\end{proof}

Now we discuss an ordering on the set of separators. 
\begin{prop}\label{ordering}
Suppose $M_1,M_2 \subset A$ are two disjoint separators. Then either $M_1\subset A_L(M_2)$  or  $M_1\subset A_R(M_2)$. Moreover, $M_1\subset A_L(M_2)$ implies  $M_2\subset A_R(M_1)$. 
\end{prop}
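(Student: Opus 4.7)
The plan is to lift to the universal cover $\pi\colon \widetilde{A} \to A$, with deck transformation $T$, and to apply Proposition \ref{sepprop}. Writing $\widetilde{M}_i := \pi^{-1}(M_i)$, each lift is connected (Proposition \ref{sepprop}) and $T$-invariant by construction. Since $M_1 \cap M_2 = \emptyset$, the lifts are disjoint, so $\widetilde{M}_1$ sits inside a single component $C$ of $\widetilde{A} - \widetilde{M}_2$; because $\widetilde{M}_1$ is $T$-invariant and $T$ permutes the components of $\widetilde{A} - \widetilde{M}_2$, the component $C$ itself is $T$-invariant.

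For the first assertion I would enumerate the $T$-invariant components of $\widetilde{A} - \widetilde{M}_2$. Put $U_L^2 := \pi^{-1}(A_L(M_2))$ and $U_R^2 := \pi^{-1}(A_R(M_2))$. By Lemma \ref{separator}, each of $A_L(M_2), A_R(M_2) \hookrightarrow A$ is a homotopy equivalence of annuli and so induces an isomorphism on $\pi_1$; this means $U_L^2$ and $U_R^2$ are connected, and both are evidently $T$-invariant. Any remaining component of $\widetilde{A} - \widetilde{M}_2$ lies over a simply connected ``other'' component $V$ of $A - M_2$, and over such a $V$ the covering $\pi$ trivializes as a disjoint union of sheets that $T$ permutes freely and cyclically; no individual sheet is $T$-invariant. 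Hence $C \in \{U_L^2, U_R^2\}$, and projecting yields $M_1 \subset A_L(M_2)$ or $M_1 \subset A_R(M_2)$.

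For the second assertion, assume for contradiction that $M_1 \subset A_L(M_2)$ and $M_2 \subset A_L(M_1)$, equivalently $\widetilde{M}_1 \subset U_L^2$ and $\widetilde{M}_2 \subset U_L^1$. Then $\widetilde{M}_2 \cap U_R^1 = \emptyset$ and $\widetilde{M}_1 \cap U_R^2 = \emptyset$, so $U_R^1$ and $U_R^2$ are connected subsets of $\widetilde{A} - (\widetilde{M}_1 \cup \widetilde{M}_2)$. Both contain the preimage of a punctured collar of the right end of $A$, so both are contained in the unique component $\Omega$ of $\widetilde{A} - (\widetilde{M}_1 \cup \widetilde{M}_2)$ that meets this collar preimage; conversely, $\Omega \subset U_R^1$ and $\Omega \subset U_R^2$ because $\Omega$ is connected, misses $\widetilde{M}_1 \cup \widetilde{M}_2$, and meets each. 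Thus $\Omega = U_R^1 = U_R^2$, and its topological boundary in $\widetilde{A}$ satisfies $\partial \Omega \subset \widetilde{M}_1 \cap \widetilde{M}_2 = \emptyset$. So $\Omega$ is clopen in the connected space $\widetilde{A}$, forcing $\Omega = \widetilde{A}$, which contradicts $\widetilde{M}_1 \neq \emptyset$ with $\widetilde{M}_1 \cap \Omega = \emptyset$.

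The point of the argument that will need the most care is the classification of $T$-invariant components of $\widetilde{A} - \widetilde{M}_2$ in the first assertion, in particular the justification that the preimage of a simply connected ``other'' component of $A - M_2$ splits into sheets on which $T$ acts freely and transitively. Once that is in hand, both assertions reduce to bookkeeping about connected components and their topological boundaries.
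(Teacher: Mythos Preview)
Your proof is correct. For the first assertion, your argument is essentially the paper's: both use that $\pi^{-1}(M_1)$ is connected (Proposition~\ref{sepprop}) and $T$-invariant to rule out $M_1$ landing in a simply connected ``bubble'' component of $A-M_2$, since over such a component the covering trivializes and $T$ acts freely on the sheets. The only difference is cosmetic: the paper first places $M_1$ in a component $C$ of $A-M_2$ downstairs and then lifts to derive a contradiction, whereas you work upstairs from the start and classify the $T$-invariant components.

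For the second assertion your route is genuinely different. The paper argues downstairs: from $M_1\subset A_L(M_2)$ it asserts $A_L(M_1)\subset A_L(M_2)$, and symmetrically $M_2\subset A_L(M_1)$ would give $A_L(M_2)\subset A_L(M_1)$, whence $M_2\subset A_L(M_1)\subset A_L(M_2)$, contradicting $M_2\cap A_L(M_2)=\emptyset$. This is short but leans on the nesting implication $M_1\subset A_L(M_2)\Rightarrow A_L(M_1)\subset A_L(M_2)$, which the paper states without proof (it can be justified, e.g., in the two-sphere compactification the complement of the simply connected domain $\Omega_L^2$ is a continuum disjoint from $M_1$ and containing $p_R$, hence lies in $\Omega_R^1$). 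Your clopen argument in the cover sidesteps this: you show $U_R^1=U_R^2$ directly and observe its boundary lies in $\widetilde{M}_1\cap\widetilde{M}_2=\emptyset$, forcing it to be all of the connected strip $\widetilde{A}$. This is arguably more self-contained, at the cost of a little more bookkeeping upstairs.
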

\begin{proof}
Since $M_1$ is connected it follows that $M_1$ is a subset of a connected component $C$ of $A-M_2$. If $C$ is simply connected, the cover $\pi^{-1}(C)\to C$ is a trivial cover. Let $\widetilde{C}$ be a connected component of $\pi^{-1}(C)$. By  Proposition \ref{sepprop}, the set $\pi^{-1}(M)$ is connected so it is contained in a single connected component of $\pi^{-1}(C)$. However, this contradicts the fact that  $\pi^{-1}(M)$ is translation invariant. Thus,  either $M_1\subset A_L(M_2)$ or $M_1\subset A_R(M_2)$.

Suppose  $M_1\subset A_L(M_2)$. Then $A_L(M_1)\subset A_L(M_2)$ as well. On the other hand, by the first part of the proposition we already know that either $M_2\subset A_L(M_1)$ or $M_2\subset A_R(M_1)$. If  $M_2\subset A_L(M_1)$, then $A_L(M_2)\subset A_L(M_1)$. This shows that $A_L(M_1)\subset A_L(M_2)$ which implies that $M_2\subset A_L(M_2)$. This is absurd so we must have $M_2\subset A_R(M_1)$.
\end{proof}

\vskip .3cm

\begin{defn}\label{def-ordering}

The inclusion $M_1\subset A_L(M_2)$ is denoted as $M_1<M_2$.

\end{defn}

\vskip .3cm

\subsection{The rotation interval of an annular continuum and prime ends}

Let  $K\subset A$ be a separator (in literature also known as an \emph{essential continuum}). We call $K$ an essential \emph{annular continuum} if $A - K$ has exactly two components. Observe that an essential annular continuum can be expressed  as a decreasing intersection of essential closed topological annuli in $A$.

It is possible to turn any separator $M\subset A$ into  an essential  annular continuum. Let $M$ be a separating connected set. By Lemma \ref{separator}, we know that $A-M$ has exactly two connected annular components $A_L(M)$ and $A_R(M)$, and all other components of $A-M$ are simply connected. We call a simply connected component of $A-M$ a \emph{bubble component}. Then the \emph{annular completion} $K(M)$ of $M$ is defined as the union of $M$ and the corresponding  bubble components of $A-M$.

\begin{prop}
Let $M\subset A$ be a separator. Then the annular competition $K(M)$ is an annular continuum. 
\end{prop}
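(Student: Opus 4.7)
The plan is to verify the four defining properties of an essential annular continuum directly from the decomposition provided by Lemma~\ref{separator}. First I would record the key identity
\[
K(M) = A - (A_L(M) \cup A_R(M)),
\]
which is immediate from Lemma~\ref{separator}: the complement $A - M$ is the disjoint union of $A_L(M)$, $A_R(M)$, and the simply connected bubble components $B_\alpha$, so adjoining to $M$ all of the $B_\alpha$ leaves out exactly the two annular components. This identity gives $A - K(M) = A_L(M) \sqcup A_R(M)$ at once, which is the annular continuum condition; what remains is to check that $K(M)$ is a separator, i.e. compact, connected, and separating.

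For compactness, I would work in the two-point compactification $S^2 = A \cup \{p_L, p_R\}$ already used in the proof of Lemma~\ref{separator}. The two annular components extend to open sets $\Omega_L = A_L(M) \cup \{p_L\}$ and $\Omega_R = A_R(M) \cup \{p_R\}$ of $S^2$, namely the components of $S^2 - M$ containing $p_L$ and $p_R$. Consequently
\[
K(M) = S^2 - (\Omega_L \cup \Omega_R)
\]
is closed in the compact space $S^2$ and therefore compact. Since $p_L, p_R \notin K(M)$, it is in fact a compact subset of $A$.

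For connectedness, I would use that each bubble $B_\alpha$ is a component of the open set $A - M$, so its frontier in $A$ lies in $M$; in particular $\overline{B_\alpha}$ is a connected set meeting the connected set $M$, so $M \cup B_\alpha$ is connected. Therefore $K(M) = M \cup \bigcup_\alpha B_\alpha$ is a union of connected sets all sharing the connected set $M$, and is itself connected. Finally, $K(M)$ is separating because it contains $M$, which by hypothesis separates the two ends of $A$.

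None of these steps presents a serious obstacle: once the identity $K(M) = A - (A_L(M) \cup A_R(M))$ is in hand the argument reduces to structural bookkeeping. The only mildly subtle point is compactness, where one must rule out accumulation of bubble components at the ends of $A$, and the $S^2$ compactification handles this cleanly by turning closedness in $A$ into closedness in a compact ambient space.
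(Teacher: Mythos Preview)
Your proof is correct and takes essentially the same route as the paper: both pass to the two-point compactification $S^2$ and use the identity $K(M)=S^2-(\Omega_L\cup\Omega_R)$ to obtain compactness and the two-component condition. The only minor variation is in the connectedness step, where the paper simply cites that the complement of two disjoint open discs in $S^2$ is connected, while you argue directly that $K(M)=M\cup\bigcup_\alpha B_\alpha$ is a union of connected sets each meeting $M$.
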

\begin{proof}
We can again compactify $A$ by adding the points $p_L$ and $p_R$, one at each end. The compactification is the two sphere $S^2$. Then $A_L(M)$ and $A_R(M)$ are two disjoint open discs in $S^2$, and $K(M)=S^2-(A_L(M)\cup A_R(M))$. But the complement of two disjoint open discs in $S^2$ is connected. This proves the proposition.

\end{proof}

Now let $f$ be a homeomorphism of $A$ that leaves an annular continuum $K$ invariant. If $\mu$ is an invariant Borel probability measure,  we define the $\mu$-rotation number 
\[
\sigma(f, \mu)=\int_A\phi d\mu
\]
where $\phi: A \to \mathbb{R}$ is the function which lifts to the function $p_1\circ f-p_1$ on $\widetilde{A}$ (recall that $p_1:\widetilde{A} \to  \mathbb{R}$ is the projection onto the first coordinate).

The set of $f$ invariant Borel probability measures on $K$ is a non empty, convex, and compact set (with respect to the weak topology on the space of measures).
We define the \emph{rotation interval} of $K$
\[
\sigma(f, K)=\{\sigma(f, \mu)|\mu\in M(K)\}
\]
which is a non-empty segment $[\alpha,\beta]$ of $\mathbb{R}$. The interval is non empty because there exists at least one $f$ invariant measure, and it is an interval because the set of $f$ invariant measures is convex.

The following is a classical result of Franks--Le Calvez \cite[Corollary 3.1]{FLC}.
\begin{prop}
If $\sigma(f, K)=\{\alpha\}$, the sequence 
\[
\frac{p_1\circ f^n(x)-p_1(x)}{n}
\]
converges uniformly for $x\in \pi^{-1}(K)$ to the constant function $\alpha$. This implies that points in $K$ all have the rotation number $\alpha$.
\end{prop}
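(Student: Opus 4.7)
The plan is to recognize the statement as a standard unique-ergodicity-style fact applied to the continuous displacement function $\phi$. First I would check that $\phi \colon A \to \mathbb{R}$ is well-defined and continuous: if $\widetilde{x}$ and $T^k\widetilde{x}$ are two lifts of $x$, then $\widetilde{f}$-equivariance under $T$ gives $p_1(\widetilde{f}(T^k\widetilde{x})) - p_1(T^k\widetilde{x}) = p_1(\widetilde{f}(\widetilde{x})) - p_1(\widetilde{x})$, so $\phi$ really descends from $\widetilde{A}$ to $A$. A telescoping identity then yields, for any lift $\widetilde{x}$ of $x \in K$,
\[
\frac{p_1(\widetilde{f}^n(\widetilde{x})) - p_1(\widetilde{x})}{n} \;=\; \frac{1}{n}\sum_{k=0}^{n-1} \phi(f^k(x)),
\]
so the conclusion to be proved is exactly the statement that the Birkhoff averages of $\phi$ converge uniformly on $K$ to the constant $\alpha$.

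Next I would argue by contradiction. Suppose uniform convergence fails. Then there exist $\varepsilon > 0$, integers $n_k \to \infty$, and points $x_k \in K$ with
\[
\Bigl| \frac{1}{n_k}\sum_{j=0}^{n_k - 1} \phi(f^j(x_k)) - \alpha \Bigr| \;\geq\; \varepsilon.
\]
I would form the empirical measures $\mu_k = \frac{1}{n_k}\sum_{j=0}^{n_k-1} \delta_{f^j(x_k)}$, each a Borel probability measure supported on the compact set $K$. By weak-$\ast$ compactness of the space of probability measures on $K$, a subsequence of $\mu_k$ converges to some $\mu_\infty$. The standard telescoping estimate showing that $\mu_k - f_*\mu_k$ pairs with any continuous test function to give a quantity of size $O(1/n_k)$ then forces $\mu_\infty$ to be $f$-invariant, so $\mu_\infty \in M(K)$.

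Finally, continuity of $\phi$ on $K$ lets the integral pass to the weak-$\ast$ limit, giving $\int \phi\, d\mu_\infty = \lim_k \int \phi\, d\mu_k$. The left-hand side equals $\sigma(f,\mu_\infty)$, which lies in $\sigma(f,K) = \{\alpha\}$ by hypothesis, while the right-hand side stays at distance at least $\varepsilon$ from $\alpha$ along the chosen subsequence, a contradiction. The pointwise rotation-number claim at the end of the proposition then follows immediately from the reformulation in the first paragraph, since uniform convergence of the Birkhoff averages makes the limit in \eqref{rot-0} exist and equal $\alpha$ for every lift of every point of $K$. I do not expect a genuine obstacle: the only point requiring care is that $\phi$ be continuous and bounded on $K$, which is routine once the descent to $A$ is checked and the compactness of $K$ is used.
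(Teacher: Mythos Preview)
The paper does not give its own proof of this proposition; it simply records it as a classical result and cites Franks--Le~Calvez \cite[Corollary~3.1]{FLC}. Your argument is correct and is essentially the standard unique-ergodicity/Krylov--Bogolyubov argument that underlies the cited result: rewrite the displacement quotient as a Birkhoff average of the continuous displacement cocycle $\phi$, pass to a weak-$*$ limit of empirical measures along a putative bad sequence, observe that the limit is $f$-invariant and supported on $K$, and contradict the hypothesis $\sigma(f,K)=\{\alpha\}$. There is nothing to compare against in the paper itself, and no gap in your reasoning.
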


The following  theorem of Franks--Le Calvez \cite[Proposition 5.4]{FLC}  is a generalization of the Poincar\'e-Birkhoff Theorem.
\begin{thm}\label{PB}
If $f$ is area-preserving and $K$ is an annular continuum, then every rational number in $\sigma(f,K)$ is realized by a periodic point in $K$.
\end{thm}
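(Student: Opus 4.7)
The plan is to reduce the statement to the classical area-preserving Poincar\'e-Birkhoff theorem (the second bullet of Theorem~\ref{rotationproperty}) applied to closed sub-annuli that shrink down to $K$, and then extract the periodic point in $K$ itself by a compactness argument. Fix a rational $r=p/q\in\sigma(f,K)=[\alpha,\beta]$ and a lift $\widetilde{f}$ of $f$ to the universal cover $\widetilde{A}$.

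First I would identify the endpoints $\alpha,\beta$ of $\sigma(f,K)$ with the prime-ends rotation numbers of the two complementary annular components. Since $A-K$ consists of two $f$-invariant open annuli $A_L(K)$ and $A_R(K)$ (by the annular continuum hypothesis), Matsumoto's theorem assigns each a prime-ends rotation number $\rho_L$ and $\rho_R$; the claim is $\{\rho_L,\rho_R\}=\{\alpha,\beta\}$. One inclusion uses that measures on $K$ are measures on $A$, so $[\alpha,\beta]\subset[\rho_L,\rho_R]$; the reverse inclusion follows from a Birkhoff averaging argument that realizes $\rho_L,\rho_R$ as $\sigma(f,\mu)$ for some invariant $\mu$ on $K$, exploiting the area-preservation of $f$.

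Second, suppose $\alpha<r<\beta$. Choose a decreasing sequence of compact essential sub-annuli $A_n\subset A$ with $K\subset\Int(A_n)$, $\bigcap_n A_n=K$, and each $\partial A_n$ consisting of two essential circles, one in $A_L(K)$ and one in $A_R(K)$. Choosing these circles as level sets of a prime-end uniformizer, Matsumoto's theorem implies that for $n$ large the two boundary circles are displaced by $\widetilde{f}^q$ on opposite sides of the integer $p$. This is the classical Poincar\'e-Birkhoff twist condition, and Theorem~\ref{rotationproperty} applied to $f^q$ on $A_n$ with the lift $\widetilde{f}^q-p$ produces a periodic point $x_n\in A_n$ of $f$ with rotation number $r$. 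A subsequential limit $x_n\to x$ lies in $\bigcap_n A_n=K$, and continuity of $\widetilde{f}^q-p$ gives $f^q(x)=x$ with the correct rotation. The endpoint case $r\in\{\rho_L,\rho_R\}$ is handled directly by Matsumoto: a rational prime-ends rotation number yields a periodic prime end, and an accessibility argument combined with area-preservation promotes this to a periodic orbit in $K$ itself.

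The main obstacle is the twist computation on $\partial A_n$: a priori the pointwise rotation number of boundary points need not equal the prime-ends rotation number, so one must choose the approximating annuli carefully --- via prime-ends uniformization of $A_L(K)$ and $A_R(K)$ --- and then verify that the displacement $\widetilde{f}^q-p$ has the correct signs on the two boundary curves for $n$ large. A secondary subtlety is that Theorem~\ref{rotationproperty} as stated requires $A_n$ to be $f$-invariant; this is handled either by invoking Franks' strengthening of Poincar\'e-Birkhoff that requires only the boundary twist (with no literal self-invariance), or by modifying $f$ outside a small neighborhood of $K$ to make $A_n$ invariant while preserving both the area form and the twist on $\partial A_n$.
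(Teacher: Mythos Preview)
The paper does not prove this theorem; it is quoted as Proposition~5.4 of Franks--Le~Calvez \cite{FLC} and used as a black box, so there is no in-paper proof to compare against.

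As for your proposal itself, the first step contains a genuine gap. You assert that the endpoints $\alpha,\beta$ of $\sigma(f,K)$ coincide with the prime-end rotation numbers $\rho_L,\rho_R$, and justify the inclusion $[\alpha,\beta]\subset[\rho_L,\rho_R]$ by ``measures on $K$ are measures on $A$.'' That inclusion of measure spaces goes the wrong way for your purpose: it would at best embed $\sigma(f,K)$ into a rotation set computed over a \emph{larger} set, not into the two-point set $\{\rho_L,\rho_R\}$. What you actually obtain (via the Birkhoff averaging you mention, which is essentially Matsumoto's theorem) is the reverse inclusion $\{\rho_L,\rho_R\}\subset[\alpha,\beta]$. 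In general, even for area-preserving $f$, the prime-end rotation numbers need not be the extreme points of the rotation interval of $K$; both may sit strictly in the interior of $[\alpha,\beta]$, or even coincide. Once this identification fails, your second step collapses: for a rational $r$ with $\alpha<r<\beta$ you can no longer force the Poincar\'e--Birkhoff twist condition on the boundary circles of the approximating annuli $A_n$, since the displacement of $\widetilde{f}^q$ near the two frontiers of $K$ is governed by $\rho_L,\rho_R$, and these may lie on the same side of $p/q$.

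The actual Franks--Le~Calvez argument proceeds quite differently: it exploits that area-preservation makes every point chain recurrent, and uses Franks' generalization of Poincar\'e--Birkhoff for chain recurrent annulus homeomorphisms together with properties of the rotation set for invariant annular continua, rather than prime-end compactifications.
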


The theory of prime ends is an important tool in the study of 2-dimensional dynamics which can be used to transform a 2-dimensional problem into a 1-dimensional problem. Recall that we assume that $A$ is an open annulus embedded in a Riemann surface $S$.  Suppose that $f$ is a homeomorphism of $S$ which leaves $A$ invariant. Furthermore, let $K\subset A$ be an annular continuum and  suppose that $f$  leaves $K$ invariant. Then both $A_L(K)$ and $A_R(K)$ are $f$ invariant. 

Since $A$ is embedded in $S$, we can define the frontiers of $A$, $A_L(K)$, and $A_R(K)$. By Carath\'eodory's theory of prime ends (see, e.g., \cite[Chapter 15]{Milnor}), the homeomorphism $f$   yields an action on the frontiers of $A_L(K)$ and $A_R(K)$.  Consider the right hand frontier of $A_L(K)$ (the one which is contained in $A$). Then the set of prime ends on this frontier is homeomorphic to the circle, and we denote by  $f_L$ the induced homeomorphism this circle. Likewise, the set of prime ends on left hand  frontier of $A_R(K)$  is homeomorphic to the circle, and we denote by  $f_R$ the induced homeomorphism this circle. 

The rotation number of a circle homeomorphism   (defined by Equation \eqref{rot}), is well defined everywhere and is the same number for any point on the circle. The rotation numbers of $f_L$ and $f_R$ are called $r_L$ and $r_R$. We refer to them as the left and right prime end rotation numbers of $f$. We have the following theorem of Matsumoto \cite{Mat}.
\begin{thm}[Matsumoto's theorem]\label{Matsumoto}
If $K$ is an annular continuum, then its left and right
prime ends rotation numbers $r_L,r_R$ belong to the rotation interval $\sigma(f, K)$.
\end{thm}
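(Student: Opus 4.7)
The plan is to exhibit an $f$-invariant Borel probability measure $\mu$ supported on $K$ with $\sigma(f,\mu)=r_L$; the case of $r_R$ is entirely symmetric. The construction transfers an invariant measure on the prime ends circle $E_L$ into $A$ along cross-cut chains and then verifies compatibility of rotation numbers.

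First, apply Krylov--Bogolyubov to the circle homeomorphism $f_L$ to obtain an $f_L$-invariant Borel probability measure $\nu$ on $E_L$. Choose a lift $\widetilde{f}_L$ of $f_L$ compatible with the chosen lift $\widetilde{f}$ of $f$, and let $\psi\colon E_L\to\mathbb{R}$ be the $\mathbb{Z}$-periodic displacement $\psi(\widetilde{x})=\widetilde{f}_L(\widetilde{x})-\widetilde{x}$. Since every invariant measure of a circle homeomorphism records the classical rotation number,
\[
\int_{E_L}\psi\,d\nu=r_L.
\]

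Next, transfer $\nu$ into $K$ via cross-cuts. For each $p\in E_L$, Carath\'eodory's theory yields a defining chain of cross-cuts $\gamma_n(p)\subset A_L(K)$ shrinking to the impression of $p$, which is contained in $K$. Choose $x_n(p)\in\gamma_n(p)$ measurably in $p$ and set $\mu_n=(x_n)_*\nu$. Any weak-$*$ accumulation point $\mu$ of $\{\mu_n\}$ is supported on $K$ because $x_n(p)\to K$ for every $p$. To see that $\mu$ is $f$-invariant, note that $f(\gamma_n(p))$ is itself a cross-cut representing $f_L(p)$, so $f(x_n(p))$ and $x_n(f_L(p))$ both lie in cross-cuts shrinking to the impression of $f_L(p)$; using the $f_L$-invariance $(f_L)_*\nu=\nu$, the difference $f_*\mu_n-\mu_n=(f\circ x_n)_*\nu-(x_n\circ f_L)_*\nu$ tends to $0$ weakly, whence $f_*\mu=\mu$.

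Finally, compute the rotation number. The displacement $\phi=p_1\circ\widetilde{f}-p_1$ is continuous on $A$. Applying Carath\'eodory's theorem on the universal cover of $A_L(K)$, the lift $\widetilde{f}$ extends continuously to the prime ends boundary $\widetilde{E}_L\cong\mathbb{R}$ and induces $\widetilde{f}_L$; since cross-cuts along a defining chain have vanishing $p_1$-diameter, $\phi$ extends continuously to $A\cup E_L$ with boundary values exactly $\psi$. Therefore
\[
\sigma(f,\mu)=\int\phi\,d\mu=\lim_{n\to\infty}\int\phi\,d\mu_n=\lim_{n\to\infty}\int_{E_L}\phi(x_n(p))\,d\nu(p)=\int_{E_L}\psi\,d\nu=r_L,
\]
placing $r_L$ in $\sigma(f,K)$. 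The technical heart of the argument is justifying the continuous extension of $\phi$ to $E_L$ with boundary values $\psi$, which requires Carath\'eodory's theory applied to the simply connected cover together with compatibility between the identification $\widetilde{E}_L\cong\mathbb{R}$ and the projection $p_1\colon\widetilde{A}\to\mathbb{R}$; the remaining steps are standard weak-$*$ compactness arguments in the spirit of Franks--Le Calvez.
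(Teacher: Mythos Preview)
The paper does not prove this theorem; it is quoted from Matsumoto \cite{Mat} as an external input. So there is no ``paper's own proof'' to compare against, and the question is simply whether your argument stands on its own.

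It does not. The decisive step is your claim that the displacement $\phi=p_1\circ\widetilde{f}-p_1$ extends continuously from $A_L(K)$ to the prime end circle $E_L$ with boundary values $\psi$, and the justification you offer (``cross-cuts along a defining chain have vanishing $p_1$-diameter'') is false in general. In Carath\'eodory's theory the cross-cuts of a defining chain are only required to cut off nested ends whose closures intersect in the impression; their diameters in the ambient surface need not tend to zero. Equivalently, under a Riemann map the small half-circles in the model strip can be sent to long arcs in $\pi^{-1}(A_L(K))\subset P$ with large $p_1$-oscillation. More fundamentally, the impression of a prime end $p$ is typically a nontrivial continuum in $\partial A_L(K)\subset K$, and $\phi$ (which is continuous on all of $A$) can take different values at different points of that impression. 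Hence sequences in $A_L(K)$ converging to the same prime end $p$ can yield different limits of $\phi$, and no continuous extension to $E_L$ exists. Your own parenthetical concedes that this extension is ``the technical heart of the argument,'' but appealing to Carath\'eodory plus ``compatibility between the identification $\widetilde{E}_L\cong\mathbb{R}$ and $p_1$'' does not supply a proof: that compatibility is exactly what is at stake, and it fails without further hypotheses.

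There is a secondary gap as well. The invariance argument for $\mu$ asserts that $(f\circ x_n)_*\nu-(x_n\circ f_L)_*\nu\to 0$ weakly because $f(x_n(p))$ and $x_n(f_L(p))$ lie on cross-cuts ``shrinking to the impression of $f_L(p)$.'' But two sequences converging to the same impression need not be close in $A$ (again because impressions are large), so pairing against a test function need not give the same limit. Without the continuous boundary extension of $\phi$, neither the invariance of $\mu$ nor the evaluation $\sigma(f,\mu)=r_L$ goes through. Matsumoto's actual proof circumvents these issues by a different mechanism (working with accessible prime ends and carefully chosen arcs, and arguing by contradiction using Brouwer-type translation arguments rather than by a direct measure transfer); the measure-push-forward shortcut you propose does not survive the pathology of general annular continua.
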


\section{Minimal decompositions and the Torelli group theory}

\subsection{Minimal decompositions} We recall the theory of minimal decompositions of surface homeomorphisms. This is established in \cite{Mar}. 
Firstly we recall the upper semi-continuous decomposition of a surface; see also Markovic \cite[Definition 2.1]{Mar}. Let $M$ be a surface. 
\begin{defn}[Upper semi-continuous decomposition]
Let $\mathbf{S}$ be a collection of closed, connected subsets of $M$. We say that $\mathbf{S}$ is an upper semi-continuous decomposition of $M$ if the following holds:
\begin{itemize}
\item If $S_1,S_2\in \mathbf{S}$, then $S_1\cap S_2=\emptyset$.
\item If $S\in \mathbf{S}$, then $E$ does not separate $M$; i.e., $M-S$ is connected.
\item We have $M=\bigcup_{S\in \mathbf{S}} S$.
\item If $S_n\in \mathbf{S}, n\in \mathbb{N}$ is a sequence that has the Hausdorff limit equal to $S_0$ then there exists $S\in\mathbf{S}$ such that $S_0\subset S$.
\end{itemize}
\end{defn}
Now we define acyclic sets on a surface.
\begin{defn}[Acyclic sets]
Let $S\subset M$ be a closed, connected subset of $M$ which does not separate $M$. We say that $S$ is \emph{acyclic} if there is a simply connected open set $U\subset M$ such that $S \subset U$ and $U-S$ is homeomorphic to an annulus.
\end{defn}
The simplest examples of acyclic sets are a point, an embedded closed arc and an embedded closed disk in $M$. Let $S\subset M$ be a closed, connected set that does not separate M. Then $S$ is acyclic if and only if there is a lift of $S$ to the universal cover $\widetilde{M}$ of $M$, which is a compact subset of $\widetilde{M}$. 
The following theorem is a classical result called Moore's theorem; see, e.g., \cite[Theorem 2.1]{Mar}. 
\begin{thm}[Moore's theorem]\label{moore}
Let $M$ be a surface and $\mathbf{S}$ be an upper semi-continuous decomposition of $M$ so that every element of $\mathbf{S}$ is acyclic. Then there is a continuous map $\phi:M\to M$ that is homotopic to the identity map on $M$ and such that for every $p\in M$, we have $\phi^{-1}(p)\in \mathbf{S}$. Moreover  $\mathbf{S}=\{\phi^{-1}(p)|p\in M\}$.
\end{thm}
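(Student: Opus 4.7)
The plan is to construct $\phi$ as the composition of the quotient map $q\colon M\to M/\mathbf{S}$ with a homeomorphism $h\colon M/\mathbf{S}\to M$, so the real content lies in showing that $M/\mathbf{S}$ is a surface homeomorphic to $M$ and that the quotient map is homotopic to the identity through a natural collapsing.

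First, I would check that $M/\mathbf{S}$, equipped with the quotient topology, is a Hausdorff, second-countable space. Hausdorffness is exactly the consequence of the upper semi-continuity axiom: given $S_1\neq S_2$ in $\mathbf{S}$, if they could not be separated by saturated open sets then one could extract a sequence of decomposition elements Hausdorff-converging into both $S_1$ and $S_2$, contradicting the last defining property of $\mathbf{S}$. Second-countability is inherited from $M$ since $\mathbf{S}$ is a decomposition.

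The main step, and the main obstacle, is to verify that $M/\mathbf{S}$ is a 2-manifold. This is where acyclicity is essential. Fix $S\in\mathbf{S}$ and, by acyclicity, choose a simply connected open $U\subset M$ with $S\subset U$ and $U\setminus S$ homeomorphic to an annulus. Upper semi-continuity lets me shrink $U$ so that every $S'\in\mathbf{S}$ meeting $U$ is contained in $U$ (otherwise a sequence of $S_n$'s exiting $U$ would have a limit meeting $S$ from outside, violating the decomposition axioms). Then $U/(\mathbf{S}|_U)$ is the interior of a disk: collapsing $S$ to a point in the simply connected set $U$ with $U\setminus S$ annular yields a topological disk, and the same local argument applied at each $S'\subset U$ shows that further quotients remain disks. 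Patching these disk charts together and invoking the invariance-of-domain style gluing produces a 2-manifold structure on $M/\mathbf{S}$. The delicate point is ensuring the chart maps are genuine homeomorphisms onto open disks, which one does by using that a Peano continuum quotient of a disk by an acyclic non-separating set is again a disk (this is Moore's original theorem for $S^2$, localized).

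Once $M/\mathbf{S}$ is known to be a surface, I would verify it has the same topological type as $M$. The quotient map $q$ is surjective, proper on compact sets, and induces an isomorphism on homology with integer coefficients: collapsing acyclic, non-separating sets does not change $H_*$, which I would see by a Mayer--Vietoris argument using the chart neighborhoods above together with the fact that $\pi^{-1}(S)$ is compact in $\widetilde{M}$ (so $S$ contributes nothing to $H_1$). Combined with genus and end invariance this yields a homeomorphism $h\colon M/\mathbf{S}\to M$ by the classification of surfaces. Setting $\phi=h\circ q$ gives the desired map with $\phi^{-1}(p)\in\mathbf{S}$ for every $p$, and $\mathbf{S}=\{\phi^{-1}(p):p\in M\}$ follows because every element of $\mathbf{S}$ appears as a fibre of $q$. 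Finally, for the homotopy to the identity, inside each chart $U$ the collapsing $S\to\{\mathrm{pt}\}$ is realized by a standard straight-line homotopy in the disk model, and these local homotopies glue (again by upper semi-continuity) to a global homotopy from $\mathrm{id}_M$ to $\phi$. I expect the chart-construction step to be the main obstacle, since it is the part that genuinely requires the interplay between acyclicity and upper semi-continuity.
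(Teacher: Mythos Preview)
The paper does not prove this statement: it is recorded as Moore's classical theorem with a reference to \cite[Theorem~2.1]{Mar} and no argument is given. So there is no in-paper proof to compare your proposal against.

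Your outline follows the classical route (quotient is Hausdorff by upper semi-continuity, locally a disk by acyclicity together with Moore's original $S^2$ theorem, then identify the quotient surface with $M$), and as a plan it is sound. Two remarks. First, the step ``shrink $U$ so that every $S'\in\mathbf{S}$ meeting $U$ is contained in $U$'' is not quite what upper semi-continuity says; what it gives you is a \emph{saturated} open set $V$ with $S\subset V\subset U$, and you should work inside that $V$. Second, and more substantively, the final paragraph on ``gluing local straight-line homotopies'' is the weakest point: homotopies defined chart-by-chart do not patch to a global homotopy without additional input, and this is not how the homotopy to the identity is normally produced. The standard arguments instead go globally: either (i) invoke that a cell-like map between $2$-manifolds is a near-homeomorphism (a uniform limit of homeomorphisms), so $q$ is homotopic to a homeomorphism of $M$ onto $M/\mathbf{S}$; or (ii) once $M/\mathbf{S}\cong M$ is known, note that $\phi=h\circ q$ is a proper degree-one map inducing an isomorphism on $\pi_1$, and appeal to the fact that such a self-map of an aspherical surface is homotopic to the identity. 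Either of these replaces your chart-by-chart patching with a single clean statement and closes the only real gap in your plan.
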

We call the map $M\to M/\sim$ the \emph{Moore map} where $x\sim y$ if and only if $x,y\in S$ for some $S\in \mathbf{S}$. The following definition is \cite[Definition 3.1]{Mar}
\begin{defn}[Admissible decomposition]
Let $\mathbf{S}$ be an upper semi-continuous decomposition of $M$. Let $G$ be a subgroup of $\Homeo(M)$. We say that $\mathbf{S}$ is admissible for the group $G$ if the following holds:
\begin{itemize}
\item Each $f\in G$ preserves setwise every element of $\mathbf{S}$.
\item Let $S\in \mathbf{S}$. Then every point, in every frontier component of the surface $M-S$ is a limit of points from $M-S$ which belong to acyclic elements of $\mathbf{S}$. 
\end{itemize}
If $G$ is a cyclic group generated by a homeomorphism $f:M\to M$ we say that $\mathbf{S}$ is an admissible decomposition of $f$.
\end{defn}
An admissible decomposition for $G<\Homeo(M)$ is called \emph{minimal} if it is contained in every admissible decomposition for $G$. We have the following theorem \cite[Theorem 3.1]{Mar}.
\begin{thm}[Existence of minimal decompositions]
Every group $G<\Homeo(M)$ has a unique minimal decomposition.
\end{thm}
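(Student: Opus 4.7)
The plan is to construct the minimal admissible decomposition explicitly as the common refinement of all admissible decompositions and then verify that the result is itself admissible; uniqueness will be automatic, since if two admissible decompositions are each contained in the other (in the refinement sense) they coincide. Concretely, for each $x \in M$ I would define
\[
I(x) \;=\; \bigcap_{\mathbf{T}\ \text{admissible for } G} T(x),
\]
where $T(x)\in\mathbf{T}$ denotes the unique block containing $x$, and let $S_{\min}(x)$ be the connected component of $I(x)$ containing $x$. Setting $\mathbf{S}_{\min} = \{S_{\min}(x) : x\in M\}$, by construction $S_{\min}(x)\subseteq T(x)$ for every admissible $\mathbf{T}$, so $\mathbf{S}_{\min}$ refines every admissible decomposition.

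First I would verify that $\mathbf{S}_{\min}$ is an upper semi-continuous decomposition. Disjointness and covering reduce to the symmetry $y\in S_{\min}(x)\Rightarrow S_{\min}(y)=S_{\min}(x)$: since $y\in T(x)$ for every admissible $\mathbf{T}$ forces $T(y)=T(x)$, we get $I(y)=I(x)$, and the connected components of this common set through $x$ and $y$ coincide. Each $S_{\min}(x)$ is closed and connected by construction; non-separation should follow from non-separation of the ambient blocks $T(x)$ via a lift-to-universal-cover argument in the spirit of Proposition \ref{sepprop}, since a connected subset of a set with connected preimage should, together with non-separation inheritance, be arrangeable. For the Hausdorff-limit axiom, a convergent sequence $S_{\min}(x_n)\to S_0$ is trapped inside blocks $T(x_n)$ for each admissible $\mathbf{T}$; passing to subsequences and applying upper semi-continuity of $\mathbf{T}$ places $S_0$ inside a single $\mathbf{T}$-block for every $\mathbf{T}$, hence inside a single $\mathbf{S}_{\min}$-block after taking components.

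The heart of the argument is admissibility, and I expect the frontier-limit clause to be the main obstacle. The $G$-invariance of each block is comparatively painless: $I(x)$ is $G$-invariant as an intersection of $G$-invariant sets, and $f\in G$ sends the component of $I(x)$ through $x$ onto the component through $f(x)$; since $f(x)\in T(x)$ for every $\mathbf{T}$, one readily sees $f(x)\in I(x)$, and a small enlargement of the definition (forming the union of components met by the $G$-orbit of $x$) handles the case where $f$ permutes components non-trivially. The real difficulty is showing that every point $p$ on a frontier component of $M-S_{\min}(x)$ is a limit of points lying in acyclic elements of $\mathbf{S}_{\min}$. Given any single admissible $\mathbf{T}$, admissibility supplies points $p_n\to p$ in acyclic $\mathbf{T}$-blocks $T(p_n)$; to transfer acyclicity down to the possibly smaller $S_{\min}(p_n)\subseteq T(p_n)$, I would lift to the universal cover and argue that a closed connected subset of a set with compact lift again has compact lift, so that $S_{\min}(p_n)$ inherits acyclicity from $T(p_n)$. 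The subtle point is that the approximating sequence $p_n$ must work simultaneously for every admissible $\mathbf{T}$, which I expect to handle by a diagonal argument over a countable cofinal family of admissible decompositions, combined with the symmetry property established earlier to ensure that acyclicity witnessed by any one $\mathbf{T}$ survives the intersection.
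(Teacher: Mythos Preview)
The paper does not prove this theorem at all; it is quoted as \cite[Theorem 3.1]{Mar} and used as a black box throughout. So there is no in-paper proof to compare against, and your sketch has to stand on its own.

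Your common-refinement construction is the natural first move, and the bookkeeping for disjointness, covering, and $G$-invariance is fine. But two steps are genuinely incomplete. The non-separation of $S_{\min}(x)$ is not argued: the sentence ``a connected subset of a set with connected preimage should, together with non-separation inheritance, be arrangeable'' is a hope, not a proof, and Proposition~\ref{sepprop} (about separators in an annulus) is not the right tool. A closed connected subset of a non-separating continuum in a surface need not itself be non-separating, so something specific to the intersection over \emph{all} admissible decompositions is needed here.

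The more serious gap is the acyclic-limit clause, and you have correctly flagged it as the crux. Fix one admissible $\mathbf{T}$ and a point $p$ on a frontier component of $M\setminus S_{\min}(x)$. If $p$ lies in the interior of $T(x)$---which is perfectly possible since $S_{\min}(x)$ can be strictly smaller than $T(x)$---then admissibility of $\mathbf{T}$ gives you nothing: any sequence $p_n\to p$ in $M\setminus S_{\min}(x)$ eventually sits inside $T(x)$, and $T(x)$ need not be acyclic, so you cannot conclude $S_{\min}(p_n)\subset T(x)$ is acyclic. Your proposed rescue, a diagonal argument over a ``countable cofinal family of admissible decompositions,'' presupposes exactly what is at issue: there is no a priori reason the partially ordered family of admissible decompositions admits a countable cofinal subfamily, and exhibiting one is essentially equivalent to already having the minimal decomposition in hand. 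Without this, the argument does not close; you should consult the original proof in \cite{Mar}.
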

Denote by $\mathbf{A}(G)$ the sub collection of acyclic sets from $\mathbf{S}(G)$. By a mild abuse of notation, we occasionally refer to  $\mathbf{A}(G)$ as a subset of $S_g$ (the union of all sets from $\mathbf{A}(G)$). To distinguish the two notions we do the following. When we refer to $\mathbf{A}(G)$ as a collection then we consider it as the collection of acyclic sets. When we refer to as a set (or a subsurface of $S_g$) we have in mind the other meaning.

We have the following result \cite[Proposition 2.1]{Mar}.
\begin{prop}\label{subsurface}
Every connected component of $\mathbf{A}(G)$ (as a subset of $S_g$)   is a proper subsurface of $M$ with finitely many ends. 
\end{prop}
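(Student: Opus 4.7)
The plan is to establish three properties in order: (i) $\mathbf{A}(G)$ is open in $M$; (ii) each connected component of $\mathbf{A}(G)$ is a proper subsurface of $M$; and (iii) each such component has finitely many ends.

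For openness, I would fix a point $x$ in an acyclic element $S\in \mathbf{A}(G)$ and use the defining property of acyclicity to find a simply connected open $U\supset S$ with $U\setminus S$ homeomorphic to an annulus. The key step is to produce a smaller open neighborhood $V$ of $S$, with $\overline V\subset U$, such that every element $S'\in \mathbf{S}(G)$ meeting $V$ is contained in $U$. This is exactly where upper semi-continuity enters: if the conclusion failed one could extract a Hausdorff-convergent sequence of elements of $\mathbf{S}(G)$ whose limit meets both $S$ and $\partial U$, and upper semi-continuity would force this limit to lie in a single element of $\mathbf{S}(G)$, contradicting $S\subset U$ together with the pairwise disjointness of decomposition elements. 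Once $V$ has been produced, any element of $\mathbf{S}(G)$ contained in the simply connected set $U$ is acyclic (the set $U$ lifts homeomorphically to the universal cover, and any compact connected subset of $U$ lifts to a compact set). Hence $V\subset \mathbf{A}(G)$, giving openness, so each connected component of $\mathbf{A}(G)$ is an open $2$-submanifold of $M$.

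Properness follows in the situations of interest from the existence of at least one non-acyclic element of $\mathbf{S}(G)$: if instead $\mathbf{A}(G)$ coincided with $M$, then $\mathbf{S}(G)$ would consist entirely of acyclic sets, which is incompatible with the non-trivial dynamics of the groups $G$ the proposition is applied to. For the finiteness of ends, let $C$ be a connected component of $\mathbf{A}(G)$. Its topological frontier $\partial C$ lies in $M\setminus \mathbf{A}(G)$, which is a union of non-acyclic elements. Since distinct elements of $\mathbf{S}(G)$ are disjoint closed connected sets, the connected components of $\partial C$ are individual non-acyclic elements, so the number of ends of $C$ is bounded by the number of non-acyclic elements of $\mathbf{S}(G)$ adjacent to $C$. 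To bound this number I would argue by contradiction: an infinite family of such non-acyclic frontier elements would, by Hausdorff compactness of the space of compact subsets of $M$, admit a convergent subsequence; upper semi-continuity places the Hausdorff limit inside a single element $S^{*}\in \mathbf{S}(G)$, and one then builds a strictly coarser admissible decomposition by absorbing the accumulating family into $S^{*}$, contradicting the minimality of $\mathbf{S}(G)$.

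The main obstacle I expect is the very last step: one must verify in detail that the absorbing procedure produces an \emph{admissible} decomposition in the sense of the definition. In particular, the second axiom of admissibility (every frontier point of $M\setminus S$ is approximated by points in acyclic elements) must be preserved, and the resulting collection must still be $G$-invariant. This requires combining the openness of step (i) with the upper semi-continuous property so that near any frontier point of the absorbed non-acyclic element enough acyclic elements remain available. Making this argument rigorous, and checking $G$-invariance, is the technical heart of the proof; the openness step, although it needs the semi-continuity carefully, is the easier of the two.
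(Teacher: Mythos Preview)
The paper does not prove this proposition; it merely cites it as \cite[Proposition~2.1]{Mar}. So there is no ``paper's proof'' to compare against, and your outline should be judged on its own merits.

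Your openness argument is essentially correct and is the standard one: upper semi-continuity gives a neighborhood $V$ of $S$ such that every decomposition element meeting $V$ lies inside the simply connected $U$, and any compact connected set contained in a disk lifts compactly to $\widetilde M$, hence is acyclic by the characterization stated just before the proposition. Your caveat about properness is also appropriate: as written the statement is only true when at least one element of $\mathbf S(G)$ is non-acyclic, which is the situation in which the proposition is ever invoked.

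The finitely-many-ends argument, however, has gaps more serious than the one you flag. First, the assertion that ``the connected components of $\partial C$ are individual non-acyclic elements'' is not justified: pairwise disjointness of closed connected sets does not prevent several of them from lying in the same connected component of $\partial C$. Second, and more importantly, even if $\partial C$ met only finitely many non-acyclic elements, your bound ``(number of ends of $C$) $\le$ (number of adjacent non-acyclic elements)'' is false in general. A single closed, connected, non-separating set $S\subset M$ can produce several ends in its complement: already a non-separating simple closed curve gives two ends, and more complicated continua can give more. So bounding the number of adjacent non-acyclic elements does not by itself bound the number of ends of $C$. Finally, the absorption-into-$S^*$ step is not just technically delicate but genuinely unclear: you would need the enlarged set to remain closed, connected, and non-separating, the new collection to stay upper semi-continuous, and the admissibility frontier condition to survive; none of these is automatic. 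If you want to complete this route you will need an argument tying the ends of $C$ to the \emph{topology} carried by the adjacent non-acyclic pieces (each such piece has a non-compact lift, hence carries nontrivial $\pi_1$), and then use the finite genus of $M$ to cap their total contribution. That is closer to how the result is obtained in \cite{Mar}.
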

\begin{lem}\label{finer}
For $H<G<\Homeo(M)$, we have that  $\mathbf{A}(G)\subset \mathbf{A}(H)$.\end{lem}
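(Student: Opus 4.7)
The plan is to interpret the inclusion $\mathbf{A}(G)\subset \mathbf{A}(H)$ in the subsurface sense (the union of acyclic pieces of $\mathbf{S}(G)$ is contained inside the union of acyclic pieces of $\mathbf{S}(H)$), and to deduce it from the refinement relation between the two minimal decompositions.

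First, I would observe that any $G$-admissible decomposition is automatically $H$-admissible whenever $H<G$. Both clauses of the definition of admissibility (setwise invariance of each piece, and the limit-of-acyclic-pieces condition for each frontier component of $M-S$) refer only to the action of individual elements, so restricting the group can only weaken the requirement. In particular $\mathbf{S}(G)$ is itself an $H$-admissible decomposition, and minimality of $\mathbf{S}(H)$ forces it to refine $\mathbf{S}(G)$. Concretely, every $S\in \mathbf{S}(G)$ decomposes as a disjoint union $S=\bigsqcup_i S_i$ with $S_i\in \mathbf{S}(H)$.

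The main step is to argue that acyclicity is inherited downward through this refinement: if $S\in \mathbf{A}(G)$ and $S=\bigsqcup_i S_i$ as above, then each $S_i$ lies in $\mathbf{A}(H)$. For this I would use the compact-lift characterization of acyclicity recalled just before the statement of the lemma. Choose a simply connected open neighborhood $U$ of $S$ with $U-S$ homeomorphic to an annulus; then $U$ lifts homeomorphically to $\widetilde{U}\subset \widetilde{M}$ under the universal covering $\pi$, and $\widetilde{S}:=(\pi|_{\widetilde{U}})^{-1}(S)$ is a compact lift of $S$. The set $\widetilde{S}_i:=(\pi|_{\widetilde{U}})^{-1}(S_i)$ is a closed subset of the compact set $\widetilde{S}$, hence compact, and is homeomorphic to the connected set $S_i$. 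Since $S_i$ is already closed, connected, and non-separating by the admissibility axioms for $\mathbf{S}(H)$, the existence of this compact connected lift $\widetilde{S}_i$ exactly witnesses $S_i\in \mathbf{A}(H)$.

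Combining the two steps, every $S\in \mathbf{A}(G)$ is a disjoint union of elements of $\mathbf{A}(H)$, so $S\subset \bigcup \mathbf{A}(H)$, giving the desired inclusion as subsets of $M$. The only real subtlety is notational, namely pinning down the meaning of ``$\subset$'' for decompositions (refinement, not literal set-theoretic inclusion of collections); once that is fixed by appealing to minimality of $\mathbf{S}(H)$ as ``finest $H$-admissible'', the argument reduces to the elementary compact-lift observation above.
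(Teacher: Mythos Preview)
Your argument is correct and follows the same route as the paper: observe that $\mathbf{S}(G)$ is $H$-admissible, so $\mathbf{S}(H)$ refines $\mathbf{S}(G)$, and then conclude the inclusion of acyclic loci. The paper's proof is a single sentence and leaves the acyclicity-inheritance step implicit; your explicit verification via the compact-lift characterization is a welcome elaboration of exactly that point.
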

\begin{proof}  $\mathbf{A}(G)\subset \mathbf{A}(H)$ because the minimal decomposition of $G$ is also an admissible decomposition of $H$ and the minimal decomposition of $H$ is finer than that of $G$. 
\end{proof}

\subsection{The Torelli group and simple BP maps} 

From this point one, we fix a closed surface $S_g$ and  a separating simple closed curve $c$ which divides the surface $S_g$  into a genus $k$ subsurface and a genus $g-k$ subsurface for $k>2$. We call the genus $k$ part the \emph{left subsurface} $S_L$ and the genus $g-k$ part the \emph{right subsurface} $S_R$. 
For any simple closed curve $a$, denote by $T_a$ the Dehn twist about $a$. 
\begin{defn}
For $a,b$ two disjoint non-separating curves on $S_L$ bounding a genus $1$ subsurface, we call the bounding pair map $T_aT_b^{-1}$ a \emph{simple bounding pair map}, which is shortened as a simple BP map. 
\end{defn}
Let ${\mathcal L\mathcal I}(c)\subset {\mathcal I}(S_g)$ be the subgroup generated by simple BP maps on the left subsurface $S_L$. About ${\mathcal L\mathcal I}(c)$, we have the following proposition.
\begin{prop}\label{Johnson}
We have that $T_c^{2-2k}\in {\mathcal L\mathcal I}(c)$ and $T_c^{2-2k}$ is a product of commutators in ${\mathcal L\mathcal I}(c)$. 
\end{prop}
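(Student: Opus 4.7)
The plan is to deduce both parts by capping off $c$ with a disc. Let $\widehat{S}_L$ denote the closed genus-$k$ surface obtained from $S_L$ in this way. Every simple BP map on $S_L$ descends to a simple BP map on $\widehat{S}_L$, and the kernel of the induced map $\Mod(S_L,c)\to\Mod(\widehat{S}_L)$ is the infinite cyclic group generated by $T_c$. Since $k>2$, Johnson's generation theorem says that simple BP maps generate $\mathcal I(\widehat{S}_L)$, so the image of $\mathcal{L}\mathcal{I}(c)$ in $\mathcal I(\widehat{S}_L)$ is surjective and the problem reduces to identifying which powers of $T_c$ are captured by $\mathcal{L}\mathcal{I}(c)$ and to verifying that the relevant power lies in the commutator subgroup.

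For the first assertion my plan is to use the classical chain relation in $\Mod(S_L,c)$: for a chain of curves $c_1,\ldots,c_{2k}\subset S_L$ whose regular neighborhood is all of $S_L$, one has $(T_{c_1}\cdots T_{c_{2k}})^{4k+2}=T_c$. The individual factors $T_{c_i}$ are not in the Torelli group, but they can be iteratively regrouped into simple BP maps $T_{c_i}T_{c_j}^{-1}$ by inserting compensating Dehn twists, which can in turn be absorbed by further BP relations supported in $S_L$. A careful bookkeeping --- using that these compensations become trivial after capping to $\mathcal I(\widehat{S}_L)$ --- yields an explicit word in simple BP maps equal to $T_c^{2-2k}$. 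The exponent $2-2k=\chi(\widehat{S}_L)$ appears naturally from the Euler characteristic counting involved in the cancellation.

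For the second assertion it suffices to show that $T_c^{2-2k}$ is trivial in the abelianization of $\mathcal{L}\mathcal{I}(c)$. Applying the five-term exact sequence to the central extension
\[
1\to\langle T_c\rangle\cap\mathcal{L}\mathcal{I}(c)\to\mathcal{L}\mathcal{I}(c)\to\mathcal I(\widehat{S}_L)\to 1
\]
reduces the question to Johnson's computation of $H_1(\mathcal I(\widehat{S}_L))$, which is the direct sum of the Johnson image $\Lambda^3 H/H$ and a $2$-torsion Birman-Craggs-Johnson summand. The separating twist $T_c$ lies in the Johnson kernel, so its image in the first summand vanishes; and since $2-2k$ is even, the image of $T_c^{2-2k}$ in the second summand vanishes as well. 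Combined with the explicit word from Step 1, this forces $T_c^{2-2k}$ to be a product of commutators of simple BP maps in $\mathcal{L}\mathcal{I}(c)$.

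The main obstacle is the systematic bookkeeping in the first step: the naive regrouping of the chain-relation Dehn twists into simple BP maps produces extraneous single-twist factors, and these must be iteratively eliminated using further chain and lantern relations inside $S_L$. The precise exponent $2-2k$ emerges only after this cancellation is completed, which reflects the topology of the capped-off surface $\widehat{S}_L$. Step 2 is comparatively formal once the explicit expression from Step 1 is in hand, being a direct consequence of Johnson's classical abelianization theorem together with the five-term exact sequence.
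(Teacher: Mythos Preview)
Your proposal has genuine gaps in both steps.

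\textbf{Step 1.} The chain relation you quote produces $T_c$ to the first power, not $T_c^{2-2k}$. You then assert that the positive twists $T_{c_i}$ can be ``iteratively regrouped'' into simple BP maps, with the exponent $2-2k$ emerging from ``careful bookkeeping''. But no mechanism is given: inserting compensating $T_{c_j}^{-1}T_{c_j}$ pairs and commuting factors does not by itself convert a word in positive twists into a word in BP maps, and you never explain where the factor $2-2k$ actually comes from. As written this is a statement of hope, not a proof. (Knowing that $\mathcal{LI}(c)$ surjects onto $\mathcal I(\widehat S_L)$ only tells you that $\mathcal{LI}(c)\cap\langle T_c\rangle$ is some subgroup of $\langle T_c\rangle$; identifying which subgroup is precisely the content of the proposition.)

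\textbf{Step 2.} The five-term exact sequence for your central extension reads
\[
H_2(\mathcal I(\widehat S_L))\longrightarrow \langle T_c\rangle\cap\mathcal{LI}(c)\longrightarrow H_1(\mathcal{LI}(c))\longrightarrow H_1(\mathcal I(\widehat S_L))\longrightarrow 0,
\]
so showing that $T_c^{2-2k}$ dies in $H_1(\mathcal{LI}(c))$ amounts to showing it lies in the image of the connecting map from $H_2(\mathcal I(\widehat S_L))$. This requires the Euler class of the extension, not Johnson's computation of $H_1(\mathcal I(\widehat S_L))$. Your subsequent sentences about ``the image of $T_c$'' in the Johnson and Birman--Craggs--Johnson summands of $H_1(\mathcal I(\widehat S_L))$ are vacuous: $T_c$ lies in the kernel of $\mathcal{LI}(c)\to\mathcal I(\widehat S_L)$, so its image there is zero automatically, and this says nothing about its image in $H_1(\mathcal{LI}(c))$.

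\textbf{What the paper does.} The paper bypasses both difficulties by working not in $\mathcal{LI}(c)$ but in the much smaller disk-pushing subgroup. The Birman exact sequence identifies the kernel of $\Mod(S_k^1)\to\Mod(S_k)$ with $\pi_1(UTS_k)$, and $T_c$ is the push of the central fibre class $e$. The central extension $1\to\mathbb Z\to\pi_1(UTS_k)\to\pi_1(S_k)\to 1$ has Euler number $2-2k$, which immediately yields
\[
e^{2-2k}=[\widetilde a_1,\widetilde b_1]\cdots[\widetilde a_k,\widetilde b_k]
\]
for lifts of a standard generating set of $\pi_1(S_k)$. Applying Push gives $T_c^{2-2k}$ as a product of commutators of point-pushes; each such push is a BP map up to a central power of $T_c$ (which disappears in commutators), and every BP map on $S_L$ factors as a product of simple BP maps. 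This establishes both assertions at once, with the exponent $2-2k$ arising transparently from the Euler number of the unit tangent bundle.
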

\begin{proof}
The Birman exact sequence for the mapping class group of $S_L$ fixing the boundary component has the following form
\[
1\to \pi_1(UTS_{k})\xrightarrow{\text{Push}} \Mod(S_{k}^1)\to \Mod(S_{k})\to 1.
\]
Here, $UTS_{k}$ denotes the \emph{unit tangent bundle} of $S_{k}$; i.e., the $S^1$-subbundle of the tangent bundle $TS_{k}$ consisting of unit-length tangent vectors (relative to an arbitrarily-chosen Riemannian metric). In this context, the kernel $\pi_1(UTS_{k})$ is known as the \emph{disk-pushing subgroup}. Let $e$ be the generator of the center of $\pi_1(UTS_{k})$, which satisfies that $T_c=\text{Push}(e)$. We have the following $\mathbb{Z}$-extension
\begin{equation}\label{extension}
1\to \mathbb{Z}\to \pi_1(UTS_{k})\to \pi_1(S_k)\to 1.
\end{equation}
For every central $\mathbb{Z}$-extension
\[
1\to \mathbb{Z}\to \widetilde{Q} \to Q\to 1,
\] there is an associated Euler number which is evaluated on
$H_2(Q;\mathbb{Z})$. The Euler number of \eqref{extension} is $2-2k$ on the generator of $H_2(\pi_1(S_k);\mathbb{Z})$. This means that for a standard generating set $a_1,b_1,...,a_{k},b_{k}$ of $\pi_1(S_k)$, their lifts $\widetilde{a_1},\widetilde{b_1},...,\widetilde{a_{k}},\widetilde{b_{k}}$ in $\pi_1(UTS_{k})$  satisfies the following:
\[
e^{2-2k}=[\widetilde{a_1},\widetilde{b_1}]...[\widetilde{a_{k}},\widetilde{b_{k}}].
\]
Therefore we have the relation
\[
T_c^{2-2k}=[\text{Push}(\widetilde{a_1}),\text{Push}(\widetilde{b_1})]...[\text{Push}(\widetilde{a_{k}}),\text{Push}(\widetilde{b_{k}})].
\]
Up to multiplying a power of $T_c$, the map $\text{Push}(\widetilde{a_i})$ or $\text{Push}(\widetilde{b_i})$ is a single BP map (see \cite[Fact 4.7]{FM}). Any BP map $T_{a}T_b^{-1}$ on $S_L$ is a product of simple BP maps since we can find simple closed curves $c_0=a,...,c_{k+1}=b$ such that $c_i,c_{i+1}$ bounds a genus $1$ subsurface. Then 
\[
T_{a}T_b^{-1}=\Pi_{i=0}^{k}T_{c_i}T_{c_{i+1}}^{-1}\]
Thus $T_c^{2-2k}$ can be written as a product of simple BP maps. 
\end{proof}
In this paper, we choose $k=4$ for the rest of the paper. The reason for this is that we need some room for the existence of pseudo-Anosov Torelli elements in ${\mathcal L\mathcal I}(c)$, which is essential for applying the minimal decomposition theory.

\section{Characteristic annuli and Rotation numbers}

\subsection{Minimal decomposition for a realization}
From now on, we work with the assumption that there exists a realization of the Torelli group 
\[
\mathcal{E}: {\mathcal I}(S_g)\to \Homeo^a_+(S_g).
\]
For an element $f\in \mathcal{I}(S_g)$, or a subgroup $F<\mathcal{I}(S_g)$, we shorten $\mathbf{A}(\mathcal{E}(f))$ as $\mathbf{A}(f)$, and $\mathbf{A}(\mathcal{E}(F))$ as $\mathbf{A}(F)$, to denote the corresponding collections of acyclic components. Recall that $c\subset S_g$ is a fixed simple closed curve that divides $S_g$ into subsurfaces $S_L$ and $S_R$ so that $S_L$ has genus $4$ (see the definition in the previous section). We have the following theorem about the minimal decompositions of $\mathcal{E}(T_c^{-6})$.

\vskip .3cm

\begin{thm}\label{minimal}
The set $\mathbf{A}(T_c^{-6})$ has a component $\mathbf{L}(c)$ which is homotopic to $S_L$ and a component $\mathbf{R}(c)$ homotopic to $S_R$.
\end{thm}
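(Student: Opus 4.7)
The plan is to locate $\mathbf{L}(c)$ and $\mathbf{R}(c)$ as connected components of the minimal decomposition $\mathbf{A}(G)$ for larger subgroups $G<{\mathcal I}(S_g)$ containing $T_c^{-6}$, and then to descend them to components of $\mathbf{A}(T_c^{-6})$ via Lemma \ref{finer}. For the left side, Proposition \ref{Johnson} applied with $k=4$ gives $T_c^{-6}\in{\mathcal L\mathcal I}(c)$, so Lemma \ref{finer} yields the inclusion $\mathbf{A}({\mathcal L\mathcal I}(c))\subset\mathbf{A}(T_c^{-6})$ of collections of acyclic pieces. For the right side, let ${\mathcal R\mathcal I}(c)<{\mathcal I}(S_g)$ be the analogous subgroup generated by simple BP maps on $S_R$; Proposition \ref{Johnson} applied to $S_R$ (whose genus $g-4$ is at least $2$ since $g\ge 6$) places some nontrivial power of $T_c$ inside ${\mathcal R\mathcal I}(c)$, and since $T_c$ commutes as a mapping class with every element of ${\mathcal R\mathcal I}(c)$, the subgroup $G_R:=\langle T_c,{\mathcal R\mathcal I}(c)\rangle<{\mathcal I}(S_g)$ contains $T_c^{-6}$ and satisfies $\mathbf{A}(G_R)\subset\mathbf{A}(T_c^{-6})$ by Lemma \ref{finer}.

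The main step is to apply the pseudo-Anosov extension of Markovic's minimal decomposition theorem — the first ingredient announced in the introduction — separately to $\mathcal{E}({\mathcal L\mathcal I}(c))$ and $\mathcal{E}(G_R)$. The choice $k=4$ is made precisely so that the subgroup generated by simple BP maps on $S_L$ contains pseudo-Anosov classes whose support is all of $S_L$; the extended theorem then furnishes a connected component of $\mathbf{A}({\mathcal L\mathcal I}(c))$ (as a subset of $S_g$) homotopic to $S_L$, which we take to be $\mathbf{L}(c)$. The analogous assertion on the right — which requires $G_R$ to contain a pseudo-Anosov mapping class with support $S_R$, enabled by $g-4\ge 2$ — yields a connected component of $\mathbf{A}(G_R)$ homotopic to $S_R$, which we take to be $\mathbf{R}(c)$.

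Finally, I need to upgrade these subsets of $S_g$ to full components of $\mathbf{A}(T_c^{-6})$. Both already lie inside $\mathbf{A}(T_c^{-6})$ as unions of acyclic pieces by the preparatory step. They are disjoint, sitting in opposite components of $S_g\setminus c$, and each has frontier isotopic to $c$. A strictly larger component of $\mathbf{A}(T_c^{-6})$ containing $\mathbf{L}(c)$ would have to cross $c$, and would therefore meet $\mathbf{R}(c)$ — impossible since components are pairwise disjoint, and symmetrically for $\mathbf{R}(c)$. The main obstacle is the pseudo-Anosov extension of Markovic's theorem used in the middle step: one must upgrade the Dehn-twist theory of \cite{Mar} so that, for a realization of a subgroup of $\Mod(S_g)$ containing a pA class with support on a subsurface $\Sigma$, the minimal decomposition has an acyclic-subsurface component homotopic to $\Sigma$. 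Once this input is in hand, the two-sided descent argument above concludes.
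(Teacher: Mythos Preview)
Your invocation of the pseudo-Anosov machinery has the direction reversed. The paper's version of Markovic's theory (Theorem~\ref{bigacyclic}) produces an admissible decomposition for an element (or group) $h$ that \emph{commutes} with a Torelli pseudo-Anosov $\phi$ supported on a genus $2$ subsurface $\Sigma$, and the conclusion is that the acyclic part covers $\Sigma$. In other words, the acyclic region appears where the group under study is \emph{trivial} as mapping classes, not where it contains a pA. Applied to ${\mathcal L\mathcal I}(c)$---which is the identity on $S_R$ and hence commutes with a pA on a genus $2$ subsurface of $S_R$---the machinery would yield a component of $\mathbf{A}({\mathcal L\mathcal I}(c))$ on the \emph{right} side, not one homotopic to $S_L$. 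There is no mechanism in the shadowing construction of Section~6 forcing $\mathbf{S}(G)$ to be acyclic on the support of a pA element \emph{inside} $G$: the semiconjugacy $\Theta$ intertwines $\widetilde{\mathcal F}$ with the tree map $\overline{\mathcal Z}$, so $\mathcal{F}$ \emph{permutes} rather than fixes the fibers $\Theta^{-1}(c,w)$, and the resulting decomposition is not admissible for $\phi$ itself.

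The paper avoids the detour through larger groups entirely. It applies Theorem~\ref{bigacyclic} directly to $T_c^{-6}$, which is the identity on both $S_L$ and $S_R$ and hence commutes with Torelli pA elements on genus $2$ subsurfaces of either side; this yields a component $W$ of $\mathbf{A}(T_c^{-6})$ containing such a genus $2$ piece. The real work is then an analysis of the ends of $W$: every end must be homotopic to $c$, as one sees by testing against separating Dehn twists $T_\delta\in\mathcal I(S_g)$ with $\delta$ disjoint from $c$ (these commute with $T_c^{-6}$, hence permute the components of $\mathbf{A}(T_c^{-6})$), and at least one such end must appear since $T_c^{-6}$ is not isotopic to the identity. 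Your step~3 would in any case require this end analysis: the assertion that a strictly larger component ``would have to cross $c$ and therefore meet $\mathbf{R}(c)$'' is unjustified, since a priori its ends could be homotopic to curves inside $S_R$, and moreover your $\mathbf{L}(c)$ and $\mathbf{R}(c)$ come from minimal decompositions of two different groups and are not known to be disjoint.
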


\vskip .3cm

\begin{rem}We use the same argument as in \cite{Mar}. Since we are working with the Torelli group which contains no Anosov elements, we need to use pseudo-Anosov elements. The argument is almost the same as \cite{Mar}. For this reason, we postpone the proofs to Section 6. 
\end{rem}

\vskip .3cm

\subsection{Invariant annuli}

In the remainder of the paper we let
\[
\mathbf{B}=S_g-\mathbf{L}(c)-\mathbf{R}(c) .
\]
Since $\mathbf{L}(c)$ and $\mathbf{R}(c)$ are open (as subset of $S_g$), it follows that $\mathbf{B}$  is compact. Moreover,  $\mathbf{L}(c)$ and $\mathbf{R}(c)$ are disjoint, each having exactly one end (and this end is homotopic to $c$),  so it follows that  $\mathbf{B}$ is connected. By definition, we know that $\mathbf{B}$ is ${\mathcal E}({\mathcal L\mathcal I}(c))$-invariant since homeomorphisms from ${\mathcal E}({\mathcal L\mathcal I}(c))$ commute with $f$.

\vskip .3cm

To simplify the notation,  we set 
\[
f=\mathcal{E}(T_c^{-6}).
\]

\vskip .3cm

\begin{defn} We say that  $A\subset S_g$ is an invariant annulus if 
\begin{enumerate}
\item $A$ is an open annulus, homotopic to the curve $c$, 
\item $\mathbf{B}\subset A$,
\item $A$ is invariant under $f$.
\end{enumerate}
\end{defn}

\vskip .3cm

Next, we prove the  lemma which says  that the rotation numbers of points from $\mathbf{B}$ (under the action of $f$) do not depend on which invariant the annulus we use. 

\begin{lem}\label{notdepend}
Let $A_1,A_2$ be two invariant annuli. Then
\[
\rho(f,x,A_1)=\rho(f,x,A_2), \,\,\,\,\,\,\,\,\, x\in \mathbf{B}.
\]
\end{lem}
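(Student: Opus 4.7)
The plan is to show that the rotation number $\rho(f,x,A)$ is really a dynamical invariant of $f$ at $x$ and does not depend on the annular neighborhood chosen to parametrize it. The idea is to push both rotation numbers through a common strip model, write the change of parametrization as a $\mathbb{Z}$-periodic function that descends to a continuous function on a compact piece of a standard annulus, and then exploit the fact that the orbit $\{f^n(x)\}$ lies in the compact set $\mathbf{B}$ to kill the discrepancy in the Ces\`aro limit.

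First, I would reduce to the case $A_1\subset A_2$. Let $A_0$ be the connected component of $A_1\cap A_2$ containing $\mathbf{B}$. It is open, connected, $f$-invariant, and contains $\mathbf{B}$. Using the separator theory of Section~2 applied with $\mathbf{B}$ as a separator in each $A_i$, I would argue that $A_0$ is itself an open annulus homotopic to $c$, and hence an invariant annulus. Should this verification be awkward, a backup is to pass to the cyclic cover $\widehat{S}\to S_g$ corresponding to $\langle c\rangle\subset\pi_1(S_g)$; since $c$ is separating, a direct gluing argument identifies $\widehat{S}$ as an open annulus, into which both $A_1,A_2$ lift homeomorphically as sub-annuli. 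Either route reduces us to $A_1\subset A_2$.

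Assume now $A_1\subset A_2$. The inclusion is a $\pi_1$-isomorphism, since both annuli have core homotopic to $c$, so it lifts equivariantly to an embedding of universal covers. Composing with the conformal uniformizations $u_i:A_i\to N_i$ yields a holomorphic $\mathbb{Z}$-equivariant embedding $\chi:P_1\to P_2$ of strip models. The function
\[
\phi(z)=p_1(\chi(z))-p_1(z)
\]
satisfies $\phi(z+1)=\phi(z)$ and hence descends to a continuous function $\Phi$ on $N_1$, which is in particular bounded on the compact set $u_1(\mathbf{B})\subset N_1$. Choose lifts $\widetilde{g}_i$ of $g_i=u_i f u_i^{-1}$ with $\chi\circ\widetilde{g}_1=\widetilde{g}_2\circ\chi$. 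Then for any lift $\widetilde{y}\in P_1$ of $u_1(x)$, iteration gives
\[
p_1\bigl(\widetilde{g}_2^n(\chi(\widetilde{y}))\bigr)-p_1(\chi(\widetilde{y}))=\bigl(p_1(\widetilde{g}_1^n(\widetilde{y}))-p_1(\widetilde{y})\bigr)+\bigl(\Phi(u_1(f^n(x)))-\Phi(u_1(x))\bigr).
\]
Since $\{f^n(x)\}\subset\mathbf{B}$ and $\Phi$ is bounded on $u_1(\mathbf{B})$, dividing by $n$ and letting $n\to\infty$ kills the last term, so the two translation numbers coincide. Reducing mod $1$ gives $\rho(f,x,A_1)=\rho(f,x,A_2)$.

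The main obstacle is the structural reduction step: verifying that $A_0$ is genuinely an invariant annulus, or equivalently, setting up the cyclic cover model cleanly so that both $A_1$ and $A_2$ sit inside a common ambient open annulus with compatible lifts of $f$. Once this is in place, the analytic content---continuity of $\Phi$, compactness of $\mathbf{B}$, and vanishing of the Ces\`aro correction---is routine.
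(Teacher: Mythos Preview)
Your argument is correct and shares its analytic core with the paper's: in both, the two strip models for $A_1$ and $A_2$ are compared via a map that commutes with the deck translation $T$, so the discrepancy in the $p_1$-coordinate is $\mathbb{Z}$-periodic, hence bounded on $\pi^{-1}(\mathbf{B})$, and the Ces\`aro limits agree. The difference is only in how that comparison map is produced. The paper does \emph{not} reduce to the nested case $A_1\subset A_2$; instead it asserts directly that, since $\mathbf{B}$ is a compact separator in each open annulus, there is a homeomorphism $g\colon A_1\to A_2$ with $g|_{\mathbf{B}}=\mathrm{Id}$, lifts it to $\widetilde{g}\colon P_1\to P_2$, and uses $T$-equivariance of $\widetilde{g}$ to get the uniform bound $d(y,\widetilde{g}(y))<d_0$ on $\pi_1^{-1}(\mathbf{B})$. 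Your inclusion map $\chi$ plays exactly the role of the paper's $\widetilde{g}$, and your function $\phi=p_1\circ\chi-p_1$ is the explicit form of the paper's displacement bound.

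What each buys: the paper's route is shorter (no reduction step) but leans on the unproved existence of $g$; your route makes the periodicity of the correction completely transparent, at the cost of the structural reduction. Your own caveat about that reduction is well placed: the claim that the component $A_0$ of $A_1\cap A_2$ containing $\mathbf{B}$ is an annulus is genuinely not automatic (open subsets of an annulus can have free fundamental group of high rank), so the annular-cover backup is the safer path---though there you should also record why a single lift of $f$ to $\widehat{S}$ can be chosen to preserve the chosen lifts of both $A_i$ simultaneously (pick the lift fixing the common lift of $\mathbf{B}$).
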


\begin{proof} We let $\pi_i:P_i\to A_i$, $i=1,2$,  denote the universal cover, where $P_i$ is the infinite strip in the complex plane such that  $A_i$ is (as a Riemann surface) isomorphic to $P_i / \langle T \rangle$, where $T(x,y)=(x+1,y)$. The height of the  strip $P_i$ depends on the modulus of $A_i\subset S_g$. We let 
$$
\mathbf{B}_i=\pi^{-1}_i(\mathbf{B}).
$$

\vskip .3cm

Since $A_1$ and $A_2$ are  open annuli containing the compact set $\mathbf{B}$, there is a homeomorphism 
$g: A_1\to A_2$, such that $g|_\mathbf{B}=\text{Id}$. Choose a lift  $\widetilde{g}:P_1\to P_2$ of $g$. Then  $\widetilde{g}(\mathbf{B}_1)=\mathbf{B}_2$. Moreover, since both $P_1$ and $P_2$ live in the same complex plane, and  have the same group of deck transformations, and since $\widetilde{g}$ conjugates the deck transformation to itself, it follows that

\vskip .3cm

\begin{equation}\label{eq-1}
d\big(y,\widetilde{g}(y)\big)<d_0, \,\,\,\, \text{for every}\,\,\,\,\, y\in \mathbf{B}_1,
\end{equation}
for some constant $d_0>0$ (here $d$ stands for the Euclidean distance on $P_1$).

\vskip .3cm

Let $\widetilde{f}_1:P_1\to P_1$ be a lift of $f$ to $P_1$. We then choose $\widetilde{f}_2:P_2\to P_2$, a lift of $f$ to $P_2$, such that 
\begin{equation}\label{eq-2}
\widetilde{f}_2=\widetilde{g}\circ \widetilde{f}_1\circ  \widetilde{g}^{-1}, \,\,\,\,\,\,\,\, \text{on}\,\,\,\,\,\,\,\,\, \mathbf{B}_2.
\end{equation}

Recall the definition of  the rotation number of $f$ at $x\in A_i$
$$
\rho(f, x ,A_i)=\lim_{n\to \infty} (p_1(\widetilde{f}^n_i(\widetilde{x}_i))-p_1(\widetilde{x}_i))/n  \,\,\,\,\,\, \,   \,\,\,\,\,\, \, (\text{mod 1}),
$$
where $\widetilde{x}_i$ is a lift of $x$ to $P_i$. Replacing (\ref{eq-1}) and (\ref{eq-2}) into this definition shows that 
$\rho(f, x ,A_1)=\rho(f, x ,A_2)$. The lemma is proved.

\end{proof}

\subsection{Characteristic annuli and rotation numbers}

Let $p_L: \mathbf{L}(c)\to \mathbf{L}(c)/\sim$ and $p_R: \mathbf{R}(c)\to \mathbf{R}(c)/\sim$ be the Moore maps of $\mathbf{L}(c)$ and $\mathbf{R}(c)$ corresponding to the decomposition $\mathbf{S}(c)$. Let $\mathbf{L}\subset \mathbf{L}(c)/\sim$ be an open annulus bounded by the end of $\mathbf{L}(c)'$ on one side, and by a simple closed curve on the other. The open annulus $\mathbf{R}\subset \mathbf{R}(c)/\sim$ is defined similarly. We have the following definition (see \cite[Chapter 5]{Mar}). 

\begin{defn} An annulus of the form $A=p_L^{-1}(\mathbf{L})\cup \mathbf{B} \cup p_R^{-1}(\mathbf{R})$ is called  a \emph{characteristic annulus}.
\end{defn}

\vskip .3cm

Every characteristic annulus is an invariant annulus. We observe that $\mathbf{B}$ is a separator in $A$, that is, $\mathbf{B}$ is an essential, compact, and connected subset of $A$.  Note that a characteristic annulus $A$ is invariant under $f$, but it may not be invariant under homeomorphisms which are lifts 
(with respect to $\mathcal{E}$) of other elements from the Torelli group.  However,  $\mathbf{B}$ is invariant under these lifts  of elements from  ${\mathcal L\mathcal I}(c)\subset {\mathcal I}(S_g)$ (the subgroup generated by simple BP maps on the left subsurface $S_L$). As we see from the next lemma, the dynamical information about $f$ is contained in $\mathbf{B}$.

\vskip .3cm

\begin{lem}\label{gap} Fix a characteristic annulus $A$. Then
\begin{enumerate}
\item  every number $0<r<1$ appears as the rotation number $\rho(f,x,A)$, for some $x\in A$,
\item if  $0<\rho(f,x,A)<1$, then $x\in B$.
\end{enumerate}
\end{lem}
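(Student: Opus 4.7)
The plan is to analyze $f=\mathcal{E}(T_c^{-6})$ on $A$ by lifting it to a homeomorphism $\widetilde{f}:P\to P$ of the universal strip, and to exploit the fact that every point of $A\setminus \mathbf{B}$ lies in an acyclic element of the minimal decomposition of $f$, hence admits a compact lift to $P$.

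To prove part (2), I would start from a point $x\in A\setminus \mathbf{B}$, so that $x\in p_L^{-1}(\mathbf{L})\cup p_R^{-1}(\mathbf{R})$ and thus lies in some acyclic $E\in \mathbf{A}(T_c^{-6})$ inside $\mathbf{L}(c)\cup \mathbf{R}(c)$. By definition $E$ has a compact lift to $\widetilde{S_g}$; since $A\hookrightarrow S_g$ is $\pi_1$-injective with cyclic image $\langle c\rangle$, the inclusion $P\hookrightarrow \widetilde{S_g}$ carries a component of the preimage of $E$ in $P$ to a compact set $\widetilde{E}$. The invariance $f(E)=E$ lifts to $\widetilde{f}(\widetilde{E})=T^{k_E}(\widetilde{E})$ for a unique integer $k_E$, and iterating yields $p_1(\widetilde{f}^n(\widetilde{x}))-p_1(\widetilde{x})=nk_E+O(1)$ for every $\widetilde{x}\in \widetilde{E}$. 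Hence $\rho(\widetilde{f},\widetilde{x},P)=k_E\in \mathbb{Z}$ and $\rho(f,x,A)\equiv 0\pmod{1}$, contradicting $0<\rho(f,x,A)<1$.

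For part (1), I would next exhibit two specific integer translation numbers sitting in the translation set $R(\widetilde{f})$. Because $f$ is isotopic to $T_c^{-6}$ and $A$ is homotopic to $c$, the mapping class of $f|_A$ in $\pi_0(\Homeo(A))\cong \mathbb{Z}$ is $(-6)$; reading this on the cover $P$, the asymptotic translations at the two ends of $A$ for a fixed lift $\widetilde{f}$ differ by exactly $6$. Choosing an acyclic $E_L\subset p_L^{-1}(\mathbf{L})$ close to the outer end on the left and $E_R\subset p_R^{-1}(\mathbf{R})$ close to the outer end on the right, the compact-lift argument of the previous paragraph assigns them integer translation numbers $n_L=k_{E_L}$ and $n_R=k_{E_R}$; after replacing $\widetilde{f}$ by $T^j\circ \widetilde{f}$ I would arrange $n_L=0$ and $n_R=-6$, so that $\{0,-6\}\subset R(\widetilde{f})$. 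Handel's convexity theorem (Theorem~\ref{rotationproperty}) then forces $R(\widetilde{f})\supset [-6,0]$, and for every $r\in(0,1)$ the number $r-1\in(-1,0)$ is realized as a translation number of some lift $\widetilde{x}\in P$, giving $\rho(f,\pi(\widetilde{x}),A)=r$.

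The main obstacle I anticipate is the application of Handel's theorem, which is stated in Theorem~\ref{rotationproperty} for the closed annulus $N_c$ while our $A$ is open. I plan to bridge this by passing to a compact $f$-invariant sub-annulus of $A$ whose interior contains $E_L$, $\mathbf{B}$, and $E_R$, and applying Handel there; as a backup, the Franks--Le Calvez extension to annular continua (Theorem~\ref{PB}) realizes every rational in the rotation interval, and combined with a density/closedness argument this suffices. A secondary item to pin down carefully is the identification $n_L-n_R=\pm 6$, which reduces to unwinding the $T_c^{-6}$-twist of $f|_A$ directly on the covering strip $P$.
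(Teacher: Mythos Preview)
Your argument for part~(2) is correct and is essentially the paper's: the acyclic set $C(x)$ has compact lifts, $\widetilde f$ permutes them, hence the translation number is an integer.

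For part~(1) your strategy is right in spirit but has two real gaps, and the paper fills both with one device you are missing. First, the assertion that $\pi_0(\Homeo_+(A))\cong\mathbb{Z}$ is false for an \emph{open} annulus: every orientation- and end-preserving self-homeomorphism of an open annulus is isotopic to the identity, so there is no intrinsic ``twist number'' of $f|_A$ to read off. Consequently your claim that $n_L-n_R=\pm 6$ for the translation integers of two chosen acyclic sets $E_L,E_R$ has no justification as written. Second, your proposed fixes for applying Handel's theorem are not available: there is no reason a compact $f$-invariant sub-annulus of $A$ containing $\mathbf{B}$ should exist (the frontiers of $A$ in $S_g$ are typically wild continua, not circles), and Theorem~\ref{PB} concerns a fixed annular continuum with a known rotation interval, which does not by itself produce the interval $[-6,0]$.

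The paper resolves both issues simultaneously via a Moore collapse. One defines a new upper-semicontinuous decomposition $\mathbf{S}_{\mathrm{new}}$ equal to $\mathbf{S}(c)$ outside $A$ and to points inside $A$; the Moore map $p:S_g\to S_g'$ is then a homeomorphism on $A$, and the induced $f'$ is the identity outside $p(A)$ because $f$ fixes each element of $\mathbf{S}(c)$. The closure $A'_c$ of $p(A)$ in $S_g'$ is a genuine closed annulus with circle boundary, $f'|_{\partial A'_c}=\mathrm{id}$, and $f'$ is homotopic to $T_c^{-6}$ on $A'_c$. Now Handel's theorem applies to $A'_c$, and the boundary translation numbers are visibly integers differing by $6$, so the translation set contains $[-6,0]$. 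This is precisely the ``compactification with identity on the boundary'' that your sketch is groping for; without it neither the $\pm 6$ nor the passage to a closed annulus goes through.
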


\begin{proof}

The idea is show that the translation numbers of the restrictions of $f$ to the frontiers of $A$ differ by $6$. We then apply Handel's theorem.

Define a new upper-semicontinuous decomposition $\mathbf{S}_{new}$ of $S_g$ as follows. Outside of $A$, the decomposition consists of elements of 
$\mathbf{S}(c)$ (note that the outside of $A$ is contained in $\mathbf{A}(c)$). Inside of $A$ the decomposition $\mathbf{S}_{new}$ consists of points. By definition, this is an upper-semicontinuous decomposition which consists of acyclic components only.

Let $p: S_g\to S_g':=S_g/\sim$ be the Moore map of $\mathbf{S}_{new}$ as in Figure \ref{Moore}. By definition, $p|_{A}$ is a homeomorphism. The action of $f$ on $S_g$ is semi-conjugated by $p$ to a homeomorphism $f'$ on $S_g'$. Since $f$ preserves each components of $\mathbf{S}(c)$, we know that $f'|_{S_g'-p(A)}=id$. The action $f'$ on $p(A)$ is conjugate to $f$ on $A$ since $p|_{A}$ is a homeomorphism. Since $p|_{\mathbf{L}(c)}=p_L|_{\mathbf{L}(c)}$ and $p|_{\mathbf{R}(c)}=p_R|_{\mathbf{R}(c)}$, the boundary components of $p(A)$ in $S_g'$ are two simple closed curves  $\partial_L$ and $\partial_R$.

\begin{figure}[h]
\minipage{0.48\textwidth}
  \includegraphics[width=\linewidth]{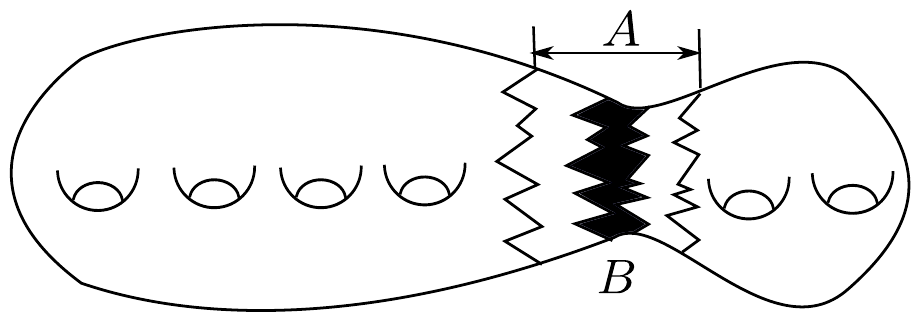}
\endminipage\hfill
  $\xrightarrow{p}$
\minipage{0.48\textwidth}
  \includegraphics[width=\linewidth]{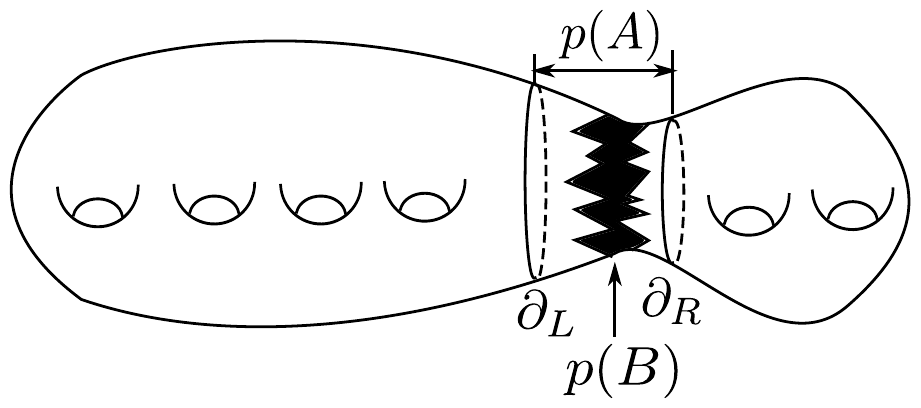}
\endminipage\hfill
\caption{the Moore map $p$ for $\mathbf{S}_{new}$}
\label{Moore}
\end{figure}

Recall that $\mathcal{E}$ is a realization. Thus, $f'$  is homotopic  to the standard Dehn twist map $T^{-6}_c$ on $S'_g$. Since $f'$ is identity map outside of $A'$,  the homeomorphism  $f'$ is homotopic to the standard Dehn twist map $T_c^{-6}$ on $A'$. By Handel's Theorem \ref{rotationproperty}, the collection of all translation numbers $\rho(\widetilde{f'},x,\widetilde{A'}_c)$ is a closed interval (here $\widetilde{f'}:P_c\to P_c$ is any lift of $f'$ to the infinite trip $P_c$ which is the universal cover of $A'$). Since this set contains points $0$ and $-6$, we conclude that every $0<r<1$ appears as the rotation number $\rho(f',x,A')$ for some $x\in A'$.  The statement (1) is proved.

For (2), recall that $A-\mathbf{B} \subset \mathbf{A}(c)$.  Let $C(x) \in \mathbf{A}(c)$ be the corresponding acyclic set that contains $x \in A-\mathbf{B}$. Denote by $\pi:P\to A$ the universal cover. Since $C(x)$ is acyclic we conclude that any connected component of  $\pi^{-1}\big(C(x)\big)$  is a compact set. Let $\widetilde{f}:P \to P$ be a lift of $f$. Since $f$ preserves the set $C(x)$, the homeomorphism  $\widetilde{f}$ permutes the connected components of $\pi^{-1}\big(C(x)\big)$. Therefore, the translation  number of $\widetilde{f}$ at points in $\pi^{-1}\big(C(x)\big)$ must be an integer (if it exists).
Therefore, the rotation number of $f$ at $x$ (if it exists) is $0$. This implies that $E_r\subset \mathbf{B}$.
\end{proof}

\vskip .3cm

\subsection{A special characteristic annulus $A_h$} \label{ahbh}
In this subsection, we define a special characteristic annulus $A_h$ with respect to a BP map $h$ and study its properties. Firstly, we have the following theorem about minimal decompositions of $\mathcal{E}(h)$ and $\mathcal{E}(\langle T_c^{-6},h\rangle)$ which will be proved in Section 6. Here $\langle T_c^{-6},h\rangle$ denotes the group generated by these two elements.

\begin{thm}\label{minimal-1}
For a simple BP map $h=T_aT_b^{-1}$, the set $\mathbf{A}(h)$ contains a component $\mathbf{R}(h)$ with two ends homotopic to $a,b$ respectively. Further more, the set $\mathbf{A}(\langle T_c^{-6},h\rangle)$ has a component $\mathbf{M}_1(c,h)$ with ends homotopic  to $a,b,c$ respectively and a component $\mathbf{M}_2(c,h)$ with ends homotopic  to $c$. 
\end{thm}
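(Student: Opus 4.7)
The plan is to follow the strategy of Theorem~\ref{minimal} and \cite[Section~5]{Mar}, replacing Anosov maps by pseudo-Anosov elements of $\mathcal{I}(S_g)$ supported on the relevant subsurfaces. First, I would construct three pseudo-Anosov Torelli elements: $\psi_{ab}$ supported on the genus-$(g-1)$ complement of the genus-one piece $G$ bounded by $a,b$ (commuting with $h = T_a T_b^{-1}$); $\psi_{abc}$ supported on $S_L\setminus G$, the genus-three subsurface with boundary $a\cup b\cup c$ (commuting with both $h$ and $T_c$); and $\psi_R$ supported on $S_R$ (again commuting with both $h$ and $T_c$). Each exists because the genus of the support is at least $2$ (using $g\ge 6$), so the Torelli group of the subsurface-with-boundary contains pseudo-Anosov elements, possibly after passing to a power; commutativity follows from disjointness of supports from collar neighborhoods of the fixed curves $a,b,c$.

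Next, for each $\psi$ above, I would apply the minimal decomposition analysis of \cite[Section~5]{Mar}, extended to the pseudo-Anosov case as indicated in the Remark after Theorem~\ref{minimal}: the invariant measured foliations of the pseudo-Anosov on its support play the role of the stable and unstable foliations in the Anosov case, and force the minimal decomposition $\mathbf{S}(\psi)$ to have a distinguished acyclic component homotopic to the support subsurface of $\psi$. Proposition~\ref{subsurface} then identifies this component as a proper subsurface with the correct ends. Using commutativity of each $\psi$ with the generators of $\langle T_c^{-6}, h\rangle$ (or $\langle h\rangle$ in the case of $\psi_{ab}$) and Lemma~\ref{finer} applied to $\langle T_c^{-6}, h, \psi\rangle$, I would then show that the support subsurface of $\psi$ persists as an acyclic component of $\mathbf{A}(\langle T_c^{-6},h\rangle)$ or $\mathbf{A}(h)$, respectively. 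This yields $\mathbf{R}(h)$ from $\psi_{ab}$, and $\mathbf{M}_1(c,h)$ and $\mathbf{M}_2(c,h)$ from $\psi_{abc}$ and $\psi_R$.

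The main technical obstacle will be the pseudo-Anosov version of Markovic's decomposition argument: unlike in the Anosov case, a pseudo-Anosov has finitely many singular points, and one has to verify that upper semi-continuity together with minimality force every admissible configuration of acyclic elements inside the support to coalesce into a single component with the predicted homotopy type, rather than splitting off spurious pieces around the singular set. A secondary difficulty is bookkeeping with the three separate pseudo-Anosov elements so that the joint constraints they impose on $\mathbf{A}(\langle T_c^{-6},h\rangle)$ produce exactly the two named components, with no extra pieces; this should follow from the fact that $G$, $S_L\setminus G$, $S_R$, and tubular neighborhoods of $a,b,c$ tile $S_g$, combined with a case analysis eliminating any alternative configuration consistent with Proposition~\ref{subsurface}.
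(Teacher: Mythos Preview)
Your overall strategy is close in spirit to the paper's, but there are two genuine gaps.

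First, the route through Lemma~\ref{finer} and the group $\langle T_c^{-6},h,\psi\rangle$ is miswired. The shadowing decomposition built from $\mathcal{E}(\psi)$ is known to be admissible only for homeomorphisms that commute with $\mathcal{E}(\psi)$ \emph{and are homotopic to the identity on the support of $\psi$} (this is the hypothesis in the cited \cite[Lemma~4.14]{Mar}). That applies to $\mathcal{E}(T_c^{-6})$ and $\mathcal{E}(h)$, but not to $\mathcal{E}(\psi)$ itself, so you cannot pass through $\mathbf{A}(\langle T_c^{-6},h,\psi\rangle)$. The correct (and simpler) statement is that the shadowing decomposition of $\mathcal{E}(\psi)$ is directly admissible for $H=\langle T_c^{-6},h\rangle$, whence $\mathbf{A}(H)$ contains a subsurface isotopic to the support of $\psi$. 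No appeal to Lemma~\ref{finer} is needed. Relatedly, the paper only develops the shadowing/blow-up machinery for a pseudo-Anosov on a closed genus~$2$ surface with a single singular marked point (Theorem~\ref{bigacyclic}); your $\psi_{abc}$ and $\psi_{ab}$ live on subsurfaces with three and two boundary components, so you would need a multi-prong blow-up version. The paper deliberately avoids this by choosing a single genus~$2$ one-holed subsurface inside $S_L\setminus G$ disjoint from $a,b,c$.

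Second, and more seriously, the ``tiling plus case analysis'' you sketch does not pin down the ends. Knowing that $\mathbf{A}(H)$ contains subsurfaces isotopic to $S_L\setminus G$ and to $S_R$ only tells you that the complement of the relevant component $W$ sits (up to isotopy) inside $G$ together with annular neighborhoods of $a,b,c$; it does not rule out ends of $W$ landing on essential curves interior to the genus~$1$ piece $G$, nor does it separate $W$ into two components. The paper handles this with two arguments you do not mention: (i) for any separating curve $\delta$ disjoint from $a,b,c$, the Torelli element $T_\delta$ commutes with $H$, hence $\mathcal{E}(T_\delta)$ permutes the components of $\mathbf{A}(H)$; playing this against a nested sequence $\beta_n$ determining an end forces each end to be isotopic to one of $a,b,c$; and (ii) a Moore-map collapse shows that if, say, $a$ were \emph{not} an end, then the induced action $H'$ would be the identity on a region where $h=T_aT_b^{-1}$ is nontrivial, a contradiction. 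These two steps are what actually produce the three ends $a,b,c$ for $\mathbf{M}_1(c,h)$ (and the single end $c$ for $\mathbf{M}_2(c,h)$), and they are the substantive content your plan is missing.
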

\begin{figure}[H]
 \includegraphics[scale=1]{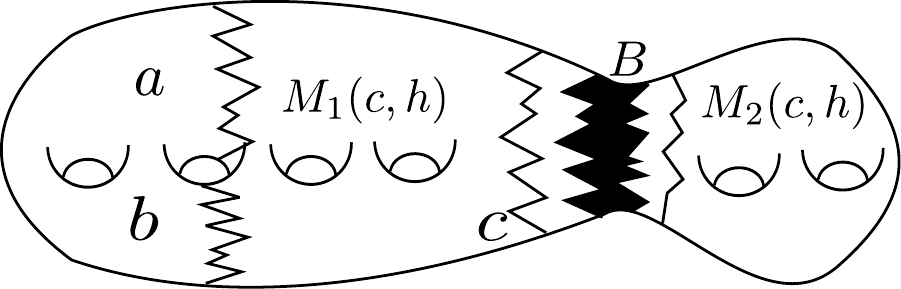}
 \caption{Location of $\mathbf{M}_1(c,h)$ and $\mathbf{M}_2(c,h)$}
\end{figure}

Let $\mathbf{M}(c,h)\subset S_g$ be the connected subsurface which contains $\mathbf{M}_1(c,h)$, which has  two ends, and which shares its two ends  
with $\mathbf{M}_1(c,h)$ (these are the two ends of $\mathbf{M}_1(c,h)$ homotopic to $a$ and $b$ respectively). 
We let \[
\mathbf{B_h}=\mathbf{M}(c,h)-\mathbf{M}_1(c,h)-\mathbf{M}_2(c,h).
\]

Since the decomposition of $f$ is finer than that of $\mathcal{E}(\langle T_c^{-6},h\rangle)$, we know that $\mathbf{M}_1(c,h)\subset \mathbf{L}(c)$ and $\mathbf{M}_1(c,h)\subset \mathbf{R}(c)$. This also implies that $\mathbf{B}\subset \mathbf{B_h}$. Similarly to how we defined the Moore map $p$ above, we define the Moore map $p_h:S_g\to S_g''=S_g/\sim$, with respect to  the new upper-semi continuous decomposition of $S_g$ defined as follows: on $\mathbf{M}_1(c,h) \cup \mathbf{M}_2(c,h)$ we use the (acyclic) components of  $\mathbf{S}(c)$, and on the rest of the surface $S_g$  each point is one component. We let   $A''$ be any sub-annulus of $p_h\big(\mathbf{M}(c,h)\big)$, bounded by two simple closed curve curves, and which  contains $p_h\big(\mathbf{B}_h\big)$.

Set \[A_h=p_h^{-1}(A'').\]

The minimal decomposition for $f$ is finer than the one of $\mathcal{E}(\langle T_c^{-6},h\rangle)$. The following claim and proposition follow from this observation.

\begin{claim} Each $A_h$  is a characteristic annulus. 
\end{claim}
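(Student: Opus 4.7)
The plan is to identify $A_h$ as a characteristic annulus by producing open sub-annuli $\mathbf{L} \subset \mathbf{L}(c)/\sim$ and $\mathbf{R} \subset \mathbf{R}(c)/\sim$ (each bounded by the corresponding end on one side and by a simple closed curve on the other) with $A_h = p_L^{-1}(\mathbf{L}) \cup \mathbf{B} \cup p_R^{-1}(\mathbf{R})$. The starting observation is that the Moore maps are compatible on the relevant pieces: since the minimal decomposition $\mathbf{S}(\langle T_c^{-6}, h\rangle)$ is coarser than $\mathbf{S}(c)$ (Lemma \ref{finer}), each of $\mathbf{M}_1$ and $\mathbf{M}_2$ is a union of $\mathbf{S}(c)$-components. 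Consequently $p_h|_{\mathbf{M}_1}$ coincides with $p_L|_{\mathbf{M}_1}$ and $p_h|_{\mathbf{M}_2}$ with $p_R|_{\mathbf{M}_2}$ after the canonical identification of their images with subsets of $\mathbf{L}(c)/\sim$ and $\mathbf{R}(c)/\sim$.

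The containment $\mathbf{B} \subset A_h$ is immediate: from $\mathbf{B} \subset \mathbf{B}_h$ and the fact that $p_h$ restricts to the identity on $\mathbf{B}$, we have $\mathbf{B} = p_h(\mathbf{B}) \subset p_h(\mathbf{B}_h) \subset A''$, hence $\mathbf{B} \subset p_h^{-1}(A'') = A_h$.

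For the left piece, I would first establish $A_h \subset \mathbf{M}(c,h)$: if $x \in A_h$ then $p_h(x) \in A'' \subset p_h(\mathbf{M}(c,h))$, and since $p_h$ is injective off $\mathbf{M}_1 \cup \mathbf{M}_2 \subset \mathbf{M}(c,h)$, in either case $x$ must lie in $\mathbf{M}(c,h)$. Combined with $\mathbf{M}(c,h) \cap \mathbf{L}(c) = \mathbf{M}_1$, this gives $A_h \cap \mathbf{L}(c) \subset \mathbf{M}_1$. Setting $\mathbf{L} := A'' \cap p_h(\mathbf{M}_1)$ and using the compatibility $p_h|_{\mathbf{M}_1} = p_L|_{\mathbf{M}_1}$, we obtain $p_L^{-1}(\mathbf{L}) = A_h \cap \mathbf{M}_1 = A_h \cap \mathbf{L}(c)$. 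The set $\mathbf{L}$ is an open sub-annulus of $\mathbf{L}(c)/\sim$ bounded on one side by the end homotopic to $c$ and on the other by the boundary circle of $A''$ lying in $p_h(\mathbf{M}_1)$ (a simple closed curve in $\mathbf{L}(c)/\sim$). A symmetric argument on $\mathbf{R}(c)$ produces $\mathbf{R}$ with $A_h \cap \mathbf{R}(c) = p_R^{-1}(\mathbf{R})$. Putting the three pieces together along $S_g = \mathbf{L}(c) \sqcup \mathbf{B} \sqcup \mathbf{R}(c)$ gives the desired presentation of $A_h$.

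The main obstacle is the identity $\mathbf{M}(c,h) \cap \mathbf{L}(c) = \mathbf{M}_1$ (and its $\mathbf{R}(c)$-analogue), which requires a careful description of the subsurface $\mathbf{M}(c,h)$. The natural choice is the minimal connected subsurface containing $\mathbf{M}_1$ and sharing its two $a, b$-ends, in which case $\mathbf{M}(c,h) = \mathbf{M}_1 \cup \mathbf{B} \cup \mathbf{M}_2$ and $\mathbf{B}_h = \mathbf{B}$; the claimed identities then follow, and the argument concludes cleanly.
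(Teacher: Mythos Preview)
Your approach is more hands-on than the paper's: you try to explicitly exhibit the sub-annuli $\mathbf{L}$ and $\mathbf{R}$, whereas the paper simply observes that the Moore map $p$ (collapsing $\mathbf{S}(c)$-components on all of $\mathbf{L}(c)\cup\mathbf{R}(c)$) factors as $p=\xi\circ p_h$, sets $A'=\xi(A'')$, and concludes $A_h=p^{-1}(A')$. The factorization holds because $\mathbf{M}_1(c,h)\cup\mathbf{M}_2(c,h)\subset\mathbf{L}(c)\cup\mathbf{R}(c)$, so the decomposition defining $p_h$ refines the one defining $p$; this one-line observation replaces your entire construction of $\mathbf{L}$ and $\mathbf{R}$.

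There is a genuine gap in your argument. You correctly isolate the crux as the identity $\mathbf{M}(c,h)\cap\mathbf{L}(c)=\mathbf{M}_1(c,h)$, but your resolution --- asserting that $\mathbf{M}(c,h)=\mathbf{M}_1\cup\mathbf{B}\cup\mathbf{M}_2$ and hence $\mathbf{B}_h=\mathbf{B}$ --- is unjustified and very likely false. The subsurface $\mathbf{M}(c,h)$ is \emph{defined} in the paper (not chosen by you) as the connected subsurface containing $\mathbf{M}_1$ and sharing exactly its $a$- and $b$-ends; this means $\mathbf{M}(c,h)$ is obtained from $\mathbf{M}_1$ by filling in everything across the $c$-end of $\mathbf{M}_1$. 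There is no reason the $c$-end of $\mathbf{M}_1$ (a component of $\mathbf{A}(\langle T_c^{-6},h\rangle)$) should coincide with the end of $\mathbf{L}(c)$ (a component of $\mathbf{A}(T_c^{-6})$); Lemma~\ref{finer} only gives $\mathbf{M}_1\subset\mathbf{L}(c)$, and Proposition~\ref{prop-novo} itself only asserts $\mathbf{B}\subset\mathbf{B}_h$, not equality. If the $c$-frontier of $\mathbf{M}_1$ lies strictly inside $\mathbf{L}(c)$, then $\mathbf{B}_h\cap\mathbf{L}(c)\neq\emptyset$ and your equation $A_h\cap\mathbf{L}(c)=A_h\cap\mathbf{M}_1$ fails.

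The paper's factorization argument avoids this pitfall: it never needs to compare the $c$-end of $\mathbf{M}_1$ with the end of $\mathbf{L}(c)$, only to know that every $p_h$-fiber lies inside a $p$-fiber. That is what the observation ``the minimal decomposition for $f$ is finer than the one of $\mathcal{E}(\langle T_c^{-6},h\rangle)$'' buys you.
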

\begin{rem}
We call $A_h$ a special characteristic annulus with respect to $h$.
\end{rem}

\begin{proof} The quotient map $p:S_g\to S_g'$ factors through the quotient map $p_h:S_g\to S_g''$. That is, $p=\xi\circ p_h$, where $\xi:S_g''\to S_g'$ is another quotient map. Each $A_h$ is given by $A_h=p_h^{-1}(A'')$. We let $A'=\xi(A'')$, and observe that  $A_h=p^{-1}(A')$. This proves the claim.
\end{proof}

\vskip .3cm

\begin{prop}\label{prop-novo} Fix a simple BP map $h$, and a special characteristic annulus $A_h$.  Then

\begin{enumerate}

\item  $\mathbf{B}\subset \mathbf{B_h}$,

\vskip .3cm

\item $\mathbf{B_h}\subset \mathbf{A}(h)$,

\vskip .3cm

\item $C(x)\subset \mathbf{B}_h$, for every $x\in \mathbf{B}_h$, where  $C(x)\in \mathbf{A}(h)$ is the corresponding acyclic component containing $x$.

\end{enumerate}

\end{prop}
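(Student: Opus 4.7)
The plan is to derive the three parts from the refinement Lemma \ref{finer}, which supplies both $\mathbf{A}(\langle T_c^{-6}, h\rangle) \subset \mathbf{A}(f)$ and $\mathbf{A}(\langle T_c^{-6}, h\rangle) \subset \mathbf{A}(h)$, combined with the end structure of the distinguished acyclic components provided by Theorems \ref{minimal} and \ref{minimal-1}.

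For (1), I first use $\mathbf{A}(\langle T_c^{-6}, h\rangle) \subset \mathbf{A}(f)$ and connectedness to place each $\mathbf{M}_i(c,h)$ inside a single connected component of $\mathbf{A}(f)$. Matching ends --- $\mathbf{M}_1(c,h)$ has ends $a, b, c$ all living on the left while $\mathbf{L}(c) \simeq S_L$ has a single end at $c$ by Theorem \ref{minimal}, and symmetrically on the right --- forces $\mathbf{M}_1(c,h) \subset \mathbf{L}(c)$ and $\mathbf{M}_2(c,h) \subset \mathbf{R}(c)$. Topologically, $\mathbf{B} \subset \mathbf{M}(c,h)$ since $\mathbf{B}$ is a neighborhood of $c$ while $\mathbf{M}(c,h)$ is precisely the $c$-side of the bounding pair $\{a, b\}$; combining with $\mathbf{B}\cap(\mathbf{L}(c)\cup\mathbf{R}(c))=\emptyset$ yields
\[
\mathbf{B} \subset \mathbf{M}(c,h)\setminus(\mathbf{L}(c)\cup\mathbf{R}(c)) \subset \mathbf{M}(c,h)\setminus(\mathbf{M}_1(c,h)\cup\mathbf{M}_2(c,h)) = \mathbf{B}_h.
\]

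For (2), I plan to first establish $\mathbf{M}(c,h) \subset \mathbf{A}(\langle T_c^{-6}, h\rangle)$: every point on the $c$-side subsurface lies in an acyclic element of the decomposition of $\mathcal{E}(\langle T_c^{-6}, h\rangle)$. The justification will be that Theorem \ref{minimal-1} confines the non-acyclic behavior of this group to the genus-$1$ side $\Sigma$ of the bounding pair $\{a, b\}$ (where the pseudo-Anosov-type dynamics of $h$ concentrates), while on the $c$-side the twist behavior of $T_c^{-6}$ is already packaged within the acyclic components $\mathbf{M}_1(c,h), \mathbf{M}_2(c,h)$. Granting this inclusion, the second refinement yields $\mathbf{B}_h \subset \mathbf{M}(c,h) \subset \mathbf{A}(\langle T_c^{-6}, h\rangle) \subset \mathbf{A}(h)$.

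For (3), I interpret $C(x) \in \mathbf{A}(h)$ as a single acyclic element of the decomposition $\mathbf{S}(\mathcal{E}(h))$ through $x$. Since $\mathbf{S}(\mathcal{E}(h))$ refines $\mathbf{S}(\mathcal{E}(\langle T_c^{-6}, h\rangle))$, the element $C(x)$ sits inside some element of the coarser decomposition through $x$. By (2) together with the description of $\mathbf{B}_h$ as the complement of $\mathbf{M}_1 \cup \mathbf{M}_2$ inside $\mathbf{M}(c,h)$, that coarser element lies entirely in $\mathbf{B}_h$, and hence so does $C(x)$. The hardest step of the proof is the auxiliary inclusion $\mathbf{M}(c,h) \subset \mathbf{A}(\langle T_c^{-6}, h\rangle)$ used in (2): certifying that the $c$-side is entirely acyclic for the two-generator group will likely draw on the pseudo-Anosov minimal-decomposition arguments postponed to Section~6, mirroring the strategy of \cite{Mar}.
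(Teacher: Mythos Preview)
Your approach to (1) is fine and matches the paper's. The real problem is the auxiliary inclusion you stake everything on in (2):
\[
\mathbf{M}(c,h)\subset \mathbf{A}\big(\langle T_c^{-6},h\rangle\big).
\]
This is false. The two-generator group contains $T_c^{-6}$, so its non-acyclic locus is \emph{not} confined to the genus-$1$ side of $\{a,b\}$; the twist $T_c^{-6}$ forces non-acyclic behaviour near $c$ as well. Concretely, take a point $p$ on the $c$-frontier of $\mathbf{M}_1(c,h)$. Then $p\in\mathbf{B}_h$ (since $\mathbf{M}_1$ is open). If your inclusion held, $p$ would lie in some open component $\mathbf{M}'$ of $\mathbf{A}(\langle T_c^{-6},h\rangle)$; but any neighbourhood of $p$ meets $\mathbf{M}_1$, forcing $\mathbf{M}'=\mathbf{M}_1$, which contradicts $p\notin\mathbf{M}_1$. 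Equivalently, your inclusion together with (1) and Lemma~\ref{finer} would give $\mathbf{B}\subset\mathbf{A}(T_c^{-6})$, hence $\mathbf{A}(T_c^{-6})=S_g$, contradicting that $T_c^{-6}$ is not isotopic to the identity. So the ``hardest step'' you flag is not merely hard---it is the wrong target.

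The paper's route for (2) avoids the two-generator group entirely and uses the \emph{single} generator $h$: from Theorem~\ref{minimal-1} the component $\mathbf{R}(h)\subset\mathbf{A}(h)$ has ends only at $a,b$, and since $\mathbf{M}_1(c,h)\subset\mathbf{R}(h)$ by Lemma~\ref{finer}, one gets $\mathbf{M}(c,h)\subset\mathbf{R}(h)$, hence $\mathbf{B}_h\subset\mathbf{A}(h)$. Your argument for (3) then also needs to change, because it rests on the same false inclusion. The paper instead argues by contradiction: if $C(x)\not\subset\mathbf{B}_h$, connectedness forces $C(x)$ to meet $\mathbf{M}_1(c,h)\cup\mathbf{M}_2(c,h)$ (these separate $\mathbf{B}_h$ from the rest of $S_g$); picking $y$ in this intersection and using that $\mathbf{S}(h)$ refines $\mathbf{S}(\langle T_c^{-6},h\rangle)$ gives $C(x)=C(y)\subset D(y)\subset\mathbf{M}_1\cup\mathbf{M}_2$, so $x\notin\mathbf{B}_h$, a contradiction.
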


\begin{proof} The proof of (1) is very similar to the proof of the previous claim and we leave it to the reader. To prove (2) we recall the set $\mathbf{R}(h)$ from Theorem \ref{minimal-1}.  Since the minimal decomposition of  $\mathcal{E}(h)$ is finer than the one of $\mathcal{E}(\langle T_c^{-6},h\rangle)$, it follows that $\mathbf{M}(c,h)\subset \mathbf{R}(h)$. Together with  $\mathbf{B}_h\subset \mathbf{M}(c,h)$, this yields (2).

It remains to prove (3). Let $x\in \mathbf{B_h}$. Then by  (2) of this proposition we know there exists  $C(x)\in \mathbf{A}(h)$  containing $x$.
We argue by contradiction. Suppose that $C(x)$ is not contained in $\mathbf{B}_h$. Then there exists $y\in C(x)$ such that $y\in \mathbf{M}(c,h) - \mathbf{B}_h$. But then $y$ belongs to an acyclic component $D(y)\in \mathbf{A}(c,h)$. However, the minimal decomposition of  $\mathcal{E}(h)$ is finer than the one of $\mathcal{E}(\langle T_c^{-6},h\rangle)$, which  implies that $C(x)=C(y)\subset D(y)$. This means that $x\in D(y)$, and thus $x\in \mathbf{A}(c,h)$.
But by the definition we know that $\mathbf{B}_h\cap \mathbf{A}(c,h)=\emptyset$. This contradiction proves the proposition.

\end{proof}

\section{The proof of  Theorem \ref{main}}
In this section, we prove Theorem \ref{main} which states that the natural projection $p_g^a: \Homeo_+^a(S_g)\to \Mod(S_g)$ has no section over  ${\mathcal I}(S_g)$. Again, assume that $\mathcal{E}:{\mathcal I}(S_g)\to  \Homeo_+^a(S_g)$ is a section of $p_g^a$. Equip $S_g$ with a Riemann surface structure.
\subsection{Outline of the proof}
Recall that $c$ is a separating simple closed curve that divides the surface $S_g$ into a genus $4$ subsurface and a genus $g-4$ subsurface.  Fix a characteristic annulus $A$. Let $E_r$ be the set of points in $A$ that have rotation numbers equal to $r$ under $\mathcal{E}(T^{-6}_c)$. Lemma \ref{gap} states that the set $E_r$ is not empty when $0<r<1$.

The key observation of the proof lies in the analysis of connected components of $E_r$. Let $E$ be a component of $E_r$. We show the following:

\begin{enumerate}

\item $E$ is $\mathcal{E}(h)$-invariant for every simple BP map $h$

\vskip .3cm

\item $\overline{E}$ is a separator in $A$,

\vskip .3cm

\item if $E$ contains a periodic orbit, then $E$ contains  a separator. 

\end{enumerate}

Denote by $K(\overline{E})$  the annular completion of $\overline{E}$, and  let $\rho(\mathcal{E}(T^{-6}_c), K(\overline{E}))$ be the rotation interval of $K(\overline{E})$. We claim that $\rho(K(\overline{E}))=\{r\}$.  First of all, we know that  $r\in \rho(\mathcal{E}(T^{-6}_c), K(\overline{E}))$. If $\rho(\mathcal{E}(T^{-6}_c), K(\overline{E}))\neq \{r\}$, then $\rho(\mathcal{E}(T^{-6}_c), K(\overline{E}))$ contains infinitely many rational numbers. By Theorem \ref{PB}, there exist three periodic points $x_1,x_2,x_3\in K(\overline{E})$ with different rational rotation numbers $r_1,r_2,r_3$. Let $F_i$ denote the connected component of $E_{r_{i}}$ containing $r_i$, and let $M_i\subset F_i$ be a separator. 

By Proposition \ref{ordering}, there is an ordering on disjoint separators. Without loss of generality, we assume that $M_1<M_2<M_3$. Based on a discussion about the position $E$ with respect to $M_i$'s, we obtain a contradiction.
Thus,  $\rho(\mathcal{E}(T^{-6}_c), K(E))$ is the singleton $\{r\}$. 

We know from Theorem \ref{Matsumoto} that the left and right prime ends rotation numbers of $K(\overline{E})$ are both $r$. But in the group of circle homeomorphisms, the centralizer of an irrational rotation is essentially an abelian group. This contradicts the fact that a power of $\mathcal{E}(T^{-6}_c)$ is a product of commutators in its centralizer as in Proposition \ref{Johnson}.

\subsection{The set $E_r$} Once again we use abbreviation $f=\mathcal{E}(T^{-6}_c)$. For a characteristic annulus $A$, we let 
$$
E_r=\{x\in A: \rho\big(f,x,A\big)=r\}. 
$$
By Lemma \ref{notdepend}, we know that the definition of $E_r$ does not depend on the choice of the characteristic annulus. By Lemma \ref{gap}, if $0<r<1$, we know that $E_r$ is nonempty and $E_r\subset \mathbf{B}$.

\vskip .3cm

Next, we prove the following key lemmas.

\begin{lem}\label{invariant}
Fix $0<r<1$, and let $E$ denote a connected component  of $E_r$. Fix a simple BP map $h$. For $x\in E$, let $C(x) \in \mathbf{A}(h)$ be the corresponding acyclic set. Then $C(x)\subset E$.  In particular,  $E$  is ${\mathcal L\mathcal I}(c)$-invariant.
\end{lem}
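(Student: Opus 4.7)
The heart of the lemma is to show $C(x) \subset E_r$: once this is proven, the connectedness of $C(x)$ together with $x \in E \cap C(x)$ forces $C(x) \subset E$, and the ${\mathcal L\mathcal I}(c)$-invariance follows by applying the $\mathcal{E}(h)$-invariance of $C(x)$ to every point of $E$ and varying $h$ over simple BP maps. The strategy is to pass to a Moore quotient that collapses exactly the acyclic components of $\mathbf{A}(h)$, use the commutation of $f = \mathcal{E}(T_c^{-6})$ with $g = \mathcal{E}(h)$ (which holds because $h = T_a T_b^{-1}$ is supported in $S_L$, disjoint from $c$, so $[h,T_c^{-6}] = 1$ in ${\mathcal I}(S_g)$ and $\mathcal{E}$ is a homomorphism), and then show that the rotation number is preserved by the resulting semi-conjugacy.

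The first preliminary step is that $f$ permutes the elements of $\mathbf{S}(h)$. For any $S \in \mathbf{S}(h)$, the image $f(S)$ is connected, non-separating, and $g$-invariant (using $fg = gf$), so $\{f(S) : S \in \mathbf{S}(h)\}$ is another admissible decomposition for $\langle g \rangle$; applying the minimality of $\mathbf{S}(h)$ to both $f \cdot \mathbf{S}(h)$ and $f^{-1} \cdot \mathbf{S}(h)$ gives $f \cdot \mathbf{S}(h) = \mathbf{S}(h)$, and in particular $f$ permutes the components of $\mathbf{A}(h)$. Now apply Moore's theorem (Theorem \ref{moore}) to the upper semi-continuous decomposition of $S_g$ whose elements are the components of $\mathbf{A}(h)$ together with singletons elsewhere: this produces a continuous map $\phi: S_g \to S_g$ homotopic to the identity with $\phi^{-1}(\phi(y)) = C(y)$ for $y \in \mathbf{A}(h)$. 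Since $f$ permutes these equivalence classes, it descends to $\bar{f}: S_g \to S_g$ with $\bar{f} \circ \phi = \phi \circ f$, and $\bar{f}$ restricts to a homeomorphism of the annulus $\bar{A} = \phi(A)$.

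Now lift $\phi$ to the universal covers $\widetilde{\phi}: P \to \bar{P}$. Because $\phi$ is homotopic to the identity, $\widetilde{\phi}$ may be chosen to commute with the deck transformations and to lie at bounded Euclidean distance from the identity, so the horizontal drift $\psi(\widetilde{z}) := p_1(\widetilde{\phi}(\widetilde{z})) - p_1(\widetilde{z})$ is bounded on $P$. By Proposition \ref{prop-novo}, $C(x) \subset \mathbf{B}_h$, where $\mathbf{B}_h$ is compact and $f$-invariant. For any $y \in C(x)$, the $f$-orbit $\{f^n(y)\}$ stays in $\mathbf{B}_h$; picking lifts so that $\widetilde{\bar{f}} \circ \widetilde{\phi} = \widetilde{\phi} \circ \widetilde{f}$ and using the boundedness of $\psi$,
\[
\rho(\bar{f}, \phi(y), \bar{A}) = \lim_{n \to \infty} \frac{p_1(\widetilde{f}^n(\widetilde{y})) - p_1(\widetilde{y}) + O(1)}{n} = \rho(f, y, A).
\]
Since $\phi(y) = \phi(x)$ for all $y \in C(x)$, we conclude $\rho(f, y, A) = \rho(f, x, A) = r$, i.e.\ $C(x) \subset E_r$, and hence $C(x) \subset E$.

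For the ${\mathcal L\mathcal I}(c)$-invariance, observe that the non-acyclic part of $\mathbf{S}(h)$ (essentially $\mathbf{R}(h) \subset S_L$ between $a$ and $b$) is disjoint from $\mathbf{B} \supset E$, so every $x \in E$ does lie in some $C(x) \in \mathbf{A}(h)$; since $\mathcal{E}(h)$ preserves $C(x)$, we get $\mathcal{E}(h)(x) \in C(x) \subset E$, and applying the same argument to $h^{-1}$ yields $\mathcal{E}(h)(E) = E$. As simple BP maps generate ${\mathcal L\mathcal I}(c)$, $E$ is ${\mathcal L\mathcal I}(c)$-invariant. The main obstacle in the whole argument is the rotation-number identity via $\phi$: it demands both that $\psi$ be bounded (from $\phi \simeq \mathrm{id}$) \emph{and} that the $f$-orbit of $y$ stay in a compact set where this bound can be used (from $C(x) \subset \mathbf{B}_h$ and $\mathbf{B}_h$ being compact and $f$-invariant), so that the $O(1)$ discrepancy does not survive division by $n$.
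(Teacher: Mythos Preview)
Your overall strategy is sound and lands on the same mechanism as the paper: since $f=\mathcal{E}(T_c^{-6})$ commutes with $g=\mathcal{E}(h)$, it permutes the acyclic pieces $C(z)\in\mathbf{A}(h)$, and therefore any two points in the same $C(x)$ have the same rotation number. Your minimality argument that $f\cdot\mathbf{S}(h)=\mathbf{S}(h)$ is correct and is in fact something the paper uses without spelling out. The deduction of $\mathcal{LI}(c)$-invariance from $C(x)\subset E$ is also fine (though note a slip: $\mathbf{R}(h)$ is the large \emph{acyclic} component containing $\mathbf{B}$, not the non-acyclic genus-one piece between $a$ and $b$).

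Where your write-up has a genuine gap is the Moore-map step. The decomposition ``acyclic pieces of $\mathbf{S}(h)$ together with singletons on the complement'' need not be upper semi-continuous: a Hausdorff limit of acyclic pieces can be an arc sitting inside a non-acyclic element of $\mathbf{S}(h)$, which in your decomposition is broken into points. So Moore's theorem does not directly apply, and consequently neither $\bar A=\phi(A)$ being an annulus, nor the existence of a lift $\widetilde\phi:P\to\bar P$ with bounded horizontal drift, is justified as stated. These are the load-bearing facts in your displayed rotation-number identity.

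The paper sidesteps all of this by arguing directly inside the special characteristic annulus $A_h$, with no quotient at all. The point is that each $C(x)$, being acyclic, has compact lifts in the strip $\widetilde{A_h}$; upper semi-continuity of $\mathbf{A}(h)$ makes the diameter function $x\mapsto \operatorname{diam}(\text{lift of }C(x))$ upper semi-continuous, and compactness of $\mathbf{B}_h$ then gives a uniform bound $d_0$. Since $f$ permutes the $C(z)$'s, the iterates $\widetilde f^k(\widetilde x)$ and $\widetilde f^k(\widetilde y)$ lie in a common lifted acyclic piece for every $k$, hence stay within distance $d_0$, and the translation numbers coincide. This is exactly the ``$O(1)$ survives division by $n$'' step you isolate, obtained without constructing $\phi$. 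If you want to keep your semi-conjugacy packaging, you can rescue it by applying Moore's theorem only on the open subsurface $\mathbf{R}(h)$ (where the decomposition \emph{is} upper semi-continuous) rather than on all of $S_g$; but the direct diameter bound is shorter.
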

\begin{proof}

To prove that $E$ is ${\mathcal L\mathcal I}(c)$ invariant, we only need to show that $E$ is invariant under simple BP maps since simple BP maps generate ${\mathcal L\mathcal I}(c)$ by Proposition \ref{Johnson}. Let $h$ denote a fixed simple BP map.

Recall the special characteristic annulus $A_h$ and the set $\mathbf{B_h}$ that we defined in Section \ref{ahbh}. Then, for each $x\in \mathbf{B_h}$ we have $C(x)\subset \mathbf{B_h}$, where $C(x)\in \mathbf{A}(h)$ is the corresponding acyclic set (we can do this by Proposition \ref{prop-novo}).

\begin{claim}\label{uniformbound}
There exists $d_0>0$ with the following properties. For  $x\in \mathbf{B_h}$, we let $C(x)\in \mathbf{A}(h)$ denote the corresponding acyclic set. Then every connected component of   the lift $\pi^{-1}(C(x))$ has the diameter at most $d_0$, for every  $x \in \mathbf{B_h}$ (the diameter is computed with respect to the Euclidean metric on the infinite strip $\widetilde{A_h}$).
\end{claim}
\begin{proof}  Let $\mathbf{d}:\mathbf{B_h} \to \mathbb{R}$ be the function such that $\mathbf{d}(x)$ is the diameter of a connected component of $\pi^{-1}(C(x))$. This definition does not depend on the choice of the connected component of  $\pi^{-1}(C(x))$ because different components are images of each other by the deck group of translations (and they are isometries for the Euclidean metric on $\widetilde{A_h}$).

Moreover, from the upper-semicontinuity of the acyclic decomposition $\mathbf{A}(h)$, it follows that  $\mathbf{d}$ is an upper-semicontinuous function. Thus,  it achieves its  maximum on the  compact set $\mathbf{B}_h$. We let $d_0$ be the maximum value of the function $\mathbf{d}$.
\end{proof}

For  $x\in E$, It remains to show that $C(x)\subset E$. Since $C(x)$ is compact and connected, and $C(x)\subset \mathbf{B_h}$, it suffices to show that the rotation number of each $y \in C(x)$ is equal to $r$.

Fix  $\widetilde{f}:\widetilde{A_h}\to \widetilde{A_h}$, a lift of $f$, and $\widetilde{x} \in \pi^{-1}(x)$. Let $\widetilde{y} \in \pi^{-1}(y)$ be the point which belongs to the  same connected component of $\pi^{-1}(C(x))$ as $\widetilde{x}$. We denote this connected component of $\pi^{-1}(C(x))$ by $D$. Since $f$ permutes the
acyclic sets $C(z)$, for $z\in \mathbf{B_h}$, from the previous claim we conclude that the Euclidean distance between $\widetilde{f}^k(\widetilde{x})$ and $\widetilde{f}^k(\widetilde{y})$ is at most $d_0$, for any integer $k$. Therefore, the translation numbers $\rho(\widetilde{f},\widetilde{x},\widetilde{A})$ and $\rho(\widetilde{f},\widetilde{y},\widetilde{A})$ are equal (since $x\in E\subset E_r$ we already know that $\rho(\widetilde{f},\widetilde{x},\widetilde{A})$  exists). Thus, $y\in E$, and we are done.

\end{proof}

\vskip .3cm

\subsection{Further properties of connected components of $E_r$} In this subsection, we show that the closure of  each connected component of $E_r$ is a separator when $0<r<1$. Let $E$ be one connected component of $E_r$. Fix any characteristic annulus $A$. Denote by $\pi: \widetilde{A}\to A$ the universal cover and recall  the $x$-coordinate function $p_1: \widetilde{A}\to \mathbb{R}$, on the infinite strip $\widetilde{A}$. (The function $p_1$ is what we use to define translation numbers on $\widetilde{A}$.)

\begin{lem} \label{separatingL0}
The closed set $\overline{E}$ is a separator (as defined in Section 2). 
\end{lem}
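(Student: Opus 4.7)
The plan is to establish the three defining properties of a separator for $\overline{E}$: compactness, connectedness, and separation of the two ends of $A$.

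Compactness and connectedness are essentially immediate. Since $0<r<1$, Lemma \ref{gap}(2) gives $E\subset\mathbf{B}$, and $\mathbf{B}$ is compact, so $\overline{E}\subset \mathbf{B}$ is compact; connectedness of $\overline{E}$ follows from connectedness of $E$. Before proceeding to separation, I record the crucial preliminary fact that $\overline{E}$ is $f$-invariant: Proposition \ref{Johnson} (with $k=4$) gives $T_c^{-6}\in {\mathcal L\mathcal I}(c)$, so $f\in \mathcal{E}({\mathcal L\mathcal I}(c))$, and Lemma \ref{invariant} then yields $f(E)=E$, hence $f(\overline{E})=\overline{E}$.

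For separation I argue by contradiction. Suppose there is an embedded arc $\gamma\subset A$ connecting the two ends of $A$ and disjoint from $\overline{E}$. Choose a lift $\widetilde{\gamma}\subset \widetilde{A}$. Since $\gamma$ is embedded, the translates $\{T^k\widetilde{\gamma}\}_{k\in\mathbb{Z}}$ are pairwise disjoint, and they are disjoint from $\pi^{-1}(\overline{E})$; they partition $\widetilde{A}$ into open fundamental domains $U_k$ for $T$, each homeomorphic to a disc, with $\pi|_{U_0}\colon U_0\to A\setminus\gamma$ a homeomorphism. Consequently $\overline{E}_0:=(\pi|_{U_0})^{-1}(\overline{E})\subset U_0$ is compact and connected.

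Now pick a lift $\widetilde{f}$ of $f$. Since $\widetilde{f}$ preserves $\pi^{-1}(\overline{E})$, the set $\widetilde{f}(\overline{E}_0)$ is connected in $\widetilde{A}\setminus\bigcup_k T^k\widetilde{\gamma}$, so it must lie in a single $U_m$. Setting $\widetilde{g}:=T^{-m}\widetilde{f}$ (another lift of $f$), we have $\widetilde{g}(\overline{E}_0)\subset U_0$; combined with $\pi(\widetilde{g}(\overline{E}_0))=f(\overline{E})=\overline{E}$ and the injectivity of $\pi|_{U_0}$, this forces $\widetilde{g}(\overline{E}_0)=\overline{E}_0$. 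By induction the $\widetilde{g}$-orbit of any $\widetilde{x}\in \overline{E}_0$ remains in $\overline{E}_0$, which is bounded in the $x$-coordinate; hence $\rho(\widetilde{g},\widetilde{x},P)=0$. But $\widetilde{x}$ is a lift of some $x\in E$, so $\rho(\widetilde{g},\widetilde{x},P)\equiv r\pmod 1$ with $r\in(0,1)$, a contradiction. The main subtlety is choosing the correct lift $\widetilde{g}$ so that $\overline{E}_0$ is actually fixed setwise rather than merely carried into a neighboring translate; once this is secured, the bounded-orbit argument converts the hypothetical existence of $\gamma$ into a numerical obstruction via the rotation number.
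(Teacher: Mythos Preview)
Your proof is correct and takes a genuinely different route from the paper's.

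The paper writes $T_c^{-6}=h_k\cdots h_1$ as a product of simple BP maps and, starting from a point $x_1\in E$, builds a chain of acyclic sets $C_i\in\mathbf{A}(h_i)$ (each containing the successive image $x_{i}$ under the partial products of the $g_j=\mathcal{E}(h_j)$). Lifted to the strip, these link into a connected set $K_0\subset\pi^{-1}(E)$; since the translation number at $\widetilde{x}_1$ is nonzero, $K_0$ is unbounded in both $x$-directions and must cross any arc $\widetilde{\gamma}$ separating the strip.

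You instead extract from Lemma~\ref{invariant} only the single fact $f(\overline{E})=\overline{E}$ and argue dynamically: a hypothetical arc $\gamma$ disjoint from $\overline{E}$ produces a compact connected lift $\overline{E}_0$; any lift of $f$ carries it into a translate of itself, so after adjusting by a deck transformation one obtains a lift $\widetilde{g}$ with $\widetilde{g}(\overline{E}_0)=\overline{E}_0$, forcing translation number $0$ on $\overline{E}_0$ and contradicting $r\in(0,1)$. This is shorter and avoids the acyclic-set bookkeeping entirely. The trade-off is that the paper's explicit $K_0$ is reused verbatim in Lemma~\ref{rational}: when $x_1$ is periodic, $K_0$ becomes $T^m$-invariant for some $m$, so $\pi(K_0)\subset E$ is itself a compact separator. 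Your argument does not produce a separator \emph{inside} $E$, so with this approach Lemma~\ref{rational} would require a separate construction.
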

\begin{proof}
By Lemma \ref{Johnson}, the left Torelli group ${\mathcal L\mathcal I}(c)$ is generated by simple BP maps. Write $T_c^{-6}$ as the product of simple BP maps
$$
T_c^{-6}=h_k\cdots h_1.
$$
For simplicity, we let $g_i=\mathcal{E}(h_i)$. Then 
\begin{equation}\label{eq-ajmo-1}
f=g_k\cdots g_1.
\end{equation}

\vskip .3cm

Fir $x_1\in E$, and  let $C_1$ be the element of $\mathbf{A}(h_1)$ that contains $x_1$.  Inductively for  $i \in \mathbb{Z}$, we let $C_{i+1}$ be the element of $\mathbf{A}(h_{i+1})$ which contains $x_{i+1}$, where $x_{i+1}=g_i(x_i)$.  By Lemma \ref{invariant} we have $x_i\in E$. Moreover, from (\ref{eq-ajmo-1}) we find that 
\begin{equation}\label{eq-ajmo-2}
x_{i+k}=f(x_i),
\end{equation}
for every $i\in \mathbb{Z}$.

Now, we lift everything to the universal cover $\pi:\widetilde{A}\to A$. The corresponding lift of $C_i$ is also denoted by $C_i$, while the corresponding lift of the point $x_i$ is denoted by $\widetilde{x}_i$. Once we fix a lift $\widetilde{x}_1$ of $x$, the remaining lifts are uniquely determined. Moreover, there exists a unique lift 
$\widetilde{f}:\widetilde{A}\to \widetilde{A}$ such that $\widetilde{f}(\widetilde{x}_i)=\widetilde{x}_{i+k}$.

\begin{figure}[H]
\includegraphics[scale=0.4]{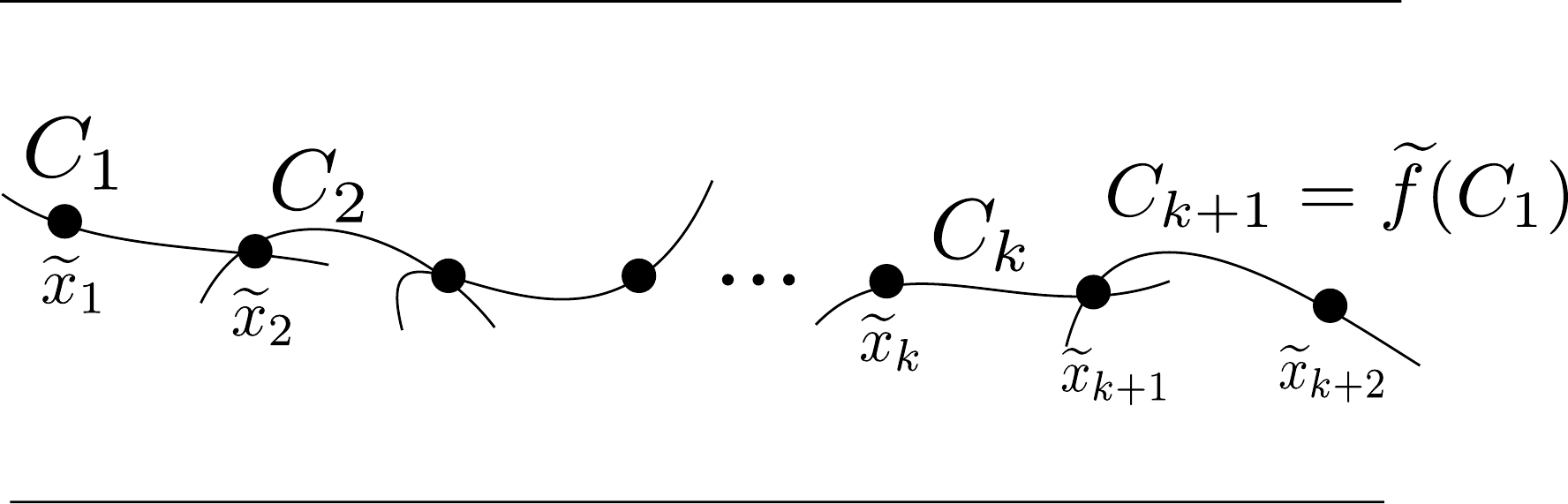}
\caption{}
\label{longline}
\end{figure}
The sequence of subsets $C_{i}$ satisfies the following properties as in Figure \ref{longline}: 

\vskip .3cm

\begin{itemize}
\item $C_i$ and $C_{i+1}$ contain a common point for every $i\in \mathbb{Z}$ (by definition);
\item $C_i$ is connected and $C_i\subset \pi^{-1}(E)$ (by Lemma \ref{invariant}).
\end{itemize}

\vskip .3cm

Define $$
C=C_1\cup...\cup C_k,
$$ 
and 
$$
K_0:=\bigcup_{n\in \mathbb{Z}}\widetilde{f}^n(C).
$$

\vskip .3cm

We now prove that $\overline{E}$ is a separator. Since $E$ is connected (and compactly contained in $A$), it follows that $\overline{E}$ is compact and connected in $A$. It remains to show it separates the two ends of $A$.  Let $\gamma$ denotes a simple closed arc  in $\overline{E}$ connecting the two ends of $A$ (we choose $\gamma$ so it  connects two accessible points of the two frontiers of $A\subset S_g$). 
It suffices to show that $\overline{E}$ intersects any such $\gamma$. In fact, we show a stronger statement that $E$ intersects any such $\gamma$.

We argue by contradiction, and assume that there is such a $\gamma$ which $E$ does not intersect.  Choose a  lift $\widetilde{\gamma}$ of $\gamma$. Then  $\widetilde{\gamma}$ divides $\widetilde{A}$ into two sides $\Omega_-$ and $\Omega_+$, such that  $p_1(\Omega_-)$ is bounded above and $p_1(\Omega_+)$ is bounded below (recall that $p_1$ is the $x$-coordinate map). Then $\pi^{-1}(E)$ does not intersect $\widetilde{\gamma}$. In the rest of the proof we show that $K_0$ intersects $\widetilde{\gamma}$, which is a contradiction.

Since the translation number $\rho\big(\widetilde{f},\widetilde{x}_1,\widetilde{A}\big)$ is not equal to zero,  we conclude that 
$$
\lim_{n\to +\infty} \, p_1\big(f^{n}(\widetilde{x}_1)\big)=+\infty,
$$
and 
$$
\lim_{n\to +\infty} \, p_1\big(f^{n}(\widetilde{x}_1)\big)=+\infty,
$$
Together with  (\ref{eq-ajmo-2}), this implies  that $K_0$ intersects both $\Omega_+$ and $\Omega_-$.  Define a function $H:\Omega_+\cup \Omega_-\to \mathbb{R}$ by letting $H(x)=-1$ for $x\in \Omega_-$ and $H(x)=1$ for $x\in \Omega_+$. Then $H$ is a continuous function on $\Omega_+\cup \Omega_-$. If we assume that $K_0$ does not intersect $\widetilde{\gamma}$, then $K_0\subset \Omega_+\cup \Omega_-$ and the restriction of $H$ to $K_0$ is continuous well.
However, $K_0$ is connected so $H(K_0)$ is a connected subset of $\mathbb{R}$. But this  $H(K_0)-\{0,1\}$, which is not connected. It follows that $K_0$ intersects $\widetilde{\gamma}$ and we are finished. 
\end{proof}

\vskip .3cm

Building on the construction from the previous proof we show that for a connected component $E$ that contains a periodic orbit  the following stronger property holds.

\begin{lem}\label{rational}
Let $x$ be a periodic orbit of $f$ such that $\rho(f,x,A)=p/q$ and $0<p/q<1$. Then, the connected component $E$ of $E_{p/q}$ which contains $x$, also contains a separator (as a subset).
\end{lem}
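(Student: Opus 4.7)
I would adapt the construction in the proof of Lemma \ref{separatingL0}, now exploiting the periodicity of $x$ to manufacture a set that is itself a compact connected essential subset of $E$, rather than merely to show $E$ meets every essential arc.

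Starting from a factorization $T_c^{-6} = h_k \cdots h_1$ into simple BP maps (Proposition \ref{Johnson}), set $g_i = \mathcal{E}(h_i)$ and build the sequence $x_1 = x$, with $C_i$ the element of $\mathbf{A}(h_i)$ containing $x_i$, and $x_{i+1} = g_i(x_i)$, extended periodically modulo $k$ so that $h_{i+k} = h_i$ for all $i \in \mathbb{Z}$. Lift to the universal cover $\pi : \widetilde{A} \to A$ of a characteristic annulus $A$: fix a lift $\widetilde{x}_1$ of $x_1$, the compatible lifts of the $C_i$ (still denoted $C_i$), and the unique lift $\widetilde{f}$ of $f$ with $\widetilde{f}(\widetilde{x}_i) = \widetilde{x}_{i+k}$. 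Set $C = C_1 \cup \cdots \cup C_k$ and $K_0 = \bigcup_{n \in \mathbb{Z}} \widetilde{f}^n(C)$. By Lemma \ref{invariant} each $C_i \subset \pi^{-1}(E)$; the $C_i$'s chain together via the common points $x_{i+1} \in C_i \cap C_{i+1}$, and consecutive $\widetilde{f}^n(C)$'s share $\widetilde{x}_{nk+k+1}$, so $K_0$ is connected. Since $\pi(K_0)$ is a connected subset of $E_{p/q}$ containing $x \in E$, we have $\pi(K_0) \subset E$.

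The new input is periodicity. Let $N$ be the period of $x$; since $f$ commutes with every $g_i$ (inherited from the commutativity of $T_c$ with each $h_i$ in $\mathrm{Mod}(S_g)$), we get $f^N(x_i) = x_i$, and hence $f^N$ preserves each $C_i$ as an element of $\mathbf{A}(h_i)$. Adjust $\widetilde{f}$ so that $\widetilde{f}^N(\widetilde{x}_1) = T^M(\widetilde{x}_1)$ with $M/N = p/q$, and in particular $M > 0$. A straightforward induction using the defining property that $\widetilde{x}_{j+1}$ is the unique lift of $x_{j+1}$ lying in the chosen lift of $C_j$ propagates this to $\widetilde{f}^N(\widetilde{x}_i) = T^M(\widetilde{x}_i)$ for every $i$. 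Since two lifts of the acyclic set $C_i$ that share a point must coincide, we conclude $\widetilde{f}^N(C_i) = T^M(C_i)$, hence $\widetilde{f}^N(C) = T^M(C)$. Therefore $K_0 = \bigcup_{j \in \mathbb{Z}} T^{jM}(C')$, where $C' = \bigcup_{n=0}^{N-1} \widetilde{f}^n(C)$ is a finite union of compact sets, and so $\pi(K_0) = \pi(C')$ is compact.

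It remains to verify $\pi(K_0)$ is essential, which I would do by the same argument as in Lemma \ref{separatingL0}. If some simple arc $\gamma$ joining the two frontiers of $A$ misses $\pi(K_0)$, its lift $\widetilde{\gamma}$ is a compact arc in $\widetilde{A}$ joining the two horizontal sides of the strip and disjoint from the connected set $K_0$, so $K_0$ is contained in a single component of $\widetilde{A} \setminus \widetilde{\gamma}$. The two components of $\widetilde{A} \setminus \widetilde{\gamma}$ are bounded above, respectively below, in $p_1$ by $\max p_1(\widetilde{\gamma})$ and $\min p_1(\widetilde{\gamma})$: any point of $\widetilde{A}$ with $p_1$ strictly beyond these bounds is joined to the corresponding end of $\widetilde{A}$ by a horizontal ray missing $\widetilde{\gamma}$. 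But $p_1(\widetilde{f}^{nN}(\widetilde{x}_1)) = p_1(\widetilde{x}_1) + nM$ takes values tending to both $+\infty$ and $-\infty$, contradiction. Hence $\pi(K_0)$ is a compact, connected, essential subset of $E$, i.e., a separator inside $E$.

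The main subtlety will be the lift-tracking in the second paragraph: the identification $\widetilde{f}^N(\widetilde{x}_i) = T^M(\widetilde{x}_i)$ for $i \ne 1$ requires a careful inductive comparison of $\widetilde{x}_{i+Nk}$ with $T^M(\widetilde{x}_i)$, using that $\widetilde{C}_{Nk+1} = T^M(\widetilde{C}_1)$ (forced by sharing $\widetilde{x}_{Nk+1} = T^M(\widetilde{x}_1)$) and then propagating the $T^M$-shift through the chain of overlapping lifts $\widetilde{C}_{Nk+1}, \widetilde{C}_{Nk+2}, \ldots$. Once this is in place, everything else closely parallels the proof of Lemma \ref{separatingL0}.
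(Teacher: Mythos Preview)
Your proposal is correct and follows the same route as the paper: reuse the chain $K_0$ from Lemma \ref{separatingL0}, then exploit periodicity of $x$ to show $K_0$ is invariant under some nontrivial deck translation $T^M$, so that $\pi(K_0)\subset E$ is compact (and separation was already established). The paper's proof is terser---it records $\widetilde{x}_{1+kl}=T^m(\widetilde{x}_1)$ and asserts $T^m$-invariance of $K_0$ in one line---whereas your lift-tracking via the commutation of $f$ with each $g_i$ is precisely what justifies that line; one small quibble is that you cannot freely ``adjust $\widetilde{f}$'' once the chain has fixed it, but all you actually need is $M\neq 0$, which follows since $M/N\equiv p/q\pmod 1$ with $0<p/q<1$.
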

\begin{proof}
By construction of the set $K_0$, we have
$$
\widetilde{f}(K_0)=K_0.
$$ 
Since $x=x_1$ is a periodic point for $f$, there exists an integer $l$ such that $f^{l}(x_1)=x_1$. Then by (\ref{eq-ajmo-2})  we have $x_{1+kl}=x_1$.
This implies that for some   integer $m$ the equality 
$$
\widetilde{x}_{1+kl}=T^m(\widetilde{x}_1),
$$
holds, where $T^m$ is the translation by $m$.

Thus, $K_0$ is invariant under $T^m$. This shows that $\pi(K_0)\subset E$ is compact. Furthermore $\pi(K_0)$ is connected since $K_0$ is connected, and we proved in the previous lemma that $\pi(K_0)$ separates the ends of $A$. Thus, $\pi(K_0)$ is a separator.
\end{proof}

\subsection{Finishing the proof}
Fix an irrational number $r\in (0,1)$. By Lemma \ref{gap}, we know that $E_r$ is not empty. Let $E$ be a connected component of $E_r$. 
By Lemma \ref{invariant}, we know that $E$ is invariant under ${\mathcal L\mathcal I}(c)$. By Lemma \ref{separatingL0}, we know that $\overline{E}$ is a separator. The annular completions $K(\overline{E})$ of $\overline{E}$ is also ${\mathcal L\mathcal I}(c)$-invariant since the definition is canonical. 
The following claim is at the heart  of the entire construction. 
\begin{claim}
Let $r_L$ and $r_R$ be the left and right prime ends rotation numbers of $f$ on $K(\overline{E})$. Then $r_L=r_R=r$. 
\end{claim}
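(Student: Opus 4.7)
The plan is to use Matsumoto's theorem (Theorem~\ref{Matsumoto}) to reduce the claim to showing that the rotation interval $\sigma(f, K(\overline{E}))$ is the singleton $\{r\}$. First I would verify $r \in \sigma(f, K(\overline{E}))$ by Birkhoff averaging. Since $f = \mathcal{E}(T_c^{-6}) \in \mathcal{E}({\mathcal L\mathcal I}(c))$ by Proposition~\ref{Johnson}, Lemma~\ref{invariant} makes $E$ (and hence $\overline{E} \subset K(\overline{E})$) $f$-invariant. For any $x \in E$, any weak-$*$ limit $\mu$ of the empirical measures $\mu_n = \tfrac{1}{n}\sum_{k=0}^{n-1}\delta_{f^k x}$ is an $f$-invariant probability measure supported on $K(\overline{E})$, and a telescoping computation with the displacement function $\phi$ (which lifts to $p_1\circ\widetilde{f}-p_1$) gives $\sigma(f,\mu) = \lim_n \tfrac{1}{n}(p_1(\widetilde{f}^n\widetilde{x})-p_1(\widetilde{x})) = r$, so $r \in \sigma(f, K(\overline{E}))$.

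Assume for contradiction that $\sigma(f, K(\overline{E})) \neq \{r\}$. Being a closed interval strictly containing the irrational $r \in (0,1)$, it contains three distinct rationals $r_1 < r_2 < r_3$ in $(0,1)\setminus\{r\}$. Theorem~\ref{PB} supplies periodic points $x_i \in K(\overline{E})$ with $\rho(f,x_i,A) = r_i$, and Lemma~\ref{rational} produces a separator $M_i \ni x_i$ contained in the connected component $F_i \subset E_{r_i}$ through $x_i$. The $M_i$ are pairwise disjoint (since the level sets $E_{r_i}$ are), so repeated application of Proposition~\ref{ordering} gives $M_1 < M_2 < M_3$ after relabeling. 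By Lemma~\ref{separatingL0}, $\overline{E}$ is itself a separator, so the annular complements $A_L(\overline{E})$ and $A_R(\overline{E})$ are well-defined and $K(\overline{E}) \cap (A_L(\overline{E})\cup A_R(\overline{E})) = \emptyset$.

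The heart of the argument is to locate $E$ relative to the middle separator $M_2$. Since $E$ is connected and disjoint from $M_2$ (different rotation numbers), it sits in a single connected component of $A\setminus M_2$, which is either $A_L(M_2)$, $A_R(M_2)$, or a simply connected bubble $B^{(2)}$. In the first case, openness of $A_L(M_2)$ forces $\overline{E}\subset A_L(M_2)\cup M_2$, so $A_R(M_2)\subset A\setminus\overline{E}$; as an open connected set containing the right end of $A$, it must lie in $A_R(\overline{E})$, whence $M_3 \subset A_R(M_2) \subset A_R(\overline{E})$ and $x_3 \in A_R(\overline{E})$, contradicting $x_3 \in K(\overline{E})$. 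The case $E \subset A_R(M_2)$ is symmetric and yields $x_1 \notin K(\overline{E})$. If $E \subset B^{(2)}$, then $\overline{E} \subset \overline{B^{(2)}} \subset B^{(2)}\cup M_2$, so both $A_L(M_2)$ and $A_R(M_2)$ lie in $A_L(\overline{E})$ and $A_R(\overline{E})$ respectively, and $M_1 \subset A_L(M_2) \subset A_L(\overline{E})$ again forces $x_1 \notin K(\overline{E})$. In every sub-case we reach a contradiction, so $\sigma(f,K(\overline{E})) = \{r\}$, and Matsumoto's theorem gives $r_L = r_R = r$.

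The main obstacle is the bubble sub-case: one is tempted to rule it out by invoking Lemma~\ref{separatingL0} on $\overline{E}$, but the closure of a simply connected bubble can itself wrap around the annulus and be a separator, so the bubble case has to be dispatched directly rather than excluded. Organizing the case analysis around the middle separator $M_2$ (rather than around $M_1$ or $M_3$) makes all three sub-cases essentially uniform, and reduces the remainder to bookkeeping with Proposition~\ref{ordering} and the definition of $K(\overline{E})$.
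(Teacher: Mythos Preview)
Your argument is correct and follows essentially the same strategy as the paper's proof: reduce to showing $\sigma(f,K(\overline{E}))=\{r\}$, assume otherwise, extract three periodic points with distinct rational rotation numbers via Theorem~\ref{PB}, upgrade them to separators via Lemma~\ref{rational}, order the separators, and derive a contradiction by locating $E$ relative to the middle one $M_2$.

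Two organizational differences worth noting. First, you explicitly verify $r\in\sigma(f,K(\overline{E}))$ by a Birkhoff averaging argument; the paper leaves this implicit (once the interval is shown to be a singleton $\{s\}$, the Franks--Le Calvez proposition forces every point of $K(\overline{E})$, in particular those of $E$, to have rotation number $s$, whence $s=r$). Your route is more self-contained; the paper's is shorter. Second, you split into three cases $E\subset A_L(M_2)$, $E\subset A_R(M_2)$, $E\subset B^{(2)}$, whereas the paper uses the dichotomy ``$E\cap A_R(M_2)=\emptyset$'' versus ``$E\subset A_R(M_2)$''. The paper's first branch covers both your $A_L(M_2)$ case and your bubble case simultaneously: once $E\cap A_R(M_2)=\emptyset$, openness of $A_R(M_2)$ gives $\overline{E}\cap A_R(M_2)=\emptyset$ directly, and the rest follows. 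So the ``main obstacle'' you flag is not really an obstacle if one organizes the dichotomy this way; your three-case split is correct but slightly less economical.

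One small point: you write ``Lemma~\ref{rational} produces a separator $M_i\ni x_i$''. The \emph{statement} of Lemma~\ref{rational} only asserts that the component contains a separator, not that $x_i$ lies on it. This does follow from the construction in its proof (the separator is $\pi(K_0)$ with $x=x_1\in C_1\subset K_0$), or alternatively you can avoid it entirely by arguing that the whole component $F_3$ is connected, disjoint from $M_2$, and contains $M_3\subset A_R(M_2)$, hence $x_3\in F_3\subset A_R(M_2)$.
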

\begin{proof}
We prove that the rotation interval $\sigma(f,K(\overline{E}))$ is a singleton $\{r\}$. Then by Theorem \ref{Matsumoto}, we know that $r_L=r_R=r$.

Since $K(\overline{E})$ is an annular continuum, and $f$ is area-preserving, we have Theorem \ref{PB} saying that every rational number in the translation interval $\sigma(f,K(\overline{E}))$ is realized by a periodic orbit of $f$. We argue by contradiction. Suppose $\sigma(f,K(\overline{E}))$ is not a singleton, there exist periodic points $x_1,x_2,x_3\in K(\overline{E})$ with three different rotation numbers  $r_i \in \sigma(f,K(\overline{E}))$. Denote by $M_i$ a separator contained the connected component of $E_{r_{i}}$ containing $x_i$ (such $M_i$ exists by Lemma \ref{rational}). Without loss of generality, we assume that $M_1<M_2<M_3$ by Proposition \ref{ordering}. 

Since $E$ consists of points with irrational rotation number $r$, we know that $E$ is disjoint from the separators $M_1,M_2$, and $M_3$. On the other hand, each $M_i$ is contained in $K(\overline{E})$. We show this yields a  contradiction.  We break the discussion into the following two cases. (Recall that for an annular continuum $K\subset A$, by $A_R$ and $A_L$ we denote the two annuli in the complement of $K$.)

\vskip .3cm
Since $E$ is disjoint from $M_2$ and $A_R(M_2)$ is a connected component of $A-M_2$, one of the following must happen.
\begin{itemize}

\item $E\cap A_R(M_2)=\emptyset$: We claim that $K(\overline{E}) \cap A_R(M_2)=\emptyset$ which contradicts $x_3\in A_R(M_2)\cap K(\overline{E})$. Since $E\cap A_R(M_2)=\emptyset$ and that $A_R(M_2)$ is open, we know that $\overline{E}\cap A_R(M_2)=\emptyset$. Since $A_R(M_2)$ is connected, disjoint from $\overline{E}$ and contains the left end of $A$, we know $A_R(M_2) \subset A_R(\overline{E})$  by Proposition \ref{ordering}. Therefore $A_R(M_2)\cap K(\overline{E})=\emptyset$.
 
\vskip .3cm

\item $E\subset A_R(M_2)$ which means $E\cap A_L(M_2)=\emptyset$: With the same argument above, we show that $K(\overline{E})\cap A_L(M_2)=\emptyset$ which contradicts $x_1\in A_L(M_2)\cap K(\overline{E})$.
\end{itemize}

\end{proof}

We conclude that ${\mathcal L\mathcal I}(c)$ acts on the left prime ends of $K(\overline{E})$, where the action of $f$ has irrational rotation number $r$. However, $f$ is a product of commutators in ${\mathcal L\mathcal I}(c)$ by Lemma \ref{Johnson}, the following lemma gives us a contradiction.

\begin{lem}[Centralizer of an irrational rotation]
If $\phi\in \Homeo_+(S^1)$ has an irrational rotation number, then $\phi$ cannot be written as a product of commutators in its centralizer.
\end{lem}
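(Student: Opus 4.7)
The plan is to exhibit a group homomorphism $\rho : Z(\phi) \to \mathbb{R}/\mathbb{Z}$ from the centralizer of $\phi$ in $\Homeo_+(S^1)$ which sends $\phi$ to its (nonzero) rotation number $\alpha \in \mathbb{R}/\mathbb{Z}$. Since $\mathbb{R}/\mathbb{Z}$ is abelian, any commutator in $Z(\phi)$ lands in the kernel of $\rho$, so $\phi$ cannot be written as a product of commutators in $Z(\phi)$.

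The first input I would invoke is the classical fact that a homeomorphism $\phi$ of $S^1$ with irrational rotation number admits a unique invariant Borel probability measure $\mu$ (the semi-conjugation to the rotation $R_\alpha$ is uniquely ergodic; in the Denjoy case $\mu$ is supported on the unique minimal Cantor set). Next, for any $\psi \in Z(\phi)$, the pushforward $\psi_*\mu$ is again $\phi$-invariant because $\psi$ commutes with $\phi$, so by uniqueness $\psi_*\mu = \mu$; hence every element of the centralizer preserves $\mu$.

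The second input is the integral formula for the translation number: if $\tilde g : \mathbb{R} \to \mathbb{R}$ is any lift of a $\mu$-preserving $g \in \Homeo_+(S^1)$, then $\tilde g - \mathrm{id}$ is $1$-periodic and descends to a continuous function on $S^1$, with
\[ \tau(\tilde g) \;=\; \int_{S^1} (\tilde g - \mathrm{id}) \, d\mu. \]
For two $\mu$-preserving elements with chosen lifts $\tilde g_1, \tilde g_2$ one has the identity
\[ \tilde g_1 \circ \tilde g_2 - \mathrm{id} \;=\; (\tilde g_1 - \mathrm{id}) \circ \tilde g_2 \;+\; (\tilde g_2 - \mathrm{id}), \]
and integrating against the $g_2$-invariant measure $\mu$ yields $\tau(\tilde g_1 \tilde g_2) = \tau(\tilde g_1) + \tau(\tilde g_2)$. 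Reducing mod $\mathbb{Z}$, the rotation number $\rho : Z(\phi) \to \mathbb{R}/\mathbb{Z}$ is therefore a homomorphism.

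To finish, $\rho(\phi) = \alpha$ is nonzero in $\mathbb{R}/\mathbb{Z}$ by hypothesis, while any product of commutators in $Z(\phi)$ maps to $0$. The main step requiring care is the upgrade of the rotation number from a mere quasimorphism on all of $\Homeo_+(S^1)$ to an honest homomorphism on $Z(\phi)$: the existence of a common invariant measure, forced by the uniqueness of $\mu$, is exactly what makes this possible, and it is the irrationality of $\alpha$ that delivers both the uniqueness of $\mu$ and the nontriviality of $\rho(\phi)$.
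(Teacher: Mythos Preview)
Your proof is correct and takes a genuinely different route from the paper's. The paper argues topologically: it collapses the gaps of the minimal set of $\phi$ to reduce to the case where $\phi$ is minimal, invokes Poincar\'e's theorem to conjugate $\phi$ to an actual irrational rotation, and then observes that the centralizer of an irrational rotation in $\Homeo_+(S^1)$ is exactly $SO(2)$, which is abelian. You instead argue measure-theoretically: the unique $\phi$-invariant measure $\mu$ is automatically preserved by every element of $Z(\phi)$, and on the subgroup of $\mu$-preserving homeomorphisms the rotation number is an honest homomorphism to $\mathbb{R}/\mathbb{Z}$ (via the integral formula and the cocycle identity you wrote down). Your approach avoids the case analysis and the collapsing construction, and directly exhibits the obstruction as a homomorphism to an abelian group; the paper's approach is more hands-on and makes the structure of $Z(\phi)$ completely explicit (after semi-conjugation it is literally $SO(2)$). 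The two are of course closely related: the collapsing map in the paper is essentially $x \mapsto \mu([x_0,x])$, so your invariant measure and their quotient circle encode the same semi-conjugacy to $R_\alpha$.
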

\begin{proof}
Since the rotation number of $\phi$ is irrational, $\phi$ has no periodic orbit. Let  $\mathbf{M}\subset S^1$ denote the minimal set of $\phi$  (in particular, the orbits of points in $\mathbf{M}$ are dense in $\mathbf{M}$ by \cite[Proposition 5.6]{Ghys}). Then either $\mathbf{M}$ is equal to $S^1$ or it is a Cantor set. In the latter case, the complement of $\mathbf{M}$ is a countable union of open interval. Collapsing these intervals to points we obtain the quotient space $S^1/\sim$ which is homeomorphic to $S^1$. Note that every homeomorphism which belongs to the centralizer of $\phi$ descends to a homeomorphism of the quotient.

Therefore, we may assume that the minimal set of $\phi$ is the circle.  Then by a theorem of Poincar\'e (see e.g., \cite[Theorem 5.9]{Ghys}), we know 
$\phi$  is conjugate to an actual irrational rotation. It suffices to show that an irrational rotation $\phi$ can not be written as  a product of commutators in its centralizer. However the centralizer of $\phi$ is the Abelian group $SO(2)$, thus any commutator in the centralizer of $\phi$ is the identity map of $S^1$.
The proof is complete.

\end{proof}

\section{Pseudo-Anosov analysis and proof of Theorem \ref{minimal} and \ref{minimal-1}}
Let $b\in S_2$ be a base-point. Let $\mathcal{Z}:S_2\to S_2$ be a pseudo-Anosov map on $S_2$ such that $b$ is one singularity. We can ``blow up" the base-point $b$ to a circle. Let $M$ be a surface of genus $g\ge 2$. We decompose $M$ into the union of a genus $2$ surface missing one disk $L$, a closed annulus $N$ and a genus $g-2$ surface missing one disk $R$. We construct a map $\mathcal{Z}_M:M\to M$ as the following: $\mathcal{Z}_M$ is the identity map on $R$, the blow up of $\mathcal{Z}$ on $L$ and any action on $N$. 

Let $\mathcal{P}: M\to S_2$ be the map that collapses points in $N\cup R$ to a point. Then let $\widetilde{M}$ be the cover of $M$ that is a pull back of the universal cover $\mathbb{H}^2\to S_2$ where $\mathbb{H}^2$ denotes the hyperbolic plane. We pick a lift of the homeomorphism $\mathcal{Z}$ to the universal cover $\widetilde{\mathcal{Z}}:\mathbb{H}^2\to \mathbb{H}^2$. There is a projection $\widetilde{\mathcal{P}}:\widetilde{M}\to \mathbb{H}^2$. Geometrically it is the pinching map that pinches each copy of lifts of $R$ on $\widetilde{M}$. The map $\mathcal{Z}_M$ can also be lifted to $\widetilde{M}$ as $\widetilde{\mathcal{Z}_M}:\widetilde{M}\to \widetilde{M}$.

Let $\mathcal{F}$ be a homeomorphism that is homotopic to $\mathcal{Z}_M$. Since $\mathcal{F}$ and $\mathcal{Z}_M$ are homotopic, we could lift $\mathcal{F}$ to $\widetilde{\mathcal{F}}: \widetilde{M}\to \widetilde{M}$ such that $\widetilde{\mathcal{F}}$ and $\widetilde{\mathcal{Z}_M}$ have bounded distance.
\begin{defn}
For $\widetilde{x}\in \widetilde{M}$ and $\widetilde{y}\in \mathbb{H}^2$, we say that $(\widetilde{\mathcal{F}},\widetilde{x})$ \emph{shadows} $(\widetilde{\mathcal{Z}},\widetilde{y})$ if there exists $C$ such that
\[
d_{\mathbb{H}^2}(\widetilde{\mathcal{P}}(\widetilde{\mathcal{F}}^n(\widetilde{x})),\widetilde{\mathcal{Z}}^n(\widetilde{y}))<C
\]
\end{defn}
We call that a sequence of points $\{x_n\}$ in $\mathbb{H}^2$ is a an $\widetilde{\mathcal{Z}}$ pseudo-orbit if the set $\{d_{\mathbb{H}^2}(\widetilde{\mathcal{Z}}(x_n),x_{n+1})\}$ is bounded. \begin{lem}
The sequence $\{\widetilde{\mathcal{P}}(\widetilde{\mathcal{F}}^n(x))\}$ is an $\widetilde{\mathcal{Z}}$-pseudo-orbit for every $x\in \widetilde{M}$.
\end{lem}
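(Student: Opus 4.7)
The plan is to chain together three ingredients: (i) the fact that $\widetilde{\mathcal{F}}$ sits within bounded distance of the lifted blow-up $\widetilde{\mathcal{Z}_M}$, (ii) the equivariance $\widetilde{\mathcal{P}}\circ \widetilde{\mathcal{Z}_M}=\widetilde{\mathcal{Z}}\circ\widetilde{\mathcal{P}}$ that follows from the choice of lifts, and (iii) a coarse-Lipschitz estimate for the pinching map $\widetilde{\mathcal{P}}$ coming from compactness of $M$. Concretely, set $y_n=\widetilde{\mathcal{F}}^n(x)$. The goal is to bound, independently of $n$, the quantity
\[
\Delta_n := d_{\mathbb{H}^2}\bigl(\widetilde{\mathcal{Z}}(\widetilde{\mathcal{P}}(y_n)),\,\widetilde{\mathcal{P}}(y_{n+1})\bigr).
\]

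First I would fix a Riemannian metric on $M$ and lift it to $\widetilde{M}$, and keep the hyperbolic metric on $\mathbb{H}^2=\widetilde{S_2}$. By hypothesis there is a constant $C_1$ with $d_{\widetilde{M}}(\widetilde{\mathcal{F}}(y),\widetilde{\mathcal{Z}_M}(y))\le C_1$ for every $y\in\widetilde{M}$; applied at $y=y_n$ this yields
\[
d_{\widetilde{M}}(y_{n+1},\,\widetilde{\mathcal{Z}_M}(y_n))\le C_1.
\]
Next, since $\mathcal{P}:M\to S_2$ is a continuous map between compact spaces and $\widetilde{\mathcal{P}}$ is its canonical lift with respect to the pullback cover $\widetilde{M}\to \mathbb{H}^2$, the map $\widetilde{\mathcal{P}}$ is coarsely Lipschitz: there is a function $\Phi:[0,\infty)\to[0,\infty)$ such that $d_{\widetilde{M}}(u,v)\le t$ implies $d_{\mathbb{H}^2}(\widetilde{\mathcal{P}}(u),\widetilde{\mathcal{P}}(v))\le \Phi(t)$, and in particular there is a constant $C_2=\Phi(C_1)$ with
\[
d_{\mathbb{H}^2}\bigl(\widetilde{\mathcal{P}}(y_{n+1}),\,\widetilde{\mathcal{P}}(\widetilde{\mathcal{Z}_M}(y_n))\bigr)\le C_2.
\]
(This is a standard consequence of extracting a compact fundamental domain in $\widetilde{M}$, using uniform continuity of $\mathcal{P}$ there, and then propagating the estimate by the deck transformations, which act by isometries on both sides.)

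Finally, the lifts $\widetilde{\mathcal{Z}_M}$ and $\widetilde{\mathcal{Z}}$ were chosen compatibly, so the identity $\mathcal{P}\circ\mathcal{Z}_M=\mathcal{Z}\circ\mathcal{P}$ on $M$ lifts to the exact equivariance
\[
\widetilde{\mathcal{P}}\circ\widetilde{\mathcal{Z}_M}=\widetilde{\mathcal{Z}}\circ\widetilde{\mathcal{P}}.
\]
Substituting this into the previous estimate gives
\[
\Delta_n=d_{\mathbb{H}^2}\bigl(\widetilde{\mathcal{P}}(\widetilde{\mathcal{Z}_M}(y_n)),\,\widetilde{\mathcal{P}}(y_{n+1})\bigr)\le C_2,
\]
uniformly in $n$, which is exactly the pseudo-orbit condition.

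The only non-routine point is the coarse-Lipschitz assertion for $\widetilde{\mathcal{P}}$, which I expect to be the main obstacle to write carefully: the pinching collapses entire lifts of $R$, so $\widetilde{\mathcal{P}}$ is not Lipschitz for the obvious metrics, but because $\mathcal{P}$ factors through a compact quotient and the two covering groups are intertwined isometrically, a standard fundamental-domain argument gives the bound $\Phi$. Everything else is bookkeeping.
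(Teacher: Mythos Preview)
Your proof is correct and matches the paper's approach exactly: the paper's proof is the single displayed inequality
\[
d_{\mathbb{H}^2}\bigl(\widetilde{\mathcal{Z}}(\widetilde{\mathcal{P}}(\widetilde{\mathcal{F}}^n(\widetilde{x}))),\widetilde{\mathcal{P}}(\widetilde{\mathcal{F}}^{n+1}(\widetilde{x}))\bigr)=d_{\mathbb{H}^2}\bigl(\widetilde{\mathcal{P}}(\widetilde{\mathcal{Z}_M}(\widetilde{\mathcal{F}}^n(\widetilde{x}))),\widetilde{\mathcal{P}}(\widetilde{\mathcal{F}}(\widetilde{\mathcal{F}}^{n}(\widetilde{x})))\bigr)\le C,
\]
which packages precisely your ingredients (ii) for the equality and (i)+(iii) for the bound. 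The coarse-Lipschitz property of $\widetilde{\mathcal{P}}$ that you single out is left implicit in the paper, so your write-up is in fact more detailed than the original.
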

\begin{proof}
This lemma follows from the following inequality:
\[
d_{\mathbb{H}^2}(\widetilde{\mathcal{Z}}(\widetilde{\mathcal{P}}(\widetilde{\mathcal{F}}^n(\widetilde{x}))),\widetilde{\mathcal{P}}(\widetilde{\mathcal{F}}^{n+1}(\widetilde{x})))=d_{\mathbb{H}^2}(\widetilde{\mathcal{P}}(\widetilde{\mathcal{Z}_M}(\widetilde{\mathcal{F}}^n(\widetilde{x}))),\widetilde{\mathcal{P}}(\widetilde{\mathcal{F}}(\widetilde{\mathcal{F}}^{n}(\widetilde{x}))))\le C.\qedhere
\]
\end{proof}
There is a difference between a pseudo-Anosov map and an Anosov map: for an Anosov homeomorphism, every pseudo-orbit has a uniformly bounded distance to a unique actual orbit; however for a pseudo-Anosov homeomorphism, we do not have such nice relation. The stable and unstable foliations have singularities and their leaf spaces $L^s$ and $L^u$ are more complicated. Namely, $L^s$ and $L^u$, and their metric completions $\overline{L^s}$ and $\overline{L^u}$ have the structure of $\mathbb{R}$-trees \cite{pA}. Then $\widetilde{\mathcal{Z}}$ induces a map $\widetilde{\mathcal{Z}}^s: \overline{L^s}\to \overline{L^s}$ that uniformly expands distance by a factor $\lambda>1$ and a map $\widetilde{\mathcal{Z}}^u: \overline{L^u}\to \overline{L^u}$ that uniformly contracts distance by a factor $1/\lambda$. There is an embedding of $\mathbb{H}^2$ in $\overline{L^s}\times \overline{L^u}$. Denote by $\overline{\mathcal{Z}}=(\widetilde{\mathcal{Z}}^s,\widetilde{\mathcal{Z}}^u)$. As discussed in \cite{pA}, every $\widetilde{\mathcal{Z}}$ pseudo-orbit has a uniformly bounded distance to a unique actual orbit of $\overline{\mathcal{Z}}$ on $\overline{L^s}\times \overline{L^u}$.

Using this property, there exists a unique map $\Theta: \widetilde{M}\to \overline{L^s}\times \overline{L^u}$ such that $\{\widetilde{\mathcal{P}}(\widetilde{\mathcal{F}}^n(x))\}$ is shadowed by the orbit of $\Theta(x)$. We have the following two theorems from \cite[Lemma 4.14]{Mar} and \cite[Theorem 1.2]{pA}.
\begin{thm}
Let $\mathcal{F},\mathcal{G}\in \Homeo(M)$ such that $\mathcal{F}, \mathcal{G}$ commute, $\mathcal{G}$ is homotopic to the identity map on the component $L$ and $\mathcal{F}$ is homotopic to $\mathcal{Z}_M$. We have that $\mathcal{G}$ preserves each connected component of $\Theta^{-1}(c,w)$ for $(c,w)\in \overline{L^s}\times \overline{L^u}$.
\end{thm}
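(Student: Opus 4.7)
The plan is to lift $\mathcal{G}$ compatibly with the already-chosen lift $\widetilde{\mathcal{F}}$, use the commutation relation to push $\widetilde{\mathcal{G}}$ through iterates of $\widetilde{\mathcal{F}}$, and invoke the uniqueness of the shadowing orbit defining $\Theta$ in order to identify $\Theta(\widetilde{\mathcal{G}}(x))$ with $\Theta(x)$.

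First, I would fix the lifts. Let $\widetilde{\mathcal{F}}$ be the lift used in the construction of $\Theta$, so that $\widetilde{\mathcal{P}}\circ\widetilde{\mathcal{F}}$ is at bounded distance from $\widetilde{\mathcal{Z}}\circ\widetilde{\mathcal{P}}$. Since $\mathcal{G}|_L$ is homotopic to the identity and $\mathcal{P}$ collapses $N\cup R$ to a point, the map $\mathcal{P}\circ \mathcal{G}$ is homotopic, as a map $M\to S_2$, to $\mathcal{P}$. Lifting this homotopy to $\mathbb{H}^2$ produces a preferred lift $\widetilde{\mathcal{G}}:\widetilde{M}\to\widetilde{M}$ together with a constant $C_0$ satisfying
\[
d_{\mathbb{H}^2}\bigl(\widetilde{\mathcal{P}}(\widetilde{\mathcal{G}}(y)),\widetilde{\mathcal{P}}(y)\bigr)<C_0,\qquad y\in\widetilde{M}.
\]
The relation $\mathcal{F}\mathcal{G}=\mathcal{G}\mathcal{F}$ then lifts to $\widetilde{\mathcal{F}}\widetilde{\mathcal{G}}=\widetilde{\mathcal{G}}\widetilde{\mathcal{F}}\tau$ for some deck transformation $\tau$. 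Because iterating produces unbounded $\mathbb{H}^2$-translation if $\tau$ has nontrivial image in $\pi_1(S_2)$, while $\widetilde{\mathcal{G}}$ has bounded $\widetilde{\mathcal{P}}$-displacement, comparison forces $\tau$ to act trivially on $\mathbb{H}^2$.

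With this in place, fix $x\in\widetilde{M}$ with $\Theta(x)=(c,w)$ and set $x'=\widetilde{\mathcal{G}}(x)$. Commutativity yields
\[
\widetilde{\mathcal{P}}(\widetilde{\mathcal{F}}^n(x'))=\widetilde{\mathcal{P}}(\widetilde{\mathcal{G}}(\widetilde{\mathcal{F}}^n(x))),
\]
which is within $C_0$ of $\widetilde{\mathcal{P}}(\widetilde{\mathcal{F}}^n(x))$ for every $n\in\mathbb{Z}$. Hence the orbit of $x'$ also shadows the $\overline{\mathcal{Z}}$-orbit of $(c,w)$, and uniqueness of shadowing in $\overline{L^s}\times\overline{L^u}$ (the input from \cite{pA} underlying the definition of $\Theta$) forces $\Theta(x')=(c,w)$. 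Thus $\widetilde{\mathcal{G}}$ preserves $\Theta^{-1}(c,w)$ setwise.

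To upgrade setwise preservation to component-wise preservation, I would use that the chosen lift $\widetilde{\mathcal{G}}$ is isotopic to the identity through homeomorphisms with uniformly bounded $\widetilde{\mathcal{P}}$-displacement, produced from the homotopy $\mathcal{G}|_L\simeq\mathrm{id}$, together with the observation that distinct connected components of $\Theta^{-1}(c,w)$ in $\widetilde{M}$ are related by deck transformations whose images in $\pi_1(S_2)$ act on $\mathbb{H}^2$ with unbounded translation. A bounded-displacement isotopy therefore cannot carry $x$ into a different component. I expect the main obstacle to be exactly this last step: making rigorous the separation between components of $\Theta^{-1}(c,w)$ in the presence of singular leaves of the stable and unstable foliations, and fully eliminating the deck-transformation ambiguity $\tau$ in the commutation relation, since near singularities of the pseudo-Anosov the naive bounded-displacement argument needs refinement.
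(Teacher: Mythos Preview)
The paper does not supply its own proof of this theorem; it simply records it as following from \cite[Lemma 4.14]{Mar} together with the pseudo-Anosov shadowing framework of \cite{pA}. So there is no in-paper argument to compare against directly.

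Your argument for \emph{setwise} preservation of the fibres $\Theta^{-1}(c,w)$ is the correct one and matches the standard global-shadowing approach: choose the lift $\widetilde{\mathcal{G}}$ with bounded $\widetilde{\mathcal{P}}$-displacement, verify that the lifted commutator is the trivial deck transformation (your bounded-displacement versus unbounded-translation dichotomy is exactly right here, since the deck group of $\widetilde{M}\to M$ is $\pi_1(S_2)$ acting by hyperbolic isometries), and then invoke uniqueness of the shadowing orbit to conclude $\Theta\circ\widetilde{\mathcal{G}}=\Theta$.

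The genuine gap is in your passage from setwise to componentwise preservation. Your claim that distinct connected components of $\Theta^{-1}(c,w)$ are related by deck transformations is not correct. A nontrivial deck transformation $\gamma$ carries $\Theta^{-1}(c,w)$ to $\Theta^{-1}\bigl(\gamma\cdot(c,w)\bigr)$, which is a \emph{different} fibre for generic $(c,w)$; and in any case all of $\widetilde{\mathcal{P}}\bigl(\Theta^{-1}(c,w)\bigr)$ lies in a single bounded region of $\mathbb{H}^2$ (within the uniform shadowing constant of the image of $(c,w)$), so distinct components of one fibre are close in $\mathbb{H}^2$-projection rather than separated by hyperbolic translation lengths. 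Consequently the bounded-displacement isotopy you sketch cannot, by itself, rule out $\widetilde{\mathcal{G}}$ permuting such nearby components. You have correctly located the obstacle, but the proposed mechanism does not work; closing the gap requires further input, for instance establishing that $\Theta$ is monotone (connected fibres), as in the Anosov/torus setting of \cite{Mar}, or carrying out a finer analysis of the fibre structure near singular leaves via the $\mathbb{R}$-tree machinery of \cite{pA}.
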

Let $\widetilde{\mathbf{S}}$ be the collection of all components of the sets $\Theta^{-1}(c,w)$. Set $\mathbf{S}=\pi_M(\widetilde{\mathbf{S}})$ where $\pi_M:\widetilde{M}\to M$ be the covering map. The following is \cite[Proposition 4.1]{Mar}.
\begin{prop}
The set $\mathbf{S}$ is a proper upper semi-continuous decomposition of $M$. Moreover, there exists a simple closed curve $\gamma$, which is homotopic to the boundary of  $L$ (the surface of genus $2$ minus a disc) such that if $p\in M$ belongs to the component of $M-\gamma$ that is homotopic to $L$, then the component of $\mathbf{S}$ that contains $p$ is acyclic.
\end{prop}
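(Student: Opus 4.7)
I will establish the two assertions of the proposition in turn, following the argument of [Mar, Proposition 4.1] adapted to the pseudo-Anosov setting.

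The first step is to prove that $\Theta:\widetilde{M}\to \overline{L^s}\times \overline{L^u}$ is continuous. This follows from the uniqueness of shadowing in the product of $\mathbb{R}$-trees: because $\widetilde{\mathcal{Z}}^s$ expands by $\lambda>1$ and $\widetilde{\mathcal{Z}}^u$ contracts by $1/\lambda$, two $\overline{\mathcal{Z}}$-orbits that both shadow the same pseudo-orbit must coincide, and the quantitative dependence on the pseudo-orbit forces continuity of the assignment $x\mapsto \Theta(x)$. With $\Theta$ continuous, the collection $\widetilde{\mathbf{S}}$ of connected components of its fibers is automatically a decomposition of $\widetilde{M}$ satisfying disjointness, covering, and the closure property. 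Upper semi-continuity follows from continuity of $\Theta$ and Hausdorffness of $\overline{L^s}\times \overline{L^u}$, and the non-separation axiom is obtained from the tree structure of the target: if some element of $\widetilde{\mathbf{S}}$ separated $\widetilde{M}$, two fibers near opposite sides could not be joined along $\Theta^{-1}$ of a short path in $\overline{L^s}\times\overline{L^u}$, which contradicts path-connectedness of the target. Descending to $M$ via $\pi_M$ uses equivariance of $\Theta$ under deck transformations, which is built into its construction.

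For the second assertion, I will take $\gamma\subset M$ to be a simple closed curve homotopic to $\partial L$, chosen so that it lies strictly in the interior of $L$ (away from the collar annulus $N$). For a point $p$ on the $L$-side of $M-\gamma$, let $\widetilde{S}$ be the component of the fiber $\Theta^{-1}(\Theta(\widetilde{p}))$ containing a lift $\widetilde{p}$. It suffices to show that $\widetilde{S}$ is compact in $\widetilde{M}$, which yields acyclicity of $S=\pi_M(\widetilde{S})$. The key estimate is this: if $\widetilde{q}\in\widetilde{S}$ is another point, then $\Theta(\widetilde{q})=\Theta(\widetilde{p})$, so the pseudo-orbits $\{\widetilde{\mathcal{P}}(\widetilde{\mathcal{F}}^n(\widetilde{p}))\}$ and $\{\widetilde{\mathcal{P}}(\widetilde{\mathcal{F}}^n(\widetilde{q}))\}$ are shadowed by the same $\overline{\mathcal{Z}}$-orbit, and hence stay uniformly close in $\mathbb{H}^2$ for all $n\in\mathbb{Z}$. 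Since $\gamma$ was chosen well inside $L$, the fibers of $\widetilde{\mathcal{P}}$ have uniformly bounded diameter in $\widetilde{M}$ over the $L$-side, and the expansive nature of $\widetilde{\mathcal{Z}}$ upgrades uniform closeness in $\mathbb{H}^2$ across all iterates to uniform closeness in $\widetilde{M}$ itself. Thus $\widetilde{S}$ has bounded diameter and is therefore compact.

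The main obstacle is the treatment of singularities of the invariant pseudo-Anosov foliations in the shadowing estimates. At singularities the leaf spaces $\overline{L^s}$ and $\overline{L^u}$ branch, the product is no longer locally Euclidean, and the classical Anosov shadowing argument does not apply verbatim. This is handled by working intrinsically with the $\mathbb{R}$-tree structure on $\overline{L^s}$ and $\overline{L^u}$ as in [pA], where uniqueness of shadowing and continuous dependence are established despite the branching. A secondary technical point is placing $\gamma$ so that the bounded-distance discrepancy between $\widetilde{\mathcal{F}}$ and $\widetilde{\mathcal{Z}_M}$ on lifts of $L$ does not spoil the expansivity-driven uniform closeness estimate; for this it is enough to push $\gamma$ a definite distance (depending only on the shadowing constant) into the $L$-side.
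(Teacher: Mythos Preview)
The paper does not give its own proof of this proposition: it is stated verbatim as \cite[Proposition 4.1]{Mar} and used as a black box. So there is no in-paper argument to compare your sketch against; your attempt is effectively a reconstruction of the cited result, transplanted to the pseudo-Anosov setting via \cite{pA}, which is indeed the intended route.

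That said, two places in your sketch are thinner than they should be. First, your non-separation argument (``two fibers near opposite sides could not be joined along $\Theta^{-1}$ of a short path in $\overline{L^s}\times\overline{L^u}$'') is more an appeal to intuition than an argument; you have not said why preimages of paths in the tree product are connected, which is exactly the issue. Second, and more seriously, in the acyclicity step you infer that $\widetilde p$ and $\widetilde q$ are close in $\widetilde M$ from closeness of $\widetilde{\mathcal P}(\widetilde p)$ and $\widetilde{\mathcal P}(\widetilde q)$ in $\mathbb{H}^2$. But $\widetilde{\mathcal P}$ collapses every lift of $N\cup R$ to a point, so its fibers are not uniformly small on $\widetilde M$; you have only assumed the \emph{basepoint} $\widetilde p$ lies on the $L$-side of $\gamma$, and have not ruled out that some $\widetilde q\in\widetilde S$ lies in a lift of $N\cup R$. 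The existence claim for $\gamma$ is precisely that one can choose it so that the \emph{entire} component $\widetilde S$ is trapped on the $L$-side, and ``push $\gamma$ a definite distance into $L$, depending on the shadowing constant'' does not by itself explain why that traps $\widetilde S$. The actual mechanism in \cite{Mar} is that the image $\Theta(\widetilde M)$ misses a neighborhood of the singular point of $\mathbb{H}^2\hookrightarrow \overline{L^s}\times\overline{L^u}$ coming from the blown-up singularity $b$, and $\gamma$ is produced from that; your sketch does not touch this.
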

Let $\mathcal{E}: {\mathcal I}(M)\to \Homeo_+(M)$ be a section of $p_g$. Using the above ingredients, we can prove the following lemma the same way as \cite[Theorem 4.1]{Mar} by the existence of pseudo-Anosov elements in ${\mathcal I}(S_2)$ (see, e.g., \cite[Corollary 14.3]{FM}).
\begin{thm}\label{bigacyclic}
Let $M$ be a surface of genus $g>2$ and $\alpha\subset M$ be a simple closed curve such that $\alpha$ separates $M$ into a genus $2$ surface minus a disc and a compact surface of genus $g-2$ with one end. For an element $h\in {\mathcal I}(M)$ that can be realized as a homeomorphism that is the identity inside the corresponding subsurface of $M$ that is homeomorphic to a genus $2$ surface minus a disc, there exists an admissible decomposition of $M$ for $\mathcal{E}(h)$ with the following property: there exists a simple closed curve $\beta$, homotopic to $\alpha$ such that if $p\in M$ belongs to the genus $2$ surface minus a disc (which is one of the two components obtained after removing $\beta$ from $M$), then the component of the decomposition that contains $p$ is acyclic. 
\end{thm}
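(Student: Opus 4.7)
The plan is to reduce to the shadowing setup built up earlier in this section by pairing $h$ with an appropriately chosen pseudo-Anosov element.

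First I would produce the pseudo-Anosov. By \cite[Corollary~14.3]{FM}, the Torelli group $\mathcal{I}(S_2)$ contains pseudo-Anosov elements; pick one such $\phi$ and let $b\in S_2$ be one of its singular points. Running the blow-up construction at the start of this section with these data yields a homeomorphism $\mathcal{Z}_M\in\Homeo_+(M)$ that is the identity on $R$, equals the blow-up of $\phi$ on $L$, and interpolates across the annulus $N$. Because $\phi$ acts trivially on $H_1(S_2)$ and $\mathcal{Z}_M$ fixes $R$ pointwise, a direct Mayer--Vietoris computation shows $[\mathcal{Z}_M]\in\mathcal{I}(M)$.

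Next I would arrange $h$ and $[\mathcal{Z}_M]$ to commute in the mapping class group. The given representative of $h$ is supported in $N\cup R$, and after an isotopy I may assume it is in fact the identity on a collar of $\partial L$ inside $N$. On the other side, the interpolation used in the blow-up can be chosen so that $\mathcal{Z}_M$ is the identity on a thinner annular neighborhood of $R$ inside $N$. Then the two representatives have disjoint supports, hence commute in $\Homeo_+(M)$, and so $h$ and $[\mathcal{Z}_M]$ commute in $\mathcal{I}(M)$. Because $\mathcal{E}$ is a homomorphism, $\mathcal{F}:=\mathcal{E}([\mathcal{Z}_M])$ and $\mathcal{G}:=\mathcal{E}(h)$ commute as well; by construction $\mathcal{F}$ is homotopic to $\mathcal{Z}_M$ and $\mathcal{G}$ is homotopic to the identity on $L$, so the hypotheses of the commuting theorem are in place.

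Now I would invoke the structural results already established. That theorem yields that $\mathcal{G}$ preserves every connected component of $\Theta^{-1}(c,w)$. Taking $\mathbf{S}=\pi_M(\widetilde{\mathbf{S}})$ as in the subsequent proposition produces an upper semi-continuous decomposition of $M$ that is preserved by $\mathcal{E}(h)$, so the first axiom of admissibility holds. That proposition also provides a simple closed curve $\gamma$ homotopic to $\partial L$, hence homotopic to $\alpha$, with the property that every element of $\mathbf{S}$ meeting the component of $M-\gamma$ homotopic to $L$ is acyclic; taking $\beta:=\gamma$ gives the curve required by the statement. The density axiom of admissibility is immediate on the $L$-side, since every nearby element is itself acyclic, and on the $R$-side it is verified exactly as in the parallel step of \cite[Theorem~4.1]{Mar}.

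The main obstacle is the commuting step: the blow-up rotates $\partial L$ nontrivially, so $\mathcal{Z}_M$ is not literally supported off $\partial L$, and one must be careful to realize $h$ and $[\mathcal{Z}_M]$ by actual homeomorphisms with disjoint support in $N$ before appealing to the homomorphism $\mathcal{E}$. Once this is arranged, the rest of the argument is a direct transcription of \cite[Theorem~4.1]{Mar}, substituting the pseudo-Anosov shadowing map $\Theta$ for the Anosov one used there.
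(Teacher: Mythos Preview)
Your proposal is correct and follows essentially the same route the paper indicates. The paper does not give a standalone proof of this theorem; it says only that the argument is ``the same way as \cite[Theorem~4.1]{Mar} by the existence of pseudo-Anosov elements in ${\mathcal I}(S_2)$,'' and your sketch is precisely a faithful unpacking of that sentence: choose a Torelli pseudo-Anosov on $S_2$, blow it up to obtain $\mathcal{Z}_M\in\mathcal{I}(M)$, observe that $[\mathcal{Z}_M]$ and $h$ commute because they admit representatives with disjoint support across the separating annulus, and then feed $\mathcal{F}=\mathcal{E}([\mathcal{Z}_M])$ and $\mathcal{G}=\mathcal{E}(h)$ into the shadowing machinery (the commuting theorem and the subsequent proposition) to produce $\mathbf{S}$ and the curve $\beta=\gamma$. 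One small point worth making explicit: to blow up at $b$ you need $b$ to be \emph{fixed} by the pseudo-Anosov, so you may have to pass to a power of $\phi$ first (a power of a Torelli pseudo-Anosov is still Torelli pseudo-Anosov), but this does not affect the rest of the argument.
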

We now use the above to proof Theorem \ref{minimal} and \ref{minimal-1}.
\begin{proof}[Proof of Theorem \ref{minimal} and \ref{minimal-1}]
For simplicity, we prove the existence of $\mathbf{M}_1(c,h)$ only. The proof of others are similar. Define  
\[H=\langle \mathcal{E}(T_c^{-6}),\mathcal{E}(h)\rangle<\Homeo(M)\] for $h=T_aT_b^{-1}$. The other cases can be proved the same way. Let $\mathbf{S}(H)$ be the minimal decomposition of $H$ and $\mathbf{A}(H)$ be the union of acyclic components of $\mathbf{S}(H)$, which is a subsurface by Proposition \ref{subsurface}. Let $\alpha$ be the curve as in the following figure.
\begin{figure}[H]
 \labellist 
  \small\hair 2pt
     \pinlabel $a$ at 80 80
          \pinlabel $b$ at 80 30
                    \pinlabel $c$ at 200 50
                          \pinlabel $\alpha$ at 180 100
      \endlabellist
     \centerline{ \mbox{
 \includegraphics[scale=0.5]{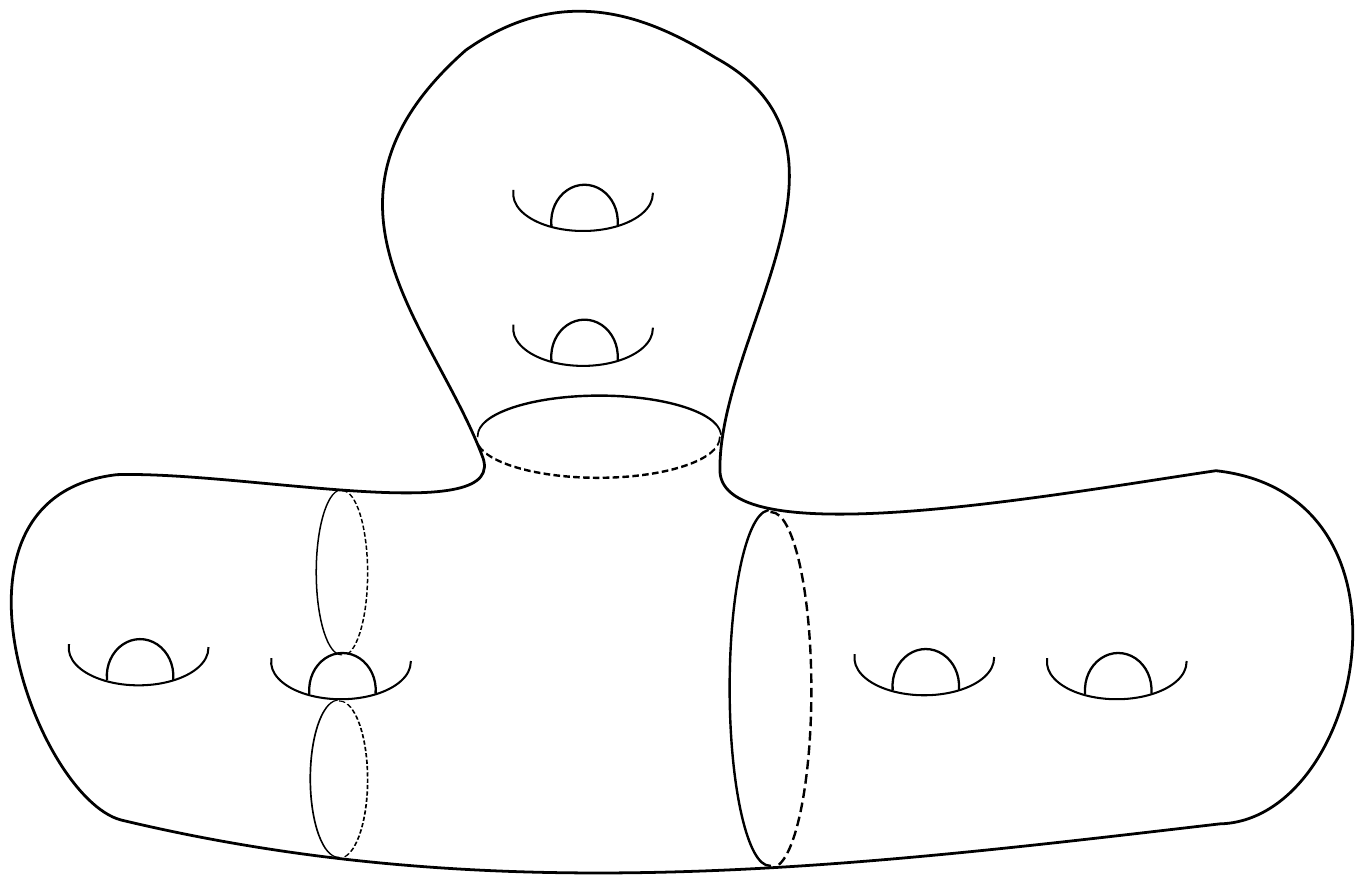}}}
 \caption{}
\end{figure}
By Theorem \ref{bigacyclic}, there exists a simple closed curve $\beta$ homotopic to $\alpha\subset M$ such that there exists a component $W$ of $\mathbf{A}(H)$ that contains the component of $M-\beta$ that is homotopic to the genus $2$ surface minus a disc. Since $H$ is not homotopic to the identity map, we know that $W\neq M$. This implies that $W$ has at least one end. We plan to prove that $W$ has exactly three ends and they are homotopic to $a,b,c$ respectively. 

Let $\beta_n$ be a nested sequence that determines one end $E$ of $W$. Firstly, we claim that $\beta_n$ cannot intersect $a,b,c$ (as isotopy classes of simple closed curves). If $\beta_n$ intersects $a$ geometrically, then $\mathcal{E}(h)(\beta_n)$ intersects $\beta_n$. This contradicts the fact that $\mathcal{E}(h)(\beta_n)\subset W$. If $\beta_n$ is not homotopic to $a,b$ or $c$, then there exists a separating curve $\delta$ such that $\delta$ intersects $\beta_n$ and $\delta$ does not intersect $a,b,c$. Then since $\mathcal{E}(T_\delta)(\beta_n)$ intersect $\beta$, it has to intersect $E$. However since $\mathcal{E}(T_\delta)$ commutes with $H$, we know that $\mathcal{E}(T_\delta)$ permutes components of $\mathbf{A}(H)$. This contradicts the fact that $\mathcal{E}(T_\delta)(\beta_n)$ intersect $\beta$ and that $\beta$ is a nested sequence that determines one end of $W$.

We now need to show that $a,b,c$ are all homotopic to some end of $W$. We prove this by contradiction. If the frontier of $W$ does not contain one end homotopic to $a$, then $W$ contains at most two ends, homotopic to $b,c$. Let $p_W: W\to W/\sim$ be the Moore map for components of $\mathbf{S}(H)$ in $W$. Let $A_b$ be an open annulus that is bounded by the end of $W/\sim$ homotopic to $b$ and a simple closed curve homotopic to $b$. We define $A_c$ similarly. Then $U=p_W^{-1}(A_b)\cup (M-W)\cup p_W^{-1}(A_c)$ is an open set. We define a new upper semi continuous decomposition $\mathbf{S}'$ that consists of elements of $\mathbf{S}(H)$ in $M-U$ and points in $U$. Let $p: M\to M/\sim $ be the Moore map for $\mathbf{S}'$. Therefore $H$ is semi-conjugate to a new action $H'$ that is the identity on $M-U$. This contradicts the fact that $\mathcal{E}(h)$ is homotopic to $T_aT_b^{-1}$ in $M-U$, which is not homotopic to identity. The proof that $b,c$ are also homotopic to ends of $W$ is similar.
\end{proof}

    	\bibliography{torelli}{}
	\bibliographystyle{alpha}

\end{document}